\documentclass[12pt,letterpaper]{article}

\usepackage{amsmath,amssymb,amsthm,mathtools}
\usepackage[margin=0.8in,a4paper]{geometry}    
\usepackage{cite} 
\usepackage{caption}
\usepackage{float}
\usepackage{tikz}
\usepackage{authblk}
\usepackage[colorlinks=true,linkcolor=blue,citecolor=blue,urlcolor=blue]{hyperref}
\usepackage{parskip}
\usepackage{enumitem}
 \usepackage{booktabs}
\usepackage{csquotes}
 \usepackage{lmodern}
\usepackage{orcidlink}
\usepackage{xcolor}
\usepackage{dsfont}
\usepackage{amscd}

\usetikzlibrary{decorations.pathreplacing,calc}

\theoremstyle{plain}
\newtheorem{theorem}{Theorem}
\newtheorem{corollary}{Corollary}
\newtheorem{lemma}{Lemma}
\newtheorem{proposition}{Proposition}

\theoremstyle{definition}
\newtheorem{definition}{Definition}

\newtheorem{problem}{Problem}

\theoremstyle{remark}
\newtheorem{remark}{Remark}

\DeclareRobustCommand{\seqnum}[1]{%
  \ifmmode
    \text{\href{https://oeis.org/#1}{\textcolor{blue}{{\normalfont\ttfamily #1}}}}%
  \else
    \href{https://oeis.org/#1}{\textcolor{blue}{{\normalfont\ttfamily #1}}}%
  \fi
}

\title{\textbf{Rotating-Memory Fibonacci Numbers \\and Periodic Tilings\thanks{\textcopyright W. Abdelaidoum, E.-M. Mehiri, H. Belbachir 2026}}}
 
\author[1]{Walid Abdelaidoum\,\orcidlink{0000-0001-6510-3236}%
\thanks{E-mail: \url{walidsd308@gmail.com}}}

\author[2]{El-Mehdi Mehiri\,\orcidlink{0000-0002-7164-3658}%
\thanks{Corresponding author. E-mails:
\url{elmehdi.mehiri@emse.fr} or
\url{mehiri314@gmail.com}}}

\author[1]{Hac\`ene Belbachir\,\orcidlink{0000-0001-8540-3033}%
\thanks{E-mail: \url{hbelbachir@usthb.dz}}}

\affil[1]{University of Science and Technology Houari Boumediene,
Faculty of Mathematics, RECITS Laboratory, P.O. Box 32, El-Alia,
16111 Bab-Ezzouar, Algiers, Algeria;}

\affil[2]{Mines Saint-\'Etienne, CMP,
Department of Manufacturing Sciences and Logistics,
F-13120 Gardanne, France}

\date{\today}

\begin{document}
\maketitle

\begin{abstract}
\noindent We introduce and study the \emph{rotating-memory Fibonacci numbers}, a periodic
variable-order analogue of the Fibonacci sequence in which the number of
preceding terms used in the recurrence changes cyclically with the index.
Despite this varying memory, the resulting sequences exhibit a remarkably
rigid structure. We derive closed forms, rational generating functions,
arithmetic properties, and exact growth behavior, and show that the
sequence decomposes naturally into geometric subsequences. We also develop
combinatorial interpretations in terms of periodically constrained
tilings, and restricted compositions, including
bijective explanations for the multiplicative structure of the sequence.
In addition, the first two nonclassical periods admit natural
geometry-driven realizations: the period-$2$ sequence arises from
monomer--dimer tilings of a triangular chain, while the period-$3$
sequence is related to tilings of a double hexagon strip by single and
double hexagons. These connections provide geometric interpretations of
the rotating recurrence in which the periodic behavior is induced by the
underlying structures themselves, and suggest a broader problem of
constructing analogous models for higher periods.
\end{abstract}

\medskip

\noindent\textbf{Keywords:}
Fibonacci numbers; periodic recurrence; rotating memory;
variable-order recurrence; monomer--dimer tilings; triangular chains;
honeycomb tilings; restricted compositions.

\medskip

\noindent\textbf{2020 MSC:}
Primary 11B39; Secondary 11B37, 05A15, 05A19, 05B45, 05C70.

\section{Introduction}
\label{sec:introduction}

The Fibonacci sequence is one of the most extensively studied integer
sequences, and its simple second-order recurrence has inspired numerous
generalizations in number theory and combinatorics. One classical direction
is to increase the fixed order of the recurrence, replacing the sum of the
two preceding terms by the sum of a prescribed number of preceding terms.
Higher-order Fibonacci sequences have been studied for many decades; an
early systematic treatment, including a matrix formulation, was given by
Miles \cite{Miles1960}. These sequences and their variants continue to
provide a natural setting in which recurrence relations, generating
functions, matrix methods, and combinatorial enumeration interact.

Tilings provide one of the most useful combinatorial interpretations of
Fibonacci-type recurrences. The familiar interpretation of Fibonacci
numbers by tilings with monominoes and dominoes extends naturally to many
generalized recurrence sequences; see, for example, Benjamin and Quinn
\cite{BenjaminQuinn2003}. Benjamin and Heberle
\cite{BenjaminHeberle2014} developed tiling arguments for higher-order
$r$-Fibonacci numbers, while Benjamin, Derks, and Quinn
\cite{BenjaminDerksQuinn2011} showed more generally how linear
recurrences can be represented and studied through weighted tiling
models.  

Most familiar Fibonacci generalizations retain a recurrence of fixed
order. A different possibility is to allow the amount of memory itself to
vary with the index. This direction was considered by Emerson
\cite{Emerson2006}, who studied variable order Fibonacci-type recurrences
in which the $n$th term is obtained by summing a variable number of
preceding terms. His framework allows considerable freedom in the choice
of the order function and leads to a broad range of possible growth
behaviors. The present work focuses instead on a   structured
subclass in which the memory length varies periodically and consecutively.

More precisely, we consider a family in which, for a fixed period $k$, the
memory length rotates through $2,3,\ldots,k+1$ and then returns to $2$. We call the resulting sequences
\emph{Rotating-Memory Fibonacci numbers}, or RM-Fibonacci numbers. The
periodicity of the memory rule makes this family sufficiently flexible to
differ substantially from fixed-order Fibonacci sequences, while at the
same time imposing enough structure to permit an explicit analysis. One of the main themes of the paper is that a periodically varying
memory length can conceal a remarkably simple multiplicative structure.

\sloppy The combinatorial viewpoint plays an equally important role. We introduce
a one-dimensional tiling model in which the admissible length of a tile
depends periodically on the position at which the tile ends. This
endpoint-dependent rule translates the rotating recurrence directly into a
last tile decomposition. The same model can also be expressed in terms of
restricted compositions, where the admissibility of a part depends on the
partial sum at which it terminates. These interpretations
make it possible to understand some of the algebraic behavior of
RM-Fibonacci numbers bijectively rather than only through recurrence
manipulations.

% A further motivation comes from the connection between recurrence
% sequences and tilings on the hexagonal lattice. Jin and Dresden
% \cite{JinDresden2022} used hexagonal double strip tilings to obtain
% Tetranacci identities. More recently, Do\v{s}li\'c and Podrug
% \cite{DoslicPodrug2024} developed tiling models on honeycomb strips that
% lead to higher-order Fibonacci-type sequences, including Tribonacci,
% Padovan, and Narayana's cow numbers. Within the RM-Fibonacci family, the
% period-$3$ case leads, after an index shift, to  sequence \seqnum{A354541} in  \emph{The On-Line Encyclopedia of Integer Sequences} \cite{OEIS}, which counts tilings of a double hexagon strip by
% single and double hexagons. This coincidence provides two quite different
% combinatorial realizations of the same integer sequence and motivates the
% search for a direct connection between the corresponding tiling models.

 Beyond these general position-dependent models, the first two
nonclassical periods admit natural geometric realizations in which the
periodicity is induced by the underlying structure itself. For period
$2$, the RM-Fibonacci sequence is, up to the initial shift recorded in
Table~\ref{tab:RM-initial-values}, sequence \seqnum{A038754} in  \emph{The On-Line Encyclopedia of Integer Sequences} \cite{OEIS}. We show
that its terms enumerate monomer--dimer tilings of a triangular chain
when one boundary vertex is prescribed to be a monomer.  

A related phenomenon occurs for period $3$. Hexagonal tilings have
previously been used to realize several Fibonacci-type sequences. Jin
and Dresden \cite{JinDresden2022} used hexagonal double-strip tilings to
obtain Tetranacci identities, while Do\v{s}li\'c and Podrug
\cite{DoslicPodrug2024} developed tiling models on honeycomb strips
leading to Tribonacci, Padovan, Narayana's cow, and related sequences.
In our setting, the shifted period-$3$ RM-Fibonacci sequence coincides
with \seqnum{A354541}, which enumerates tilings of a double hexagon strip
by single and double hexagons. Together, the triangular-chain and
double-hexagon models suggest that rotating memory may admit natural
geometric realizations beyond the explicitly position-dependent tiling
construction.
 
The remainder of the paper is organized as follows. Section~\ref{sec:definition} introduces the RM-Fibonacci numbers and illustrates the construction for several small periods. Section~\ref{sec:collapse} develops the structural properties of the recurrence and establishes the period-collapse phenomenon. Section~\ref{sec:closed-forms} derives closed forms, arithmetic properties, generating functions, and growth results. Section~\ref{sec:tilings} develops the combinatorial interpretations in terms of endpoint-periodic tilings, and restricted compositions,  together with bijective explanations of the underlying multiplicative structure.  Section~\ref{sec:triangular-period2} gives a geometric realization of the period-$2$ sequence in
terms of monomer--dimer tilings of a triangular chain. Section~\ref{sec:hexagonal} focuses on the period-$3$ case and its connection with tilings of a double hexagon strip. Finally, Section~\ref{sec:conclusion} summarizes the main findings and discusses open problems and directions for further research.

 \section{Rotating-Memory Fibonacci Numbers}
\label{sec:definition}

We begin with the basic definition. Throughout the paper, if an index is
negative, the corresponding sequence term is understood to be zero.

\begin{definition}\label{def:RMF}
Let $k\geq1$. The \emph{Rotating-memory Fibonacci sequence of period $k$}
is the sequence $\bigl(R_n^{(k)}\bigr)_{n\geq0}$ defined by
\begin{equation}
    R_0^{(k)}=0,\qquad R_1^{(k)}=1,\qquad \forall n\geq2: R_n^{(k)}
=
\sum_{j=1}^{\,2+(n\bmod k)}
R_{n-j}^{(k)},\label{eq:RM-definition}
\end{equation} 
where $R_m^{(k)}=0$ for all $m<0$.   
\end{definition}

Thus the memory length is the periodic function $\ell_k(n)=2+(n\bmod k)$, whose values, according to the residue class of $n$, are $2,3,\ldots,k+1$.

For $k=1$, we recover the classical Fibonacci sequence, since
$n\bmod 1=0$ for every $n$. Hence
\[
R_n^{(1)}
=
R_{n-1}^{(1)}+R_{n-2}^{(1)},
\qquad n\geq2,
\]
with $R_0^{(1)}=0$ and  $R_1^{(1)}=1$.

For $k=2$, the memory length alternates between $2$ and $3$. Therefore
\[
R_n^{(2)}
=
\begin{cases}
R_{n-1}^{(2)}+R_{n-2}^{(2)},
& n\equiv0\pmod2,\\[1mm]
R_{n-1}^{(2)}+R_{n-2}^{(2)}+R_{n-3}^{(2)},
& n\equiv1\pmod2,
\end{cases}
\]
with $R_0^{(2)}=0$, $R_1^{(2)}=1$, and $R_m^{(2)}=0$ for $m<0$.

For $k=3$, the memory length rotates through $2$, $3$, and $4$. Hence
\[
R_n^{(3)}
=
\begin{cases}
R_{n-1}^{(3)}+R_{n-2}^{(3)},
& n\equiv0\pmod3,\\[1mm]
R_{n-1}^{(3)}+R_{n-2}^{(3)}+R_{n-3}^{(3)},
& n\equiv1\pmod3,\\[1mm]
R_{n-1}^{(3)}+R_{n-2}^{(3)}+R_{n-3}^{(3)}+R_{n-4}^{(3)},
& n\equiv2\pmod3,
\end{cases}
\]
with $R_0^{(3)}=0$, $R_1^{(3)}=1$,  and $R_m^{(3)}=0$ for $m<0$.

Initial values of the RM-Fibonacci sequences for the first few periods and their OEIS correspondences are given in Table~\ref{tab:RM-initial-values}.
\begin{table}[H]
\centering

\begin{tabular}{c|l|l}
\toprule
$k$ & $R_0,R_1,\ldots,R_{16}$& OEIS\\
\midrule
1 &
$0,1,1,2,3,5,8,13,21,34,55,89,144,233,377,610,987$& \seqnum{A000045}$(n)$ \\
2 &
$0,1,1,2,3,6,9,18,27,54,81,162,243,486,729,1458,2187$&\seqnum{A038754}$(n-2)$ \\
3 &
$0,1,1,2,4,8,12,24,48,72,144,288,432,864,1728,2592,5184$&\seqnum{A354541}$(n-1)$ \\
4 &
$0,1,1,2,3,6,12,24,36,72,144,288,432,864,1728,3456,5184$&- \\
5 &
$0,1,1,2,4,6,12,24,48,96,144,288,576,1152,2304,3456,6912$&- \\
\bottomrule
\end{tabular}
\caption{Initial values of the RM-Fibonacci sequences for periods
$k=1,\ldots,5$.}
\label{tab:RM-initial-values}
\end{table}
Although the rows look rather different near the origin, the structure
after one complete period is remarkably rigid.

We first determine the initial transient.

\begin{lemma}
\label{lem:initial}
For $k\geq2$, the initial terms of the RM-Fibonacci sequence satisfy
\[
R_n^{(k)}
=
\begin{cases}
1, & k=2,\ n=2,\\[1mm]
2^{n-2}, & k\geq3,\ 2\leq n\leq k-1,\\[1mm]
2, & k=3,\ n=k,\\[1mm]
3\cdot2^{k-4}, & k\geq4,\ n=k.
\end{cases}
\]
\end{lemma}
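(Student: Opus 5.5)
The plan is to compute the terms directly from Definition~\ref{def:RMF}, using that for $2\leq n\leq k-1$ we have $n \bmod k = n$. Hence the memory length is $\ell_k(n)=n+2$, which exceeds $n$. The recurrence then sums \emph{all} earlier terms, and the negative-index terms vanish by convention:
\[
R_n^{(k)}=\sum_{j=0}^{n-1}R_j^{(k)},\qquad 2\leq n\leq k-1 .
\]
Since $R_0^{(k)}=0$ and $R_1^{(k)}=1$, this gives $R_2^{(k)}=1$. For $3\leq n\leq k-1$, subtracting the identity at $n-1$ (which is also in range) yields $R_n^{(k)}=2R_{n-1}^{(k)}$. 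A simple induction then gives $R_n^{(k)}=2^{n-2}$ for $k\geq3$ and $2\leq n\leq k-1$. Equivalently, $\sum_{j=0}^{n-1}R_j^{(k)}=2^{n-2}$ for these $n$, i.e.\ the partial sum $S_m:=\sum_{j=0}^{m}R_j^{(k)}$ equals $2^{m-1}$ for $1\leq m\leq k-1$.

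For the boundary cases, note that for $k=2$ and $n=2$ the index is $n\bmod 2=0$, so the memory is $2$. Thus $R_2^{(2)}=R_1+R_0=1$, which is the first line.

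For $n=k$ with $k\geq3$, we have $k\bmod k=0$, so the memory drops to $2$ and
\[
R_k^{(k)}=R_{k-1}^{(k)}+R_{k-2}^{(k)}.
\]
When $k=3$ this is $R_2+R_1=1+1=2$. The point is that $R_1=1$ is not of the form $2^{1-2}$, so this case must be treated separately. When $k\geq4$, both $k-1$ and $k-2$ lie in $[2,k-1]$, so the first part gives
\[
R_k^{(k)}=2^{k-3}+2^{k-4}=3\cdot2^{k-4}.
\]

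The only delicate step is bookkeeping the ranges. First, the telescoping step needs $n-1\geq2$, so the doubling starts at $n=3$ with $R_2=1$ as the base case. Second, the formula $2^{k-4}$ for $R_{k-2}$ requires $k-2\geq2$; this is exactly why $k=3$ is split off. Finally, we check that $\ell_k(n)\geq n$ holds throughout $2\leq n\leq k-1$, so that no term with $j\leq n$ is omitted from the sum.
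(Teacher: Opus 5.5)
Your proposal is correct and follows the paper's proof. Both show that for $2\le n\le k-1$ the memory $n+2$ covers every earlier term, so $R_n^{(k)}=\sum_{i=0}^{n-1}R_i^{(k)}$, and both then apply the order-$2$ reset at $n=k$, treating $k=2,3$ separately. Your subtraction of consecutive identities, which gives $R_n^{(k)}=2R_{n-1}^{(k)}$, and your range bookkeeping simply spell out the step the paper calls an ``immediate induction.''
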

\begin{proof}
Let $2\leq n\leq k-1$. Since $n\bmod k=n$, the memory length at $n$ is
$n+2$. All terms with negative indices vanish, so
\[
R_n^{(k)}
=
\sum_{j=1}^{n}R_{n-j}^{(k)}
=
\sum_{i=0}^{n-1}R_i^{(k)}.
\]
Since $R_0^{(k)}=0$ and $R_1^{(k)}=1$, it follows successively that $R_2^{(k)}=1$, 
$R_3^{(k)}=2$, 
$R_4^{(k)}=4$, and an immediate induction gives $R_n^{(k)}=2^{n-2}$ for $2\leq n\leq k-1$.

At $n=k$ the cycle resets, so the recurrence has order $2$:
\[
R_k^{(k)}
=
R_{k-1}^{(k)}+R_{k-2}^{(k)}.
\]
For $k\geq4$, the formula just proved gives
\[
R_k^{(k)}
=
2^{k-3}+2^{k-4}
=
3\cdot2^{k-4}.
\]
The cases $k=2$ and $k=3$ follow directly from the initial values.
\end{proof}

The next section shows that after this short initial phase no complicated
high-order analysis is needed.

\section{Collapse of the Rotating Recurrence}
\label{sec:collapse}

The main structural fact is that consecutive terms are related by simple
multiplicative factors. We distinguish between a \emph{reset position},
meaning an index divisible by $k$, and all other positions.

\begin{lemma} 
\label{lem:doubling}
Let $k\geq2$, and let $n\geq3$ satisfy $n\not\equiv0\pmod{k}$. Then $ R_n^{(k)}=2R_{n-1}^{(k)}$. 
\end{lemma}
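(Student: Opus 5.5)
The plan is to compare the recurrences at two consecutive indices. Write $\ell(n)=2+(n\bmod k)$ for the memory length. If $n\not\equiv0\pmod k$, then $n\bmod k=(n-1)\bmod k+1$, so $\ell(n)=\ell(n-1)+1$. Hence the window of terms summed at $n$ is exactly the window summed at $n-1$, shifted by one and extended by one term:
\[
R_n^{(k)}=\sum_{j=1}^{\ell(n-1)+1}R_{n-j}^{(k)}
=R_{n-1}^{(k)}+\sum_{j=1}^{\ell(n-1)}R_{n-1-j}^{(k)}.
\]

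If the recurrence \eqref{eq:RM-definition} holds at index $n-1$, the last sum equals $R_{n-1}^{(k)}$, and so $R_n^{(k)}=2R_{n-1}^{(k)}$. Terms with negative indices vanish by convention, which is consistent with how the recurrence is stated, so nothing changes when the window reaches below $0$.

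The only point to check is that the recurrence is actually valid at $n-1$. By Definition~\ref{def:RMF} this requires $n-1\geq2$, which is exactly the hypothesis $n\geq3$; this is why the lemma starts at $n=3$. At $n=2$ the identity can fail. For example, $R_2^{(k)}=1\neq 2R_1^{(k)}$, because $R_1^{(k)}=1$ is an initial value rather than the sum $R_0^{(k)}+R_{-1}^{(k)}+\cdots=0$.

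I expect no real obstacle. The whole argument is the index bookkeeping $\ell(n)=\ell(n-1)+1$ off reset positions, together with this check on the range of validity of the recurrence. As a sanity check, Lemma~\ref{lem:initial} already gives the doubling $2^{n-2}$ for $3\le n\le k-1$, which agrees with the statement.
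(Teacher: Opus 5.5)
Your proposal is correct and follows essentially the same route as the paper: off reset positions the memory length at $n$ is one more than at $n-1$, so the window summed at $n$ is $R_{n-1}^{(k)}$ plus exactly the window summed at $n-1$, which gives $2R_{n-1}^{(k)}$. Your explicit check that the recurrence must hold at $n-1\geq2$ (with $R_2^{(k)}=1\neq 2R_1^{(k)}$ as the counterexample at $n=2$) is correct; the paper uses this fact only implicitly.
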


\begin{proof}
Write $r=n\bmod k$ with $1\leq r\leq k-1$. By Definition~\ref{def:RMF},
\[
R_n^{(k)}
=
\sum_{j=1}^{r+2}R_{n-j}^{(k)}.
\]
Since $(n-1)\bmod k=r-1$,  the recurrence at $n-1$ has memory length $r+1$, and therefore
\[
R_{n-1}^{(k)}
=
\sum_{j=1}^{r+1}R_{n-1-j}^{(k)}
=
\sum_{j=2}^{r+2}R_{n-j}^{(k)}.
\]
Consequently,
\[
R_n^{(k)}
=
R_{n-1}^{(k)}
+
\sum_{j=2}^{r+2}R_{n-j}^{(k)}
=
2R_{n-1}^{(k)}.\qedhere
\]
\end{proof}

Thus every time the memory increases by one, the sequence simply doubles.
Only the reset from memory $k+1$ back to memory $2$ behaves differently.

\begin{lemma} 
\label{lem:reset}
Let $k\geq2$, and let $n\geq4$ satisfy $n\equiv0\pmod{k}$. Then $R_n^{(k)}
=
\frac32 R_{n-1}^{(k)}$. 
\end{lemma}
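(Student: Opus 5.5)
At a reset position $n\equiv0\pmod k$ the memory length is $2$, so by Definition~\ref{def:RMF} we have $R_n^{(k)}=R_{n-1}^{(k)}+R_{n-2}^{(k)}$. The plan is therefore to show that $R_{n-2}^{(k)}=\tfrac12R_{n-1}^{(k)}$, i.e.\ that $R_{n-1}^{(k)}=2R_{n-2}^{(k)}$. Once this is established, the statement follows at once from
\[
R_n^{(k)}=R_{n-1}^{(k)}+\tfrac12R_{n-1}^{(k)}=\tfrac32R_{n-1}^{(k)}.
\]

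For $k\geq2$ the index $n-1$ satisfies $n-1\equiv k-1\not\equiv0\pmod k$. Hence Lemma~\ref{lem:doubling} applies to $n-1$ whenever $n-1\geq3$, that is, whenever $n\geq4$, which is exactly the hypothesis. It gives $R_{n-1}^{(k)}=2R_{n-2}^{(k)}$, and the proof is complete.

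The only point needing care is that Lemma~\ref{lem:doubling} is legitimately applicable at $n-1$: it requires $n-1\geq3$ and $n-1$ not a multiple of $k$. Both conditions hold, since $n\geq4$ and $k\geq2$. The lower bound $n\geq4$ is exactly what rules out the transient cases such as $k=2$, $n=2$, where $R_1^{(k)}=R_2^{(k)}=1$ and the doubling fails. Table~\ref{tab:RM-initial-values} can serve as a sanity check: for $k=2$, $n=4$ one has $R_4^{(2)}=3=\tfrac32\cdot2$, and for $k=3$, $n=6$ one has $12=\tfrac32\cdot8$.
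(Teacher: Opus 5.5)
Your proof is correct and takes the same approach as the paper: you use the order-$2$ recurrence at the reset index, then apply Lemma~\ref{lem:doubling} at $n-1\equiv k-1\pmod{k}$ to get $R_{n-2}^{(k)}=\tfrac12R_{n-1}^{(k)}$. Your explicit check that $n-1\geq3$, which makes the doubling lemma applicable, is a detail the paper leaves implicit.
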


\begin{proof}
At a reset position the memory length is $2$, and hence
\[
R_n^{(k)}
=
R_{n-1}^{(k)}+R_{n-2}^{(k)}.
\]
Since $n-1\equiv k-1\pmod{k}$, Lemma~\ref{lem:doubling} gives $R_{n-1}^{(k)}
=
2R_{n-2}^{(k)}$.  Therefore $R_{n-2}^{(k)}
=
\frac12R_{n-1}^{(k)}$,  and hence
\[
R_n^{(k)}
=
R_{n-1}^{(k)}
+
\frac12R_{n-1}^{(k)}
=
\frac32R_{n-1}^{(k)}.\qedhere
\]
\end{proof}

We can now state the main structural theorem, called the \emph{period-collapse theorem}.

\begin{theorem}
\label{thm:period-collapse}
Let $k\geq2$. Then, for every $n\geq k$,
\begin{equation}
\label{eq:period-collapse}
R_{n+k}^{(k)}
=
3\cdot2^{k-2}R_n^{(k)}.
\end{equation}
\end{theorem}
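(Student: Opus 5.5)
The plan is to telescope the one-step ratios supplied by Lemma~\ref{lem:doubling} and Lemma~\ref{lem:reset} across one full period. Fix $n\geq k$ and write
\[
\frac{R_{n+k}^{(k)}}{R_n^{(k)}}=\prod_{m=n+1}^{n+k}\frac{R_m^{(k)}}{R_{m-1}^{(k)}},
\]
which is legitimate once we know $R_m^{(k)}>0$ for $m\geq1$. Positivity follows by a trivial induction from $R_1^{(k)}=1$, since every recurrence includes the term $R_{m-1}^{(k)}$ and all terms are nonnegative. The $k$ consecutive indices $m=n+1,\dots,n+k$ form a complete residue system modulo $k$. Exactly one of them is a reset position, and it contributes the factor $\tfrac32$ by Lemma~\ref{lem:reset}. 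The remaining $k-1$ indices are non-reset and contribute a factor $2$ each by Lemma~\ref{lem:doubling}. The product is therefore $\tfrac32\cdot 2^{k-1}=3\cdot2^{k-2}$, which is the identity \eqref{eq:period-collapse}. To avoid division, I would present this as a chain of equalities $R_m^{(k)}=c_mR_{m-1}^{(k)}$ and multiply them, so that positivity is not even needed.

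The main care point is checking the hypotheses of the lemmas for every $m\in\{n+1,\dots,n+k\}$. Since $n\geq k\geq2$, every such $m$ satisfies $m\geq k+1\geq3$, so Lemma~\ref{lem:doubling} applies at all non-reset indices. The reset index $m$ is a multiple of $k$ with $m\geq k+1$, hence $m\geq2k\geq4$, so Lemma~\ref{lem:reset} applies as well.

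The proof of Lemma~\ref{lem:reset} itself invokes Lemma~\ref{lem:doubling} at $m-1$. That step needs $m-1\geq3$, which holds because $m\geq4$, and it needs $m-1\not\equiv0\pmod k$, which holds for $k\geq2$. So no boundary case slips through. This hypothesis check is the only real obstacle. The restriction $n\geq k$ in the theorem is exactly what keeps the window clear of the initial transient of Lemma~\ref{lem:initial}. For example, with $k=2$ and $n=1$ the window would contain $m=2$, where the doubling fails, and indeed $R_3^{(2)}=2\neq 3R_1^{(2)}$.
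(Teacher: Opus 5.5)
Your proof is correct and is the paper's argument: multiply the $k$ one-step factors from Lemma~\ref{lem:doubling} and Lemma~\ref{lem:reset} over the window $n+1,\dots,n+k$, which contains exactly one multiple of $k$. Your explicit check that every index in the window is at least $k+1\geq 3$ and that the reset index is at least $2k\geq 4$, together with your division-free chain of equalities, makes precise what the paper compresses into ``all the indices involved are beyond the exceptional initial positions.'' One small wording slip: in your $k=2$, $n=1$ example the index $m=2$ is a reset position, so what fails there is the factor $\tfrac32$, not a doubling.
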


\begin{proof}
Consider the $k$ successive transitions $R_n^{(k)}
\longrightarrow
R_{n+1}^{(k)}
\longrightarrow\cdots\longrightarrow
R_{n+k}^{(k)}$. Among the indices $n+1,n+2,\ldots,n+k$ there is exactly one multiple of $k$. At that index,
Lemma~\ref{lem:reset} contributes the factor $3/2$.
Each of the other $k-1$ transitions occurs between resets, and
Lemma~\ref{lem:doubling} contributes a factor $2$.

Since $n\geq k$, all the indices involved are beyond the exceptional
initial positions. Therefore
\[
\frac{R_{n+k}^{(k)}}{R_n^{(k)}}
=
\frac32\,2^{k-1}
=
3\cdot2^{k-2}
\Longrightarrow   R_{n+k}^{(k)}  
=
3\cdot2^{k-2}R_n^{(k)}.\qedhere
\] 
\end{proof}

It is useful to introduce the period multiplier $c_k:=3\cdot2^{k-2}$. Theorem~\ref{thm:period-collapse} then reads simply
\begin{equation}
R_{n+k}^{(k)}=c_kR_n^{(k)}.\label{eq:constant-coefficient-recurrence}
\end{equation}
This result shows that the variable order recurrence
\eqref{eq:RM-definition}, whose largest memory length is $k+1$, eventually
reduces to the sparse constant coefficient recurrence
\[
R_{n+k}^{(k)}-c_kR_n^{(k)}=0.
\]
In particular, each residue class modulo $k$ evolves geometrically.

\begin{corollary} 
\label{cor:geometric-subsequences}
Let $k\geq2$, $0\leq r\leq k-1$, and $q\geq1$. Then
\begin{equation}
\label{eq:block-form}
R_{qk+r}^{(k)}
=
2^r R_k^{(k)}c_k^{q-1}.
\end{equation}
\end{corollary}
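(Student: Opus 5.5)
The plan is a double induction: first on $q$ using Theorem~\ref{thm:period-collapse}, and then, inside each block, on $r$ using Lemma~\ref{lem:doubling}. Everything reduces to the block $q=1$, i.e.\ the indices $k,k+1,\ldots,2k-1$.

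\emph{Step 1: the base block $q=1$.} I will show $R_{k+r}^{(k)}=2^rR_k^{(k)}$ for $0\le r\le k-1$ by induction on $r$. The case $r=0$ is trivial. For $1\le r\le k-1$, the index $n=k+r$ satisfies $n\not\equiv0\pmod k$. It also satisfies $n\ge k+1\ge3$, since $k\ge2$. So Lemma~\ref{lem:doubling} applies and gives $R_{k+r}^{(k)}=2R_{k+r-1}^{(k)}=2\cdot2^{r-1}R_k^{(k)}$.

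\emph{Step 2: induction on $q$.} Assume \eqref{eq:block-form} holds for some $q\ge1$ and every $r$. Set $n=qk+r$, which satisfies $n\ge k$. Then Theorem~\ref{thm:period-collapse} gives
\[
R_{(q+1)k+r}^{(k)}=c_kR_{qk+r}^{(k)}=2^rR_k^{(k)}c_k^{q}.
\]
This is the statement for $q+1$.

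\emph{Main obstacle.} The only delicate point is checking that the hypotheses of the earlier lemmas hold at the boundary. Lemma~\ref{lem:doubling} needs $n\ge3$, which is guaranteed because $k+r\ge k+1\ge3$. Theorem~\ref{thm:period-collapse} needs $n\ge k$, which holds because $q\ge1$. Since every index used has $q\ge1$, we never touch the transient region $n<k$. That transient is the reason the formula is normalized by $R_k^{(k)}$ rather than by $R_0^{(k)}$.
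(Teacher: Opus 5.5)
Your proposal is correct and follows the paper's own proof. Lemma~\ref{lem:doubling} gives $R_{k+r}^{(k)}=2^rR_k^{(k)}$ on the first full block, and repeated application of Theorem~\ref{thm:period-collapse} extends this to all $q\geq1$; your explicit checks of the hypotheses $n\geq3$ and $n\geq k$ spell out what the paper's short argument leaves implicit.
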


\begin{proof}
By Lemma~\ref{lem:doubling}, we have  $R_{k+r}^{(k)}
=
2^rR_k^{(k)}$ with  $0\leq r\leq k-1$. Repeated application of Theorem~\ref{thm:period-collapse} now gives
\[
R_{qk+r}^{(k)}
=
c_k^{q-1}R_{k+r}^{(k)}
=
2^rR_k^{(k)}c_k^{q-1}.\qedhere
\]
\end{proof}

Thus the entire sequence beyond its initial transient is obtained by
repeating the same block shape $1,2,4,\ldots,2^{k-1}$, while multiplying successive blocks by $c_k$.

For $k\geq4$, Lemma~\ref{lem:initial} gives an especially simple formula.

\begin{corollary}
\label{cor:explicit-qkr}
Let $k\geq4$, $q\geq1$, and $0\leq r\leq k-1$. Then
\begin{equation}\label{eq:explicit-qkr}
R_{qk+r}^{(k)}
=
3^q\,2^{(k-2)q+r-2}
\end{equation}
\end{corollary}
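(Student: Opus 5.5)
The plan is to substitute the explicit value of $R_k^{(k)}$ from Lemma~\ref{lem:initial} into the block form of Corollary~\ref{cor:geometric-subsequences}. Since $k\geq4$, Lemma~\ref{lem:initial} gives $R_k^{(k)}=3\cdot2^{k-4}$. Corollary~\ref{cor:geometric-subsequences} applies for all $q\geq1$ and $0\leq r\leq k-1$, and it gives
\[
R_{qk+r}^{(k)}=2^r\cdot 3\cdot 2^{k-4}\cdot c_k^{q-1},
\qquad c_k=3\cdot2^{k-2}.
\]

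It then remains to collect powers of $3$ and $2$. The power of $3$ is $3\cdot3^{q-1}=3^q$. The exponent of $2$ is
\[
r+(k-4)+(k-2)(q-1)=(k-2)q+r-2,
\]
because $(k-4)-(k-2)=-2$. This gives $R_{qk+r}^{(k)}=3^q\,2^{(k-2)q+r-2}$, which is exactly \eqref{eq:explicit-qkr}.

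There is no real obstacle here. The only point to check is that the hypotheses of the earlier results hold on the whole range. The case $k\geq4$ is precisely the range in which Lemma~\ref{lem:initial} gives $R_k^{(k)}=3\cdot2^{k-4}$, rather than the exceptional values for $k=2,3$. Corollary~\ref{cor:geometric-subsequences} already covers $q=1$ (where $c_k^{0}=1$) together with every residue $r$. As a sanity check, take $k=4$, $q=1$, $r=0$: the formula gives $3\cdot2^{0}=3$, matching $R_4^{(4)}=3$ in Table~\ref{tab:RM-initial-values}.
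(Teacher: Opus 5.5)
Your proposal is correct and follows the paper's own argument: substitute $R_k^{(k)}=3\cdot2^{k-4}$ from Lemma~\ref{lem:initial} and $c_k=3\cdot2^{k-2}$ into the block form \eqref{eq:block-form} of Corollary~\ref{cor:geometric-subsequences}, then collect powers. Your exponent computation $r+(k-4)+(k-2)(q-1)=(k-2)q+r-2$ and your check that the hypotheses cover all $q\geq1$, $0\leq r\leq k-1$ are both right.
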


\begin{proof}
Using $R_k^{(k)}=3\cdot2^{k-4}$ and $c_k=3\cdot2^{k-2}$ in \eqref{eq:block-form}, we obtain
\[
R_{qk+r}^{(k)}
=
2^r
\left(3\cdot2^{k-4}\right)
\left(3\cdot2^{k-2}\right)^{q-1}.
\]
Collecting powers of $2$ and $3$ gives~\eqref{eq:explicit-qkr}. 
\end{proof}

Consequently, for every $k\ge4$, all terms after the initial transient
are $3$-smooth; that is, their only prime divisors are $2$ and $3$.

The period-collapse theorem also determines the consecutive term ratios exactly.

\begin{corollary} 
\label{cor:ratio-pattern}
Let $k\geq2$. For every $n\geq4$,
\[
\frac{R_n^{(k)}}{R_{n-1}^{(k)}}
=
\begin{cases}
\dfrac{3}{2}, & n\equiv0\pmod{k},\\[2mm]
2, & n\not\equiv0\pmod{k}.
\end{cases}
\]
Consequently, the sequence of consecutive term ratios is periodic with
period $k$ from $n=4$ onward.
\end{corollary}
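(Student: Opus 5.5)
The plan is to read the corollary off Lemmas~\ref{lem:doubling} and~\ref{lem:reset}, splitting $n\geq4$ by whether $n$ is a reset position. Before dividing, I will check that $R_{n-1}^{(k)}\neq0$ for $n\geq4$. Indeed $R_1^{(k)}=R_2^{(k)}=1$ and $R_3^{(k)}=2$ for every $k\geq2$ (Table~\ref{tab:RM-initial-values} and Lemma~\ref{lem:initial}). Each of the two lemmas expresses $R_n^{(k)}$ as a positive multiple of $R_{n-1}^{(k)}$, so a simple induction shows $R_m^{(k)}>0$ for all $m\geq1$.

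For $n\geq4$ with $n\not\equiv0\pmod{k}$, we have in particular $n\geq3$, so Lemma~\ref{lem:doubling} applies directly and gives $R_n^{(k)}/R_{n-1}^{(k)}=2$. For $n\geq4$ with $n\equiv0\pmod{k}$, Lemma~\ref{lem:reset} applies verbatim and gives the ratio $3/2$. The one point to verify is that Lemma~\ref{lem:reset} internally uses Lemma~\ref{lem:doubling} at the index $n-1$, which requires $n-1\geq3$ and $n-1\not\equiv0\pmod{k}$. Both hold: $n\geq4$ gives the first, and $k\geq2$ with $k\mid n$ gives the second. This bookkeeping is really the only potential obstacle, and it is exactly why the threshold is $n\geq4$ rather than $n\geq3$.

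Periodicity then follows at once. The ratio depends only on whether $k\mid n$, and that condition is invariant under $n\mapsto n+k$. Hence $n\mapsto R_n^{(k)}/R_{n-1}^{(k)}$ is periodic with period $k$ on $n\geq4$.

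As a sanity check, I would compare with Table~\ref{tab:RM-initial-values}. For $k=3$ the ratios from $n=4$ onward should be $2,2,3/2,2,2,3/2,\ldots$, and indeed $4/2=2$, $8/4=2$, $12/8=3/2$. The check $n=3$ is consistent with excluding small $n$ only partially: there Lemma~\ref{lem:reset} does not apply, and for $k=3$ one gets $R_3/R_2=2\neq3/2$. This confirms the threshold.
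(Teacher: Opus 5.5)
Your proof is correct and takes the route the paper implicitly intends: the paper gives no separate proof of this corollary, treating it as an immediate consequence of Lemmas~\ref{lem:doubling} and~\ref{lem:reset}. Your extra checks fill in details the paper leaves unstated: that $R_{n-1}^{(k)}>0$, so the quotient is defined, and that the threshold $n\geq4$ matters only because $n=3$ is a reset position when $k=3$.
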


In particular, the oscillatory growth is not merely asymptotically
periodic: the periodic modulation becomes exact after finitely many terms.
This observation will allow us to derive closed forms, generating
functions, and asymptotic results without recourse to periodic matrix
products.

Note that periodic dependence on the index has also been studied from a different
perspective.  Panario, Sahin, Wang, and Webb
\cite{PanarioSahinWangWebb2014} introduced general conditional
recurrences, in which the coefficients of a fixed-order linear recurrence
depend on the residue class of the index.  Such sequences satisfy a
constant coefficient recurrence and admit rational generating functions.
The RM-Fibonacci recurrence may be embedded in this framework by taking
the maximal order $k+1$ and allowing the coefficients to be periodic
zeros and ones.  What is special in the present setting is the highly
structured staircase pattern of these coefficients: it forces the general
constant coefficient reduction to collapse to the   sparse
relation~\eqref{eq:period-collapse}  which in turn yields exact consecutive term ratios, geometric
residue class subsequences, and the combinatorial cycle decomposition
developed below.

Another related direction concerns Fibonacci recurrences with periodic
coefficients.  Edson, Lewis, and Yayenie
\cite{EdsonLewisYayenie2011} introduced $k$-periodic Fibonacci sequences
in which the recurrence remains of order two while its coefficients vary
periodically.  The present construction is complementary: the
coefficients of the active terms remain equal to one, while the number
of active preceding terms itself changes periodically.

 \section{Closed Forms, Generating Functions, and Growth}
\label{sec:closed-forms}

The period-collapse theorem makes it possible to describe the
RM-Fibonacci sequence almost completely. In particular, the apparently
non-autonomous recurrence has a rational generating function and an exact
periodically modulated exponential form.

\subsection{Closed forms}

After the first period, \eqref{eq:block-form} already gives a complete closed form.  For the two smallest nonclassical periods, we obtain   simple
expressions.

\begin{proposition}
\label{prop:closed-small-k}
For $q\geq1$, we have
\begin{align*}
    R_{2q}^{(2)}&=3^{q-1},
\\
R_{2q+1}^{(2)}&=2\cdot3^{q-1},
\end{align*}
and
\begin{align*}
    R_{3q}^{(3)}&=2\cdot6^{q-1},
\\
R_{3q+1}^{(3)}&=4\cdot6^{q-1},
\\
R_{3q+2}^{(3)}&=8\cdot6^{q-1}.
\end{align*}
\end{proposition}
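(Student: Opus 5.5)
The plan is to specialize Corollary~\ref{cor:geometric-subsequences}, that is, the block form \eqref{eq:block-form}, to $k=2$ and $k=3$. In each case two ingredients are needed: the value of the multiplier $c_k=3\cdot2^{k-2}$ and the anchor value $R_k^{(k)}$ taken from Lemma~\ref{lem:initial}. For $k=2$ these are $c_2=3$ and $R_2^{(2)}=1$. For $k=3$ they are $c_3=6$ and $R_3^{(3)}=2$.

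Substituting into $R_{qk+r}^{(k)}=2^rR_k^{(k)}c_k^{q-1}$ gives the following for $q\ge1$. With $k=2$, we get $R_{2q}^{(2)}=3^{q-1}$ for $r=0$ and $R_{2q+1}^{(2)}=2\cdot3^{q-1}$ for $r=1$. With $k=3$, we get $2\cdot6^{q-1}$, $4\cdot6^{q-1}$ and $8\cdot6^{q-1}$ for $r=0,1,2$. These are exactly the claimed formulas.

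The one point needing care is the range of validity, because Theorem~\ref{thm:period-collapse} and the lemmas behind it carry side conditions ($n\ge3$, $n\ge4$) near the origin. Corollary~\ref{cor:geometric-subsequences} is stated for all $q\ge1$ and $k\ge2$. Its proof uses Lemma~\ref{lem:doubling} to get $R_{k+r}^{(k)}=2^rR_k^{(k)}$, which requires $k+r\ge3$ whenever $r\ge1$; that always holds for $k\ge2$. It then uses Theorem~\ref{thm:period-collapse} starting from $n=k+r\ge k$.

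To be safe, I would check the smallest cases directly against Table~\ref{tab:RM-initial-values}. For $k=2$ the table gives $R_2,R_3,R_4,R_5=1,2,3,6$. For $k=3$ it gives $R_3,R_4,R_5,R_6=2,4,8,12$. Both agree with the formulas at $q=1$, and at $q=2$ for the reset term.

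I expect no real obstacle here, since the result is a direct specialization of Corollary~\ref{cor:geometric-subsequences}.
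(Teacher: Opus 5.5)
Your proposal is correct and matches the paper's proof: the paper likewise substitutes $c_2=3$, $R_2^{(2)}=1$ and $c_3=6$, $R_3^{(3)}=2$ into the block form~\eqref{eq:block-form}. Your check that the side conditions of Lemma~\ref{lem:doubling} and Theorem~\ref{thm:period-collapse} hold for $k=2,3$, and your comparison with the table, are sound additions that the paper leaves implicit.
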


\begin{proof}
For $k=2$ we have $R_2^{(2)}=1$ and $c_2=3$,  whereas for $k=3$, $R_3^{(3)}=2$ and $c_3=6$. The result follows immediately from~\eqref{eq:block-form}. 
\end{proof}

For $k\geq4$, Formula~\eqref{eq:explicit-qkr} also determines the exact powers of $2$ and $3$
appearing in each term.

\begin{corollary}
\label{cor:valuations}
Let $k\geq4$, $q\geq1$, and $0\leq r\leq k-1$. Then
\begin{align*}
\nu_3\!\left(R_{qk+r}^{(k)}\right)&=q\\
\nu_2\!\left(R_{qk+r}^{(k)}\right)
&=
(k-2)q+r-2,
\end{align*}
where $\nu_p(m)$ denotes the $p$-adic valuation of $m$.
\end{corollary}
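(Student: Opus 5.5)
The plan is to read the valuations directly off Corollary~\ref{cor:explicit-qkr}. For $k\geq4$, $q\geq1$ and $0\leq r\leq k-1$, that corollary gives
\[
R_{qk+r}^{(k)}=3^q\,2^{(k-2)q+r-2}.
\]
This factorization is already into prime powers. By uniqueness of prime factorization, the exponent of $3$ is $\nu_3$ and the exponent of $2$ is $\nu_2$.

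The only point needing care is that the exponent $(k-2)q+r-2$ must be a nonnegative integer. Otherwise the expression would not be an integer factorization, and the identification with $\nu_2$ would fail. I would check this directly. Since $k\geq4$ and $q\geq1$, we have $(k-2)q\geq2$. Together with $r\geq0$, this gives $(k-2)q+r-2\geq0$.

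It then follows that $R_{qk+r}^{(k)}$ is the product of the coprime prime powers $3^q$ and $2^{(k-2)q+r-2}$. Hence $\nu_3\bigl(R_{qk+r}^{(k)}\bigr)=q$ and $\nu_2\bigl(R_{qk+r}^{(k)}\bigr)=(k-2)q+r-2$.

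I expect no real obstacle here. The nonnegativity check is the one step where the hypothesis $k\geq4$ is used, beyond its role in invoking Corollary~\ref{cor:explicit-qkr} in the first place.
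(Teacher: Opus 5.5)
Your proposal is correct and follows the paper's approach: the paper gives no separate proof and reads both valuations directly off the factorization $R_{qk+r}^{(k)}=3^q\,2^{(k-2)q+r-2}$ in Corollary~\ref{cor:explicit-qkr}. Your check that $(k-2)q+r-2\geq 0$, which holds since $(k-2)q\geq 2$, is a small detail the paper leaves implicit.
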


This gives a   transparent arithmetic description of the
sequence: the exponent of $3$ records the number of completed memory
cycles, while the exponent of $2$ records both the cycle number and the
position within the current cycle.

The sums of complete blocks are equally simple.

\begin{proposition}
\label{prop:block-sums}
For $q\geq1$, we have
\[
\sum_{r=0}^{k-1}R_{qk+r}^{(k)}
=
\begin{cases}
(2^k-1)R_k^{(k)}c_k^{q-1},
& k=2,3,\\[1mm]
(2^k-1)3^q2^{(k-2)q-2},
& k\geq4.
\end{cases}
\]
\end{proposition}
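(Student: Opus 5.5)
The plan is to sum the block form \eqref{eq:block-form} of Corollary~\ref{cor:geometric-subsequences} over a complete residue system. For fixed $q\geq1$ and each $0\leq r\leq k-1$, that corollary gives $R_{qk+r}^{(k)}=2^rR_k^{(k)}c_k^{q-1}$. The factor $R_k^{(k)}c_k^{q-1}$ does not depend on $r$, so it can be pulled out of the sum. This reduces the block sum to $R_k^{(k)}c_k^{q-1}\sum_{r=0}^{k-1}2^r$.

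Next I evaluate the remaining geometric sum as $\sum_{r=0}^{k-1}2^r=2^k-1$. This gives $(2^k-1)R_k^{(k)}c_k^{q-1}$ for every $k\geq2$, which is already the stated formula in the cases $k=2,3$.

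For $k\geq4$ I would shortcut the substitution by summing Corollary~\ref{cor:explicit-qkr} directly: $\sum_{r=0}^{k-1}3^q2^{(k-2)q+r-2}=3^q2^{(k-2)q-2}(2^k-1)$. As a consistency check, one can instead insert $R_k^{(k)}=3\cdot2^{k-4}$ from Lemma~\ref{lem:initial} and $c_k=3\cdot2^{k-2}$. That gives $3^q2^{k-4+(k-2)(q-1)}=3^q2^{(k-2)q-2}$, the same result.

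No step is a genuine obstacle. The only point needing care is the range of validity: Corollary~\ref{cor:geometric-subsequences} is stated for $q\geq1$ and all $0\leq r\leq k-1$, so the whole block $qk,\dots,qk+k-1$ is covered without touching the initial transient. Keeping the cases $k=2,3$ separate is simply because $R_k^{(k)}$ is not given by $3\cdot2^{k-4}$ there.
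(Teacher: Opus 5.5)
Your proposal is correct and follows the paper's own proof: you factor $R_k^{(k)}c_k^{q-1}$ out of the sum given by Corollary~\ref{cor:geometric-subsequences}, evaluate $\sum_{r=0}^{k-1}2^r=2^k-1$, and for $k\geq4$ sum Corollary~\ref{cor:explicit-qkr} directly. Your check that the block $qk,\dots,qk+k-1$ stays within the corollary's range of validity for $q\geq1$ is a point the paper leaves implicit.
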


\begin{proof}
Using Corollary~\ref{cor:geometric-subsequences},
\[
\sum_{r=0}^{k-1}R_{qk+r}^{(k)}
=
R_k^{(k)}c_k^{q-1}
\sum_{r=0}^{k-1}2^r.
\]
Since
\[
\sum_{r=0}^{k-1}2^r=2^k-1,
\]
the first formula follows. The second follows from
Corollary~\ref{cor:explicit-qkr}.
\end{proof}

\subsection{Rational generating functions}

A periodic recurrence with variable order might initially suggest a more
complicated generating-function structure. In the present case, however,
Theorem~\ref{thm:period-collapse} shows that the ordinary generating
function is rational. Let $G_k(x)
=
\sum_{n\geq0}R_n^{(k)}x^n$.

\begin{theorem}
\label{thm:general-gf}
For every $k\geq2$, we have 
\[
G_k(x)
=
\frac{P_k(x)}
{1-c_kx^k},
\]
where 
\[
P_k(x)
=
\sum_{n=0}^{k-1}R_n^{(k)}x^n
+
\sum_{r=0}^{k-1}
\left(
R_{k+r}^{(k)}-c_kR_r^{(k)}
\right)x^{k+r}.
\]
\end{theorem}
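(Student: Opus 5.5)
The plan is to multiply the generating function by the denominator and read off coefficients. We compute
\[
(1-c_kx^k)G_k(x)=\sum_{n\geq0}R_n^{(k)}x^n-\sum_{n\geq k}c_kR_{n-k}^{(k)}x^n ,
\]
so the coefficient of $x^n$ on the right is $R_n^{(k)}$ for $0\leq n\leq k-1$. For $n\geq k$ it is $R_n^{(k)}-c_kR_{n-k}^{(k)}$. We then split the range $n\geq k$ into the window $k\leq n\leq 2k-1$ and the tail $n\geq 2k$.

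On the window we write $n=k+r$ with $0\leq r\leq k-1$. The coefficient there is exactly $R_{k+r}^{(k)}-c_kR_r^{(k)}$, which is the second sum in $P_k(x)$. Together with the first $k$ coefficients, this shows that the truncation of $(1-c_kx^k)G_k(x)$ to degrees below $2k$ is precisely $P_k(x)$.

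For the tail $n\geq 2k$ we put $m=n-k\geq k$. Theorem~\ref{thm:period-collapse} applies to $m$ and gives $R_{m+k}^{(k)}=c_kR_m^{(k)}$, so every tail coefficient vanishes. Hence $(1-c_kx^k)G_k(x)=P_k(x)$ as formal power series. Since $1-c_kx^k$ has constant term $1$, it is invertible in $\mathbb{Z}[[x]]$, and dividing by it yields the stated identity. Convergence for $|x|<c_k^{-1/k}$ follows if an analytic statement is wanted.

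No step is a genuine obstacle, since Theorem~\ref{thm:period-collapse} does all the work. The only point needing care is the index bookkeeping. The collapse relation is guaranteed only for $n\geq k$, and not for $0\leq n<k$, where the initial transient from Lemma~\ref{lem:initial} may break it. That is exactly why the correction terms $R_{k+r}^{(k)}-c_kR_r^{(k)}$ appear in degrees $k,\ldots,2k-1$, so we must check that the tail begins precisely at degree $2k$.
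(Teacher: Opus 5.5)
Your proof is correct and follows essentially the same route as the paper: multiply by $1-c_kx^k$, match the coefficients in degrees below $2k$ with $P_k(x)$, and show that every coefficient of degree $\geq 2k$ vanishes by Theorem~\ref{thm:period-collapse} applied at $m=n-k\geq k$. Your explicit remark that $1-c_kx^k$ is invertible in $\mathbb{Z}[[x]]$ is a small point the paper leaves implicit.
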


\begin{proof}
Multiplying $G_k(x)$ by $1-c_kx^k$ gives
\[
(1-c_kx^k)G_k(x)
=
\sum_{n\geq0}R_n^{(k)}x^n
-
c_k
\sum_{n\geq0}R_n^{(k)}x^{n+k}.
\]
Hence the coefficient of $x^m$ is $R_m^{(k)}$ when $m<k$, and $R_m^{(k)}-c_kR_{m-k}^{(k)}$ when $m\geq k$.

By Theorem~\ref{thm:period-collapse}, we have
\[
R_m^{(k)}=c_kR_{m-k}^{(k)},
\]
for all $m\geq2k$, so all coefficients of degree at least $2k$ vanish. Therefore
$(1-c_kx^k)G_k(x)$ is the polynomial $P_k(x)$ displayed above.
\end{proof}

For $k=2$ and $k=3$, this yields the following expressions.

\begin{corollary}
\label{cor:gf-small}
We have  
\[
\begin{aligned}
    G_2(x)
&=
\frac{x+x^2-x^3}{1-3x^2},\\
G_3(x)
&=
\frac{x+x^2+2x^3-2x^4+2x^5}
{1-6x^3}.
\end{aligned}
\]
\end{corollary}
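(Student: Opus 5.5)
The plan is to specialize Theorem~\ref{thm:general-gf} to $k=2$ and $k=3$ and compute the numerator polynomial $P_k(x)$ explicitly. The only inputs needed are the first $2k$ terms $R_0^{(k)},\ldots,R_{2k-1}^{(k)}$, read from Table~\ref{tab:RM-initial-values} or recomputed from Definition~\ref{def:RMF}, and the period multipliers $c_2=3\cdot2^{0}=3$ and $c_3=3\cdot2^{1}=6$. No further structural argument is required. Theorem~\ref{thm:general-gf} already guarantees that $(1-c_kx^k)G_k(x)$ is a polynomial of degree at most $2k-1$, because all coefficients of degree $m\geq 2k$ vanish by Theorem~\ref{thm:period-collapse}.

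For $k=2$, I would use $R_0^{(2)},R_1^{(2)},R_2^{(2)},R_3^{(2)}=0,1,1,2$. The first sum in $P_2(x)$ then gives $x$. The second sum gives $(R_2^{(2)}-3R_0^{(2)})x^2+(R_3^{(2)}-3R_1^{(2)})x^3=x^2-x^3$. Hence $P_2(x)=x+x^2-x^3$, and dividing by $1-3x^2$ gives the stated $G_2$.

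For $k=3$, I would use $R_0^{(3)},\ldots,R_5^{(3)}=0,1,1,2,4,8$. The first sum gives $x+x^2$. The correction terms are $(2-6\cdot0)x^3=2x^3$, then $(4-6\cdot1)x^4=-2x^4$, and finally $(8-6\cdot1)x^5=2x^5$. This yields $P_3(x)=x+x^2+2x^3-2x^4+2x^5$ over the denominator $1-6x^3$.

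The only point requiring care is bookkeeping rather than a genuine obstacle. One must index the correction terms as $R_{k+r}^{(k)}-c_kR_r^{(k)}$ for $0\le r\le k-1$, and use the correct early values. In particular $R_2^{(2)}=1$ and $R_3^{(3)}=2$ are the exceptional values from Lemma~\ref{lem:initial}; they do not follow the generic pattern. As a sanity check, I would expand each rational function for a few terms, for instance $G_2(x)=x+x^2+2x^3+3x^4+6x^5+\cdots$, and compare with the rows of Table~\ref{tab:RM-initial-values}.
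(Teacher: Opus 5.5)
Your proposal is correct and follows the same route as the paper, which obtains this corollary directly by specializing Theorem~\ref{thm:general-gf} to $k=2,3$ with $c_2=3$, $c_3=6$ and the initial values $R_0^{(k)},\ldots,R_{2k-1}^{(k)}$. Your numerators $x+x^2-x^3$ and $x+x^2+2x^3-2x^4+2x^5$ are right, and expanding the two rational functions reproduces the corresponding rows of Table~\ref{tab:RM-initial-values}.
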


For $k\geq4$, almost all terms in the second sum defining $P_k(x)$
cancel, leading to a   compact formula.

\begin{theorem}
\label{thm:gf-explicit}
Let $k\geq4$. Then
\[
G_k(x)
=
\frac{
x
+
\displaystyle\sum_{j=2}^{k-1}2^{j-2}x^j
+
3\cdot2^{k-4}x^k
-
3\cdot2^{k-3}x^{k+1}
}
{
1-3\cdot2^{k-2}x^k
}.
\]
Equivalently,
\[
G_k(x)
=
\frac{
x
+
\displaystyle
x^2\frac{1-(2x)^{k-2}}{1-2x}
+
3\cdot2^{k-4}x^k(1-2x)
}
{
1-3\cdot2^{k-2}x^k
}.
\]
\end{theorem}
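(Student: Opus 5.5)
We specialize Theorem~\ref{thm:general-gf} to $k\geq4$ and compute the numerator $P_k(x)$ explicitly. The first sum in $P_k(x)$ is just the initial segment $\sum_{n=0}^{k-1}R_n^{(k)}x^n$. Lemma~\ref{lem:initial} gives $R_0^{(k)}=0$, $R_1^{(k)}=1$ and $R_n^{(k)}=2^{n-2}$ for $2\leq n\leq k-1$. Hence this sum equals $x+\sum_{j=2}^{k-1}2^{j-2}x^j$.

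For the second sum we need each coefficient $R_{k+r}^{(k)}-c_kR_r^{(k)}$ with $0\leq r\leq k-1$.

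\begin{itemize}
\item By Corollary~\ref{cor:explicit-qkr} with $q=1$, $R_{k+r}^{(k)}=3\cdot2^{k+r-4}$.
\item For $r=0$: since $R_0^{(k)}=0$, the coefficient is $R_k^{(k)}=3\cdot2^{k-4}$.
\item For $r=1$: the coefficient is $3\cdot2^{k-3}-3\cdot2^{k-2}=-3\cdot2^{k-3}$.
\item For $2\leq r\leq k-1$: we get $3\cdot2^{k+r-4}-3\cdot2^{k-2}\cdot2^{r-2}=0$.
\end{itemize}

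So only the $x^k$ and $x^{k+1}$ terms survive. This cancellation is the only real point of the argument. It holds because, for $k\geq4$, the block $R_k,\dots,R_{2k-1}$ and the transient $R_2,\dots,R_{k-1}$ are both pure doubling runs whose ratio is exactly $c_k$. The checks at $r=0$ and $r=1$ are where the transient fails to be geometric ($R_0=0$, $R_1=R_2$).

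Substituting these values into Theorem~\ref{thm:general-gf} gives the first displayed formula. For the equivalent form, we sum the finite geometric series:
\[
\sum_{j=2}^{k-1}2^{j-2}x^j=x^2\frac{1-(2x)^{k-2}}{1-2x}.
\]
We then factor $3\cdot2^{k-4}x^k-3\cdot2^{k-3}x^{k+1}=3\cdot2^{k-4}x^k(1-2x)$.

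The only care needed is the index bookkeeping at $r=0,1$. As a sanity check, we will expand the result for $k=4$ and compare it with the row of Table~\ref{tab:RM-initial-values}.
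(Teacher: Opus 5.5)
Your proposal is correct and follows essentially the same route as the paper. Like the paper, you specialize Theorem~\ref{thm:general-gf}, read off the initial segment from Lemma~\ref{lem:initial}, and check that $R_{k+r}^{(k)}-c_kR_r^{(k)}$ vanishes for $2\le r\le k-1$, so that only the $x^k$ and $x^{k+1}$ terms survive. The differences are cosmetic: you take $R_{k+r}^{(k)}=3\cdot2^{k+r-4}$ from Corollary~\ref{cor:explicit-qkr}, whereas the paper writes $R_{k+r}^{(k)}=2^rR_k^{(k)}$ and uses $c_k=4R_k^{(k)}$, and you also write out the geometric-series step for the equivalent form, which the paper leaves implicit.
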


\begin{proof}
For $0\leq r\leq k-1$,
Corollary~\ref{cor:geometric-subsequences} gives $R_{k+r}^{(k)}
=
2^rR_k^{(k)}$.  For $k\geq4$, $R_k^{(k)}=3\cdot2^{k-4}$ and $c_k
=
3\cdot2^{k-2}
=
4R_k^{(k)}$.

For $r\geq2$, $R_r^{(k)}=2^{r-2}$,  so
\[
R_{k+r}^{(k)}-c_kR_r^{(k)}
=
2^rR_k^{(k)}
-
4R_k^{(k)}2^{r-2}
=
0.
\]
For $r=0$,
\[
R_k^{(k)}-c_kR_0^{(k)}
=
R_k^{(k)},
\]
while for $r=1$,
\[
R_{k+1}^{(k)}-c_kR_1^{(k)}
=
2R_k^{(k)}-4R_k^{(k)}
=
-2R_k^{(k)}.
\]
Substitution into Theorem~\ref{thm:general-gf} yields the result.
\end{proof}

Thus, despite being defined by a periodically changing recurrence order,
the RM-Fibonacci sequence belongs to the class of C-finite sequences
after a finite initial transient.

The constant coefficient recurrence~\eqref{eq:constant-coefficient-recurrence} has characteristic polynomial
\begin{equation}
\label{eq:characteristic}
\chi_k(z)=z^k-c_k
=
z^k-3\cdot2^{k-2}.
\end{equation}

Let $\rho_k
=
c_k^{1/k}
=
\left(3\cdot2^{k-2}\right)^{1/k}$. If $\zeta_k=e^{2\pi i/k}$,  then the roots of \eqref{eq:characteristic} are
\[
\rho_k,
\;
\rho_k\zeta_k,
\;
\rho_k\zeta_k^2,
\;
\ldots,
\;
\rho_k\zeta_k^{k-1}.
\]
Unlike the usual Fibonacci recurrence, there is no single characteristic
root having strictly greater modulus than all the others: every root has
modulus $\rho_k$. This is precisely what produces the periodic
modulation according to the residue class of $n$.

Indeed, writing $n=qk+r$ with $0\leq r\leq k-1$,   we obtain $R_n^{(k)}
=
C_{k,r}\rho_k^n$ for all $q\geq1$, where $C_{k,r}
=
\dfrac{2^rR_k^{(k)}}{c_k\rho_k^r}$. Thus the normalized sequence $\dfrac{R_n^{(k)}}{\rho_k^n}$ is eventually periodic with period $k$.

\subsection{Exponential growth}

The preceding formulas determine the exponential growth exactly.

\begin{theorem}
\label{thm:growth}
For every $k\geq2$,  we have 
\begin{equation}
\label{eq:rho}
\lim_{n\to\infty}
\left(R_n^{(k)}\right)^{1/n}
=\rho_k
=
2\left(\frac34\right)^{1/k}.
\end{equation}
\end{theorem}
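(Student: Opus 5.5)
The plan is to read off the growth rate directly from the block form in Corollary~\ref{cor:geometric-subsequences}, which expresses every term beyond the first period as an exact multiple of a power of $c_k$. Writing $n=qk+r$ with $q\geq1$ and $0\leq r\leq k-1$, we have
\[
R_n^{(k)}=2^rR_k^{(k)}c_k^{q-1}=A_{k,r}\,c_k^{n/k},
\qquad A_{k,r}:=\frac{2^rR_k^{(k)}}{c_k^{1+r/k}},
\]
as already noted in the discussion following \eqref{eq:characteristic}, where this was written as $C_{k,r}\rho_k^n$. The key point is that $A_{k,r}$ takes only $k$ values, all strictly positive and independent of $n$. Positivity holds because $R_k^{(k)}>0$ by Lemma~\ref{lem:initial}: it equals $1$, $2$, or $3\cdot2^{k-4}$.

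Set $m_k=\min_r A_{k,r}$ and $M_k=\max_r A_{k,r}$. Then for all $n\geq k$,
\[
m_k\rho_k^n\leq R_n^{(k)}\leq M_k\rho_k^n .
\]
Taking $n$th roots gives
\[
m_k^{1/n}\rho_k\leq \bigl(R_n^{(k)}\bigr)^{1/n}\leq M_k^{1/n}\rho_k,
\]
and since $m_k^{1/n}\to1$ and $M_k^{1/n}\to1$, the squeeze theorem yields the limit $\rho_k=c_k^{1/k}$.

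It remains to simplify:
\[
\rho_k=\bigl(3\cdot2^{k-2}\bigr)^{1/k}=\bigl(2^k\cdot\tfrac34\bigr)^{1/k}=2\left(\tfrac34\right)^{1/k}.
\]

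There is no real obstacle here. The only point needing care is that the squeeze uses finitely many residue classes with uniform positive constants. This is what makes the limit exist outright, rather than merely the $\limsup$, even though the normalized sequence $R_n^{(k)}/\rho_k^n$ oscillates periodically.
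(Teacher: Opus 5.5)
Your proposal is correct and follows essentially the paper's argument. The paper also starts from the block form $R_{qk+r}^{(k)}=2^rR_k^{(k)}c_k^{q-1}$, takes logarithms, and lets $q\to\infty$ with $r$ bounded. Your squeeze between $\min_r$ and $\max_r$ of the $k$ positive constants is just the multiplicative version of that step.
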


\begin{proof}
Let $n=qk+r$ with $0\leq r\leq k-1$. Using Corollary~\ref{cor:geometric-subsequences},
\[
R_n^{(k)}
=
2^rR_k^{(k)}c_k^{q-1}.
\]
Therefore
\[
\frac{1}{n}\log R_n^{(k)}
=
\frac{
r\log2+\log R_k^{(k)}+(q-1)\log c_k
}
{qk+r}.
\]
As $n\to\infty$, necessarily $q\to\infty$, and hence
\[
\lim_{n\to\infty}
\frac{1}{n}\log R_n^{(k)}
=
\frac{1}{k}\log c_k.
\]
Exponentiating gives
\[
\lim_{n\to\infty}
\left(R_n^{(k)}\right)^{1/n}
=
c_k^{1/k}.
\]
Finally,
\[
c_k^{1/k}
=
\left(3\cdot2^{k-2}\right)^{1/k}
=
2\left(\frac34\right)^{1/k}.\qedhere
\]
\end{proof}

The growth rate has a simple dependence on the rotation period.

\begin{proposition}
\label{prop:rho-monotone}
\sloppy The sequence $(\rho_k)_{k\geq2}$ is strictly increasing and $\sqrt{3}
=
\rho_2
<
\rho_3
<
\rho_4
<
\cdots
<
2$. Moreover, $\lim_{k\to\infty}\rho_k=2$. 
\end{proposition}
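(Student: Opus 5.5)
We will work directly from the closed form $\rho_k=2\left(\frac34\right)^{1/k}$ of Theorem~\ref{thm:growth}, which holds for every $k\geq2$. For $k=2$ it gives $\rho_2=2\cdot\frac{\sqrt3}{2}=\sqrt3$, so the first value is immediate.

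For strict monotonicity, consider the function $f(t)=\left(\frac34\right)^{1/t}=\exp\!\left(\frac{1}{t}\log\frac34\right)$ for real $t>0$. Since $\log\frac34<0$, the exponent $\frac{1}{t}\log\frac34$ is negative and strictly increasing in $t$, because $1/t$ is strictly decreasing. As $\exp$ is strictly increasing, $f$ is strictly increasing, and therefore so is $\rho_k=2f(k)$ along the integers $k\geq2$. Alternatively, to avoid calculus, we can compare $\rho_k^{k(k+1)}$ with $\rho_{k+1}^{k(k+1)}$: after cancelling powers of $2$, this reduces to $\left(\frac34\right)^{k+1}<\left(\frac34\right)^{k}$, which is obvious.

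For the upper bound and the limit, note that $0<\frac34<1$ and $1/k>0$ give $\left(\frac34\right)^{1/k}<1$, so $\rho_k<2$ for every $k$. As $k\to\infty$ we have $1/k\to0$, so by continuity of $t\mapsto\left(\frac34\right)^t$ at $t=0$ we get $\left(\frac34\right)^{1/k}\to1$, hence $\rho_k\to2$. There is no real obstacle here. The only point needing care is the sign of $\log\frac34$ in the monotonicity step, since it reverses the naive intuition that $x^{1/k}$ decreases in $k$; for $x<1$ it in fact increases toward $1$.
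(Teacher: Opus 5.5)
Your proposal is correct and follows essentially the same route as the paper: both use $\rho_k=2\left(\frac34\right)^{1/k}$ and the fact that $\log\frac34<0$ to get strict monotonicity, and both use $\left(\frac34\right)^{1/k}<1$ and $\left(\frac34\right)^{1/k}\to1$ for the bound and the limit. Your explicit check that $\rho_2=\sqrt3$, which the paper leaves implicit, and your calculus-free comparison of $k(k+1)$-th powers are small but welcome additions.
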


\begin{proof}
From \eqref{eq:rho},
\[
\log\rho_k
=
\log2+\frac{1}{k}\log\frac34.
\]
Since $\log\frac34<0$,  the quantity $\frac{1}{k}\log\frac34$ is strictly increasing with $k$. Hence $\rho_k$ is strictly increasing.

Furthermore, $\left(\frac34\right)^{1/k}<1$,  so $\rho_k<2$ for every finite $k$, while $\lim_{k\to\infty}
\left(\frac34\right)^{1/k}=1$. Therefore $\lim_{k\to\infty}\rho_k=2$. 
\end{proof}

Consequently, allowing a longer rotation of the memory lengths increases
the average exponential growth rate. The limiting value $2$ is natural:
within each period, all but one transition are exact doublings, while the
single reset transition has the smaller factor $3/2$.

The global growth rate $\rho_k$ describes the geometric mean over a
period, but consecutive terms alternate between two exact local growth
factors. Indeed,
\[
\prod_{j=1}^{k}
\frac{R_{n+j}^{(k)}}{R_{n+j-1}^{(k)}}
=
\frac32\,2^{k-1}
=
c_k,
\]
and hence
\[
\rho_k
=
\left(
\frac32\,2^{k-1}
\right)^{1/k}.
\]

Thus the exponential growth rate is the geometric mean of the $k$
local multipliers occurring during one memory cycle, i.e., 
\[
\rho_k
=
\left(
\underbrace{2\cdot 2\cdots 2}_{k-1\text{ times}}
\cdot\frac32
\right)^{1/k}.
\]
This reconciles the periodic behavior of the consecutive quotients with
the existence of a single global exponential growth rate.

\section{Periodic Tilings and  Restricted Compositions}
\label{sec:tilings}

Before introducing the formal tiling model, it is useful to keep a simple
picture in mind. Imagine a worker constructing a long wall from left to
right while the available tile lengths change periodically. At successive
stages, the supplier offers tiles of lengths
\[
1,2;\qquad
1,2,3;\qquad
1,2,3,4;\qquad
\ldots;\qquad
1,2,\ldots,k+1,
\]

after which the same cycle begins again. Thus the set of admissible tile
lengths depends on the phase of the construction.

For example, when $k=3$, the available lengths cycle as
\[
\{1,2\},\qquad
\{1,2,3\},\qquad
\{1,2,3,4\},\qquad
\{1,2\},\qquad\ldots.
\]
Consequently, when the right endpoint of the wall reaches a given
position, the possible choices for the final tile depend periodically on
that position. A last-tile decomposition therefore produces a recurrence
whose order varies periodically. This is precisely the combinatorial
mechanism underlying the RM-Fibonacci numbers.

The structural simplicity of RM-Fibonacci numbers thus has a natural
tiling interpretation. As with the classical Fibonacci and higher-order
Fibonacci sequences, the recurrence can be represented by tilings of a
one-dimensional board. The essential difference is that the admissible
tile lengths now depend periodically on the position at which a tile
ends.

Since the empty tiling is naturally counted by $1$, it is convenient to
shift the RM-Fibonacci sequence by one position.

\begin{definition}
For $k\geq1$, define $A_n^{(k)}:=R_{n+1}^{(k)}$ for all $n\geq0$.  We also set $A_m^{(k)}=0$ for all $  m<0 $. In particular, $A_0^{(k)}=1$. 
\end{definition}

From Definition~\ref{def:RMF} of $R_n^{(k)}$, we obtain
\begin{equation}
\label{eq:A-recurrence}
A_n^{(k)}
=
\sum_{j=1}^{\,2+((n+1)\bmod k)}
A_{n-j}^{(k)},
\qquad n\geq1.
\end{equation}

Thus the shifted sequence has precisely the normalization required for a
tiling enumeration.

\subsection{Endpoint-periodic tilings}

Let
\[
L_k(n)
:=
2+((n+1)\bmod k),
\qquad n\geq1.
\]
The quantity $L_k(n)$ will be interpreted as the largest tile length
permitted for a tile whose right endpoint is position $n$.

\begin{definition}
\label{def:periodic-tiling}
Let $k\geq1$. An \emph{RM-tiling of length $n$ and period $k$} is a
tiling of a $1\times n$ board by integer length tiles such that every tile
of length $j$ ending at position $s$ satisfies
\begin{equation}
\label{eq:endpoint-condition}
1\leq j\leq L_k(s).
\end{equation}
The empty board has one tiling.
\end{definition}

The condition depends on the right endpoint of each tile, rather than on
its starting point. This convention is exactly what is required by the
recurrence: when the last tile of a tiling of length $n$ is removed, the
remaining object is an admissible tiling of length $n-j$ with no change
to any earlier endpoint condition.

Let $\mathcal{T}_n^{(k)}$ denote the set of RM-tilings of length $n$, and
write $T_n^{(k)}
=
\left|\mathcal{T}_n^{(k)}\right|$.  

\begin{theorem} 
\label{thm:tiling-interpretation}
For every $k\geq1$ and $n\geq0$, we have  $T_n^{(k)}
=
A_n^{(k)}
=
R_{n+1}^{(k)}$. 
\end{theorem}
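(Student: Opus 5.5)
We argue by strong induction on $n$, using a last-tile decomposition to show that $T_n^{(k)}$ satisfies the same recurrence \eqref{eq:A-recurrence} as $A_n^{(k)}$, with the same initial value. It is convenient to set $T_m^{(k)}=0$ for $m<0$, matching the convention $A_m^{(k)}=0$. The base case is $n=0$: the empty board has exactly one tiling, so $T_0^{(k)}=1=R_1^{(k)}=A_0^{(k)}$.

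For $n\geq1$, we partition $\mathcal{T}_n^{(k)}$ according to the length $j$ of the last tile. That tile ends at position $n$, so \eqref{eq:endpoint-condition} allows exactly $1\leq j\leq L_k(n)=2+((n+1)\bmod k)$. We also need $j\leq n$; when $j>n$, no tiling exists and $T_{n-j}^{(k)}=0$ by convention, so these terms contribute nothing.

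Next, fix $j$ with $1\leq j\leq\min(n,L_k(n))$. Deleting the last tile gives a bijection between tilings in $\mathcal{T}_n^{(k)}$ whose last tile has length $j$ and the set $\mathcal{T}_{n-j}^{(k)}$. This rests on the fact that the admissibility condition for a tile depends only on its own length and right endpoint. Every remaining tile keeps both its length and its right endpoint, so the truncated tiling is admissible. Conversely, appending a tile of length $j$ ending at $n$ to any tiling in $\mathcal{T}_{n-j}^{(k)}$ gives an admissible tiling of length $n$, because $j\leq L_k(n)$.

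Summing over $j$ gives
\[
T_n^{(k)}=\sum_{j=1}^{2+((n+1)\bmod k)}T_{n-j}^{(k)},\qquad n\geq1,
\]
which is exactly \eqref{eq:A-recurrence}. It remains to check that \eqref{eq:A-recurrence} holds for all $n\geq1$, including when $n-j$ is negative. Substituting $A_m^{(k)}=R_{m+1}^{(k)}$ turns it into \eqref{eq:RM-definition} at index $n+1\geq2$. In that substitution, $A_{-1}^{(k)}=R_0^{(k)}=0$, and every more negative index gives zero, consistent with the convention in Definition~\ref{def:RMF}.

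The two sequences satisfy the same recurrence for all $n\geq1$ and agree at $n=0$, so strong induction gives $T_n^{(k)}=A_n^{(k)}=R_{n+1}^{(k)}$ for every $n\geq0$. The only point needing care is this boundary bookkeeping: the index $n-j=-1$ must correspond to $R_0^{(k)}=0$ rather than to the empty tiling. The conventions above handle this.
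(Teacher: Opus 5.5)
Your proof is correct and follows the same route as the paper. Both use a last-tile decomposition to show that $T_n^{(k)}$ satisfies recurrence \eqref{eq:A-recurrence} for $n\geq1$, and both check the matching initial value $T_0^{(k)}=1=A_0^{(k)}$. Your explicit handling of the terms with $j>n$ (via $T_m^{(k)}=0$ for $m<0$) and of the identification $A_{-1}^{(k)}=R_0^{(k)}=0$ is boundary bookkeeping that the paper leaves implicit, and you handle it correctly.
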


\begin{proof}
First, we have $T_0^{(k)}=1=A_0^{(k)}$. Now, consider a nonempty RM-tiling of length $n$. If its final tile has length
$j$, then by Definition~\ref{def:periodic-tiling}, we have  $1\leq j\leq L_k(n)$. Removing this final tile leaves an arbitrary RM-tiling of length $n-j$.

Conversely, every RM-tiling of length $n-j$ can be extended uniquely by
appending a tile of length $j$, provided $1\leq j\leq L_k(n)$.  Hence the usual last tile decomposition gives
\[
T_n^{(k)}
=
\sum_{j=1}^{L_k(n)}T_{n-j}^{(k)}=\sum_{j=1}^{2+((n+1)\bmod k)}T_{n-j}^{(k)},
\]
which is exactly recurrence~\eqref{eq:A-recurrence}. The initial
conditions also agree, so $T_n^{(k)}=A_n^{(k)}=R_{n+1}^{(k)}$. 
\end{proof}

For $k=1$, one has $L_1(n)=2$ for every endpoint $n$. Thus Definition~\ref{def:periodic-tiling}
reduces to the familiar tiling of a board by monominoes and dominoes,
recovering the classical Fibonacci tiling interpretation
\cite{BenjaminQuinn2003,BenjaminHeberle2014}.

\subsection{Restricted compositions}

The same objects may be described   compactly as integer
compositions.

Recall that a composition of $n$ is an ordered tuple $\lambda=(\lambda_1,\ldots,\lambda_m)$ of positive integers satisfying $\lambda_1+\cdots+\lambda_m=n$.
 
For such a composition, define its partial sums $s_i
=
\lambda_1+\cdots+\lambda_i$ with  $1\leq i\leq m$. 
The number $s_i$ is the right endpoint of the tile corresponding to the
part $\lambda_i$.

\begin{definition}
A composition $\lambda=(\lambda_1,\ldots,\lambda_m)$ of $n$ is called a \emph{$k$-rotating composition} if for every $i$, we have 
\begin{equation}
\label{eq:composition-condition}
\lambda_i
\leq L_k(s_i).
\end{equation} 
\end{definition}

\begin{corollary}
\label{cor:composition-interpretation}
The number $R_{n+1}^{(k)}$ is equal to the number of
$k$-rotating compositions of $n$.
\end{corollary}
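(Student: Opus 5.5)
The natural approach is to exhibit an explicit bijection between $\mathcal{T}_n^{(k)}$ and the set of $k$-rotating compositions of $n$, and then invoke Theorem~\ref{thm:tiling-interpretation}. Given an RM-tiling of length $n$, read its tiles from left to right and record their lengths $(\lambda_1,\ldots,\lambda_m)$. Since the tiles cover the board $[0,n]$ without overlap, these lengths are positive integers summing to $n$, so they form a composition of $n$. The right endpoint of the $i$th tile is exactly the partial sum $s_i=\lambda_1+\cdots+\lambda_i$.

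The admissibility condition \eqref{eq:endpoint-condition} for that tile therefore reads $\lambda_i\le L_k(s_i)$. This is precisely \eqref{eq:composition-condition}, so the image is a $k$-rotating composition.

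For the inverse map, a $k$-rotating composition determines the tiling whose $i$th tile occupies the cells from $s_{i-1}+1$ to $s_i$, with $s_0=0$. Condition \eqref{eq:composition-condition} then guarantees that every tile satisfies the endpoint rule. The empty board corresponds to the empty composition of $0$, which matches the normalization $T_0^{(k)}=1$. The two maps are clearly mutually inverse, so the number of $k$-rotating compositions of $n$ equals $T_n^{(k)}$, and Theorem~\ref{thm:tiling-interpretation} gives $T_n^{(k)}=R_{n+1}^{(k)}$.

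As an alternative that does not use the tiling language, I could count the compositions directly: let $C_n$ denote their number and classify compositions by their last part $j=\lambda_m$. Its partial sum is $s_m=n$, so the admissible values are $1\le j\le L_k(n)$, and deleting the last part leaves an arbitrary $k$-rotating composition of $n-j$, since earlier partial sums are unchanged. This gives $C_n=\sum_{j=1}^{L_k(n)}C_{n-j}$ with $C_0=1$, which is recurrence \eqref{eq:A-recurrence}.

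There is no genuine obstacle here. The only point needing care is to check that the endpoint convention of Definition~\ref{def:periodic-tiling} matches the partial-sum convention exactly, in particular that the empty case is counted as $1$ on both sides.
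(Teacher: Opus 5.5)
Your proposal is correct and follows essentially the same route as the paper: read an RM-tiling left to right, record the tile lengths as a composition, identify each tile's right endpoint with the partial sum $s_i$ so that the endpoint rule becomes the composition condition, invert by placing the $i$th tile on cells $s_{i-1}+1,\ldots,s_i$, and conclude with Theorem~\ref{thm:tiling-interpretation}. Your alternative last-part recurrence is also valid, provided you set $C_m=0$ for $m<0$, but it only repeats the last-tile decomposition from that theorem's proof in composition language.
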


\begin{proof}
Read an RM-tiling from left to right and replace each tile by its length.
This produces a composition $(\lambda_1,\ldots,\lambda_m)$ of $n$. The endpoint of the $i$th tile is exactly $s_i$,  so the endpoint condition~\eqref{eq:endpoint-condition} becomes
\eqref{eq:composition-condition}. The construction is clearly reversible.
The result therefore follows from
Theorem~\ref{thm:tiling-interpretation}.
\end{proof}

This formulation highlights an unusual feature of the model. The
restriction on a part is determined not by the part number $i$ but by the
partial sum at which that part terminates. Thus the local rule is
periodic in the accumulated size of the composition.

Restricted compositions with periodic local conditions have been studied
in considerably broader settings; see, for example, Bender and Canfield
\cite{BenderCanfield2009}.  The restriction considered here is of a
different type.  It is not determined by the position of the part in the
composition, but by the partial sum at which that part terminates.
Thus the periodicity is attached to the accumulated size of the
composition rather than to its number of parts.

\subsection{A bijective explanation of doubling}

The algebraic identity $R_n^{(k)}=2R_{n-1}^{(k)}$ between reset positions of Lemma~\ref{lem:doubling} has a   simple bijective explanation.

In terms of the shifted sequence, the relevant condition is $n+1\not\equiv0\pmod{k}$. At such a position, $L_k(n)=L_k(n-1)+1$. 

\begin{theorem} 
\label{thm:binary-bijection}
Suppose $k\geq2$, $n\geq2$, and $n+1\not\equiv0\pmod{k}$.  Then there is a bijection $\mathcal{T}_n^{(k)}
\longleftrightarrow
\mathcal{T}_{n-1}^{(k)}\times\{0,1\}$.  Consequently, $T_n^{(k)}
=
2T_{n-1}^{(k)}$. 
\end{theorem}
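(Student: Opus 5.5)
The idea is to turn the algebraic proof of Lemma~\ref{lem:doubling} into a map on tilings. Since $n+1\not\equiv0\pmod{k}$, we have $L_k(n)=L_k(n-1)+1$. I would define a map $\Phi:\mathcal{T}_n^{(k)}\to\mathcal{T}_{n-1}^{(k)}\times\{0,1\}$ according to the length $j$ of the final tile of $\tau\in\mathcal{T}_n^{(k)}$. This tile ends at position $n$, so $1\le j\le L_k(n)$.

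\emph{Case $j=1$.} Delete the final monomino. What remains is an RM-tiling of length $n-1$, because no earlier endpoint changes. Send $\tau$ to the pair (that tiling, $0$).

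\emph{Case $2\le j\le L_k(n)$.} Shorten the final tile by one, so that it has length $j-1$ and ends at position $n-1$. The new tile is admissible because $1\le j-1\le L_k(n)-1=L_k(n-1)$. All other tiles are untouched, so the result lies in $\mathcal{T}_{n-1}^{(k)}$. Send $\tau$ to the pair (that tiling, $1$).

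Next I would check that $\Phi$ is bijective by writing down the inverse. Given $(\sigma,0)$, append a monomino at position $n$; this is allowed since $L_k(n)\ge 2\ge 1$. Given $(\sigma,1)$, where $\sigma$ is nonempty because $n-1\ge1$, lengthen the last tile of $\sigma$ by one. If that tile had length $i\le L_k(n-1)$, its new length $i+1\le L_k(n)$ is admissible at endpoint $n$, and its new length is at least $2$, so the two cases of $\Phi$ do not overlap. The two compositions are the identity: removing and re-appending a monomino cancel, and lengthening and shortening the last tile cancel. This gives $T_n^{(k)}=2T_{n-1}^{(k)}$.

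The main point needing care is the equality $L_k(n)=L_k(n-1)+1$, which is what makes the shortened and lengthened tiles admissible. Because $(n+1)\bmod k\ne0$, we have $(n+1)\bmod k=\bigl(n\bmod k\bigr)+1$ with no wraparound. I would also note that the hypothesis $n\ge2$ guarantees $\sigma$ is nonempty in the second case of the inverse. Both tilings are nonempty whenever $n-1\ge1$, so there are no degenerate empty-board cases to handle.
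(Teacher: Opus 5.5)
Your proposal is correct and follows the paper's proof essentially step for step. Both use $L_k(n)=L_k(n-1)+1$ and split according to whether the final tile is a monomino (delete it) or has length at least $2$ (shorten it by one). Both then invert by appending a monomino or lengthening the last tile. Your extra checks — the no-wraparound identity $(n+1)\bmod k=(n\bmod k)+1$, the nonemptiness of $\sigma$ for $n\ge2$, and the disjointness of the two preimages — only make explicit details the paper leaves implicit.
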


\begin{proof}
Since $n+1\not\equiv0\pmod{k}$,  the allowed maximum tile length increases by one, i.e., $L_k(n)=L_k(n-1)+1$.

Partition $\mathcal{T}_n^{(k)}$ according to the length of the final tile.

\noindent
\emph{Case 0: the final tile has length $1$.} Delete this final tile. The remaining object is an arbitrary tiling in
$\mathcal{T}_{n-1}^{(k)}$. Conversely, every tiling of length $n-1$ can be extended by appending a
tile of length $1$. Hence this class is in bijection with
$\mathcal{T}_{n-1}^{(k)}$.

\noindent
\emph{Case 1: the final tile has length at least $2$.} Suppose the final tile has length $j\geq2$. Shorten that tile by one
unit. The resulting final tile has length $j-1$ and ends at position
$n-1$. Since $j\leq L_k(n)=L_k(n-1)+1$,  we have $j-1\leq L_k(n-1)$,  so the resulting tiling is valid. Conversely, take any tiling of length $n-1$ and increase the length of
its final tile by one. If its original length is $h$, then $h\leq L_k(n-1)$, and hence $h+1\leq L_k(n)$,  so the resulting tiling of length $n$ is valid. Thus this second class is also in bijection with
$\mathcal{T}_{n-1}^{(k)}$.

The two classes are disjoint and exhaustive, giving $\mathcal{T}_n^{(k)}
\longleftrightarrow
\mathcal{T}_{n-1}^{(k)}\times\{0,1\}$.  Therefore $T_n^{(k)}=2T_{n-1}^{(k)}$. 
\end{proof}

The two binary choices have a direct combinatorial meaning:
\[
\begin{array}{ccl}
0 &:& \text{append a new tile of length $1$},\\[1mm]
1 &:& \text{extend the final tile by one unit}.
\end{array}
\]
Thus each non-reset step contributes an independent binary decision.

\subsection{The ternary transition across a reset}

A complete memory cycle contains one exceptional transition. It is more
natural combinatorially to combine the reset with the immediately
preceding step.

Suppose $m+1\equiv0\pmod{k}$.  Then the endpoint $m$ is the last position before the memory resets,
while endpoint $m+1$ allows only tile lengths $1$ and $2$.

The recurrence gives $T_{m+1}^{(k)}
=
T_m^{(k)}+T_{m-1}^{(k)}$.  By Theorem~\ref{thm:binary-bijection}, $T_m^{(k)}=2T_{m-1}^{(k)}$.  Consequently, $T_{m+1}^{(k)}
=
3T_{m-1}^{(k)}$.

This identity also has a direct decomposition.

\begin{proposition} 
\label{prop:ternary-reset}
At every reset, there is a bijection
\[
\mathcal{T}_{m+1}^{(k)}
\longleftrightarrow
\mathcal{T}_{m-1}^{(k)}\times\{0,1,2\}.
\]
\end{proposition}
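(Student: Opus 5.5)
We build the bijection by composing two decompositions already in hand. Throughout, $m+1\equiv 0\pmod k$, so $L_k(m+1)=2$, and $m\geq2$ so that Theorem~\ref{thm:binary-bijection} applies at $m$. The condition there is $m+1\not\equiv0\pmod k$ for the endpoint $m$. Since $(m+1)\bmod k=0$, we have $(m)+1\equiv 0$, so that hypothesis seems to fail. We therefore first recheck which endpoint actually doubles.

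Recall $L_k(n)=2+((n+1)\bmod k)$. At $n=m$ we get $(m+1)\bmod k=0$, so $L_k(m)=2$: the reset endpoint is $m$ itself, not $m+1$. The text before the proposition also uses $T_m=2T_{m-1}$ and $T_{m+1}=T_m+T_{m-1}$. These identities only hold together if the memory-$2$ endpoint is $m+1$ and the doubling endpoint is $m$. So we read the proposition, as the paper clearly intends, with the reset endpoint being $m+1$: $L_k(m+1)=2$ and $L_k(m)=L_k(m-1)+1$. Under the paper's convention this means $m+2\equiv0\pmod k$. We state this reading at the start and otherwise follow the paper's notation.

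The construction sorts a tiling $\tau\in\mathcal{T}_{m+1}^{(k)}$ by its final tile, which has length $1$ or $2$ since $L_k(m+1)=2$.

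\emph{Label $2$:} the final tile has length $2$. Deleting it leaves an arbitrary tiling in $\mathcal{T}_{m-1}^{(k)}$. Conversely, we may append a domino ending at $m+1$, which is admissible because $2\le L_k(m+1)$.

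\emph{Final tile of length $1$:} deleting it gives an arbitrary $\sigma\in\mathcal{T}_m^{(k)}$. We then apply the bijection of Theorem~\ref{thm:binary-bijection} at $n=m$, which is valid since $L_k(m)=L_k(m-1)+1$.
\begin{itemize}
\item Label $0$: $\sigma$ ends in a monomer; delete it to get a tiling of length $m-1$.
\item Label $1$: $\sigma$ ends in a tile of length at least $2$; shorten that tile by one to get a tiling of length $m-1$.
\end{itemize}

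Explicitly, label $0$ means $\tau$ ends in two monomers at $m$ and $m+1$. Label $1$ means $\tau$ ends in a monomer at $m+1$ preceded by a tile of length $j\ge2$ ending at $m$, which we shorten to length $j-1$ ending at $m-1$.

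The three classes are disjoint and exhaustive. Each map is invertible because the two inverse operations are admissible:
\begin{itemize}
\item appending a monomer at $m$ is always allowed;
\item extending a final tile of length $h\le L_k(m-1)$ to length $h+1\le L_k(m)$ is allowed;
\item a monomer at $m+1$ is always allowed.
\end{itemize}
Earlier endpoint conditions are untouched in every case, since only tiles ending at $m-1$, $m$ or $m+1$ change.

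The only real obstacle is the indexing convention just discussed. Once the reset endpoint is fixed so that $L_k(m+1)=2$ and $L_k(m)=L_k(m-1)+1$, every admissibility check is a single inequality, and counting gives $T_{m+1}^{(k)}=3T_{m-1}^{(k)}$.
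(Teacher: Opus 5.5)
Your proof is correct and matches the paper's argument: you sort by the final tile, send length $2$ to one copy of $\mathcal{T}_{m-1}^{(k)}$, and send length $1$ to $\mathcal{T}_m^{(k)}$, which Theorem~\ref{thm:binary-bijection} splits into two further copies. Your indexing remark is also right: with $L_k(n)=2+((n+1)\bmod k)$, the paper's stated hypothesis $m+1\equiv0\pmod k$ makes $m$ rather than $m+1$ the reset endpoint, so the argument actually needs $m+2\equiv0\pmod k$ together with $m\geq2$, which is precisely the reading you adopt.
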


\begin{proof}
Every tiling in $\mathcal{T}_{m+1}^{(k)}$ ends with a tile of length
either $1$ or $2$.

If the last tile has length $2$, delete it. This gives one copy of
$\mathcal{T}_{m-1}^{(k)}$.

If the last tile has length $1$, delete it. The result lies in
$\mathcal{T}_m^{(k)}$. By
Theorem~\ref{thm:binary-bijection},
\[
\mathcal{T}_m^{(k)}
\longleftrightarrow
\mathcal{T}_{m-1}^{(k)}\times\{0,1\}.
\]
Hence the class of tilings ending in a tile of length $1$ gives two
additional copies of $\mathcal{T}_{m-1}^{(k)}$.

Altogether,
\[
\mathcal{T}_{m+1}^{(k)}
\longleftrightarrow
\mathcal{T}_{m-1}^{(k)}
\times
\{0,1,2\}.\qedhere
\]
\end{proof}

This provides a combinatorial explanation for the factor $3$ appearing
in the period multiplier.

\subsection{A combinatorial proof of the period-collapse theorem}

Combining the binary and ternary bijections yields a purely combinatorial
proof of the factor $3\cdot2^{k-2}$.  

\begin{theorem} 
\label{thm:period-code}
After the initial transient, extending an RM-tiling through one complete
period is equivalent to choosing one ternary symbol and $k-2$ binary
symbols. More precisely, for every $k\ge2$ and $n\ge k-1$, there is a bijection
\[
\mathcal{T}_{n+k}^{(k)}
\longleftrightarrow
\mathcal{T}_n^{(k)}
\times
\{0,1,2\}
\times
\{0,1\}^{k-2}.
\]
Consequently,
\[
T_{n+k}^{(k)}
=
3\cdot2^{k-2}T_n^{(k)}.
\]
\end{theorem}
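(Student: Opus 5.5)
The plan is to compose the bijections of Theorem~\ref{thm:binary-bijection} and Proposition~\ref{prop:ternary-reset} along the $k$ consecutive endpoints $n+1,\dots,n+k$. Exactly one index $m$ in the range $n\le m\le n+k-1$ satisfies $m+1\equiv0\pmod k$. At that position the two steps $m-1\to m\to m+1$ are merged into the ternary bijection $\mathcal{T}_{m+1}^{(k)}\leftrightarrow\mathcal{T}_{m-1}^{(k)}\times\{0,1,2\}$. Every other step $s-1\to s$ with $s+1\not\equiv0\pmod k$ will be handled by the binary bijection. Counting steps, the $k$ unit steps from $n$ to $n+k$ split into one merged pair plus $k-2$ single binary steps, which yields one ternary symbol and $k-2$ binary symbols.

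The issue is where the merged pair sits. I will split into cases according to $m$. If $n+1\le m\le n+k-1$, then both $m-1$ and $m+1$ lie in $[n,n+k]$. The chain then reads $n\to\cdots\to m-1\Rightarrow m+1\to\cdots\to n+k$, and each single arrow is a binary step whose target $s$ satisfies $s+1\not\equiv0$. The remaining case is $m=n$, i.e.\ $n+1\equiv0\pmod k$. Here the reset lies at the very beginning, and the pair would straddle $n$. In this case I would instead use the ternary bijection at the end: since $n+k+1\equiv0$ as well, the natural merged pair is $(n+k-1,n+k+1)$, which overshoots. So I would handle $m=n$ directly with the last-tile decomposition. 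At endpoint $n+1$ only tiles of length $1$ or $2$ end, and $\mathcal{T}_{n+1}^{(k)}\leftrightarrow\mathcal{T}_n^{(k)}\sqcup\mathcal{T}_{n-1}^{(k)}$. That does not give a clean product with $\mathcal{T}_n^{(k)}$, so the more robust move is a shift: construct the bijection for $n+1$ (where $m=n+k$ lies in the interior shape) and conjugate. In fact $n\equiv -1$ means that $n-1\equiv-2$ is an ordinary step ($k\ge3$), and I would use the inverse of the binary bijection to write $\mathcal{T}_n\cong\mathcal{T}_{n-1}\times\{0,1\}$. For $k=2$ the statement is just the ternary proposition itself (with $k-2=0$), after reindexing. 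I will verify that these identifications compose into the claimed product. Alternatively, I can define the bijection abstractly as any bijection realizing the composite of cardinality-preserving maps.

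Next I will check the hypotheses. Theorem~\ref{thm:binary-bijection} needs the target index to be at least $2$, and Proposition~\ref{prop:ternary-reset} needs $m-1\ge0$ and the binary step at $m$ to be valid, i.e.\ $m\ge2$. The assumption $n\ge k-1$ guarantees that all targets are $\ge k\ge2$, except possibly in small cases like $k=2$, $n=1$, which I will check directly against Table~\ref{tab:RM-initial-values}. The cardinality identity $T_{n+k}^{(k)}=3\cdot2^{k-2}T_n^{(k)}$ then follows. It also follows independently from Theorem~\ref{thm:period-collapse} via $T_n=R_{n+1}$ and $n+1\ge k$. That independent route gives a safety net: even if the explicit composition for the boundary case $m=n$ is awkward, equal finite cardinalities guarantee that some bijection exists.

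The main obstacle is this boundary case $n+1\equiv0\pmod k$, where the reset pair does not fit inside the window. The cleanest fix I expect is the observation that the ternary bijection composed with binary ones can be reordered: cardinality multiplication commutes, so the bijection over the window $[n,n+k]$ can be built as the binary decomposition at $n+k$ ($T_{n+k}\cong T_{n+k-1}\times\{0,1\}$, valid since $n+k+1\equiv0$ means $n+k$ is itself a doubling step), followed by the remaining chain down to $n$, which now contains an interior reset at $n+1$ paired as $(n,n+2)$.
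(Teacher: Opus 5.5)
There is a genuine gap: an off-by-one in where the reset sits, and it breaks your repair of the boundary case. In the tiling indexing $L_k(s)=2+((s+1)\bmod k)$. So the endpoint admitting only lengths $1,2$ is the $s$ with $s+1\equiv0\pmod k$, and the hypothesis of Theorem~\ref{thm:binary-bijection} excludes exactly these endpoints.

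For your $m$ (with $m+1\equiv0$), the reset is $m$ itself, while endpoint $m+1$ admits length $3$. Hence the construction behind Proposition~\ref{prop:ternary-reset} (last tile of length $1$ or $2$, then a binary step at $m$) does not produce your pair $\mathcal{T}_{m+1}\leftrightarrow\mathcal{T}_{m-1}\times\{0,1,2\}$ (superscripts $(k)$ suppressed throughout). What it produces, for a reset endpoint $r\ge3$, is $\mathcal{T}_{r}\leftrightarrow\mathcal{T}_{r-2}\times\{0,1,2\}$. The sentence introducing that proposition in the paper carries the same shift, which is where your indexing comes from.

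More seriously, your repairs for $n+1\equiv0$ apply the binary bijection exactly at resets. Both $\mathcal{T}_n\cong\mathcal{T}_{n-1}\times\{0,1\}$ and $\mathcal{T}_{n+k}\cong\mathcal{T}_{n+k-1}\times\{0,1\}$ (``since $n+k+1\equiv0$'') violate that hypothesis, and the ternary pair $(n,n+2)$ fails as well. Concretely, for $k=3$, $n=5$: $T_5=12\neq2T_4=16$, $T_8=72\neq2T_7=96$, and $T_7=48\neq3T_5=36$. Only the product of the factors comes out right.

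Read with the correct indexing, your case split is sound, but the two constructions are swapped. Each of your interior pairs $(m-1,m+1)$ is a reset $m$ followed by one ordinary step. Such a pair is handled by running the binary bijection at $m-1$ backwards, which is legitimate since $m\not\equiv0$ and $m-1\ge2$:
\[
\mathcal{T}_{m+1}\leftrightarrow\mathcal{T}_{m}\times\{0,1\}\leftrightarrow\bigl(\mathcal{T}_{m-1}\sqcup\mathcal{T}_{m-2}\bigr)\times\{0,1\}\leftrightarrow\mathcal{T}_{m-1}\times\{0,1\}\,\sqcup\,\mathcal{T}_{m-1}\cong\mathcal{T}_{m-1}\times\{0,1,2\}.
\]
This device is exactly what is needed when the reset is the first endpoint $n+1$ of the window ($m=n+1$). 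That is the one case the paper's short proof passes over, by assuming that the pre-reset endpoint and the reset both lie in the window; you were right to flag a boundary issue.

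Your boundary case $n+1\equiv0$, by contrast, needs no trick. The window's reset is $n+k$ and its predecessor $n+k-1$ lies inside. The correctly indexed ternary bijection $\mathcal{T}_{n+k}\leftrightarrow\mathcal{T}_{n+k-2}\times\{0,1,2\}$, together with binary steps at $n+1,\dots,n+k-2$, finishes the argument.

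Your fallback through Theorem~\ref{thm:period-collapse} does show that \emph{some} bijection exists. However, it discards the combinatorial content the theorem is meant to supply, and it makes the final identity a restatement of that theorem.
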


\begin{proof}
During a complete period there are $k-2$ ordinary single step extensions
that are covered independently by
Theorem~\ref{thm:binary-bijection}. Each contributes a binary choice.

The remaining two consecutive positions consist of the final
pre-reset position and the reset itself. By
Proposition~\ref{prop:ternary-reset}, these two positions together
contribute one ternary choice.

Thus the set of possible extensions through a complete cycle is naturally
encoded by $\{0,1,2\}\times\{0,1\}^{k-2}$,  whose cardinality is $3\cdot2^{k-2}$.  Therefore $T_{n+k}^{(k)}
=
3\cdot2^{k-2}T_n^{(k)}$. 

Using $T_n^{(k)}=R_{n+1}^{(k)}$,  this recovers the period-collapse theorem combinatorially.
\end{proof}

Theorem~\ref{thm:period-code} explains why the multiplier
$3\cdot2^{k-2}$ is so simple. It is not merely an algebraic consequence
of cancellation in the recurrence: one complete rotating memory cycle is
combinatorially composed of one ternary decision and $k-2$ independent
binary decisions.

\section{The Period \texorpdfstring{$2$}{} Case and Monomer-dimer Tilings}
\label{sec:triangular-period2}

We now describe a natural geometric realization of the period-$2$
RM-Fibonacci sequence. Unlike the endpoint-periodic tiling model of
Section~\ref{sec:tilings}, the alternating recurrence is now forced by
the geometry of the underlying graph itself.

Let $G_n$ denote the triangular chain on $n$ vertices illustrated in
Figure~\ref{fig:triangular-chain-period2}. More precisely, $G_n$ has
vertex set $V(G_n)=\{v_1,v_2,\ldots,v_n\}$,  and edge set
\[
E(G_n)
=
\bigl\{\{v_i,v_{i+1}\}:1\leq i\leq n-1\bigr\}
\cup
\bigl\{\{v_{2j-1},v_{2j+1}\}:1\leq j\leq \lfloor (n-1)/2\rfloor\bigr\}.
\]
Thus $G_n$ is obtained by chaining triangles along a horizontal base.

\begin{figure}[htbp]
\centering

\tikzset{every picture/.style={line width=0.75pt}} %set default line width to 0.75pt        

\begin{tikzpicture}[x=0.75pt,y=0.75pt,yscale=-1,xscale=1]
%uncomment if require: \path (0,421); %set diagram left start at 0, and has height of 421

%Straight Lines [id:da4867081249853029] 
\draw [color={rgb, 255:red, 0; green, 0; blue, 0 }  ,draw opacity=1 ]   (44.97,188.5) -- (66.62,151) ;
\draw [shift={(66.62,151)}, rotate = 300] [color={rgb, 255:red, 0; green, 0; blue, 0 }  ,draw opacity=1 ][fill={rgb, 255:red, 0; green, 0; blue, 0 }  ,fill opacity=1 ][line width=0.75]      (0, 0) circle [x radius= 2.34, y radius= 2.34]   ;
\draw [shift={(44.97,188.5)}, rotate = 300] [color={rgb, 255:red, 0; green, 0; blue, 0 }  ,draw opacity=1 ][fill={rgb, 255:red, 0; green, 0; blue, 0 }  ,fill opacity=1 ][line width=0.75]      (0, 0) circle [x radius= 2.34, y radius= 2.34]   ;
%Straight Lines [id:da8075956158726064] 
\draw [color={rgb, 255:red, 0; green, 0; blue, 0 }  ,draw opacity=1 ]   (1.67,188.5) -- (44.97,188.5) ;
\draw [shift={(44.97,188.5)}, rotate = 0] [color={rgb, 255:red, 0; green, 0; blue, 0 }  ,draw opacity=1 ][fill={rgb, 255:red, 0; green, 0; blue, 0 }  ,fill opacity=1 ][line width=0.75]      (0, 0) circle [x radius= 2.34, y radius= 2.34]   ;
\draw [shift={(1.67,188.5)}, rotate = 0] [color={rgb, 255:red, 0; green, 0; blue, 0 }  ,draw opacity=1 ][fill={rgb, 255:red, 0; green, 0; blue, 0 }  ,fill opacity=1 ][line width=0.75]      (0, 0) circle [x radius= 2.34, y radius= 2.34]   ;
%Straight Lines [id:da9996511774921507] 
\draw [color={rgb, 255:red, 0; green, 0; blue, 0 }  ,draw opacity=1 ]   (1.67,188.5) -- (23.32,151) ;
\draw [shift={(23.32,151)}, rotate = 300] [color={rgb, 255:red, 0; green, 0; blue, 0 }  ,draw opacity=1 ][fill={rgb, 255:red, 0; green, 0; blue, 0 }  ,fill opacity=1 ][line width=0.75]      (0, 0) circle [x radius= 2.34, y radius= 2.34]   ;
\draw [shift={(1.67,188.5)}, rotate = 300] [color={rgb, 255:red, 0; green, 0; blue, 0 }  ,draw opacity=1 ][fill={rgb, 255:red, 0; green, 0; blue, 0 }  ,fill opacity=1 ][line width=0.75]      (0, 0) circle [x radius= 2.34, y radius= 2.34]   ;
%Straight Lines [id:da11162251158886638] 
\draw [color={rgb, 255:red, 0; green, 0; blue, 0 }  ,draw opacity=1 ]   (88.27,188.5) -- (44.97,188.5) ;
\draw [shift={(44.97,188.5)}, rotate = 180] [color={rgb, 255:red, 0; green, 0; blue, 0 }  ,draw opacity=1 ][fill={rgb, 255:red, 0; green, 0; blue, 0 }  ,fill opacity=1 ][line width=0.75]      (0, 0) circle [x radius= 2.34, y radius= 2.34]   ;
\draw [shift={(88.27,188.5)}, rotate = 180] [color={rgb, 255:red, 0; green, 0; blue, 0 }  ,draw opacity=1 ][fill={rgb, 255:red, 0; green, 0; blue, 0 }  ,fill opacity=1 ][line width=0.75]      (0, 0) circle [x radius= 2.34, y radius= 2.34]   ;
%Straight Lines [id:da001099028079044495] 
\draw [color={rgb, 255:red, 0; green, 0; blue, 0 }  ,draw opacity=1 ]   (88.27,188.5) -- (66.62,151) ;
\draw [shift={(66.62,151)}, rotate = 240] [color={rgb, 255:red, 0; green, 0; blue, 0 }  ,draw opacity=1 ][fill={rgb, 255:red, 0; green, 0; blue, 0 }  ,fill opacity=1 ][line width=0.75]      (0, 0) circle [x radius= 2.34, y radius= 2.34]   ;
\draw [shift={(88.27,188.5)}, rotate = 240] [color={rgb, 255:red, 0; green, 0; blue, 0 }  ,draw opacity=1 ][fill={rgb, 255:red, 0; green, 0; blue, 0 }  ,fill opacity=1 ][line width=0.75]      (0, 0) circle [x radius= 2.34, y radius= 2.34]   ;
%Straight Lines [id:da39992960248122433] 
\draw [color={rgb, 255:red, 0; green, 0; blue, 0 }  ,draw opacity=1 ]   (131.57,188.5) -- (109.92,151) ;
\draw [shift={(109.92,151)}, rotate = 240] [color={rgb, 255:red, 0; green, 0; blue, 0 }  ,draw opacity=1 ][fill={rgb, 255:red, 0; green, 0; blue, 0 }  ,fill opacity=1 ][line width=0.75]      (0, 0) circle [x radius= 2.34, y radius= 2.34]   ;
\draw [shift={(131.57,188.5)}, rotate = 240] [color={rgb, 255:red, 0; green, 0; blue, 0 }  ,draw opacity=1 ][fill={rgb, 255:red, 0; green, 0; blue, 0 }  ,fill opacity=1 ][line width=0.75]      (0, 0) circle [x radius= 2.34, y radius= 2.34]   ;
%Straight Lines [id:da8319949522940872] 
\draw [color={rgb, 255:red, 0; green, 0; blue, 0 }  ,draw opacity=1 ]   (131.57,188.5) -- (88.27,188.5) ;
\draw [shift={(88.27,188.5)}, rotate = 180] [color={rgb, 255:red, 0; green, 0; blue, 0 }  ,draw opacity=1 ][fill={rgb, 255:red, 0; green, 0; blue, 0 }  ,fill opacity=1 ][line width=0.75]      (0, 0) circle [x radius= 2.34, y radius= 2.34]   ;
\draw [shift={(131.57,188.5)}, rotate = 180] [color={rgb, 255:red, 0; green, 0; blue, 0 }  ,draw opacity=1 ][fill={rgb, 255:red, 0; green, 0; blue, 0 }  ,fill opacity=1 ][line width=0.75]      (0, 0) circle [x radius= 2.34, y radius= 2.34]   ;
%Straight Lines [id:da28482038147869826] 
\draw [color={rgb, 255:red, 0; green, 0; blue, 0 }  ,draw opacity=1 ]   (174.87,188.5) -- (131.57,188.5) ;
\draw [shift={(131.57,188.5)}, rotate = 180] [color={rgb, 255:red, 0; green, 0; blue, 0 }  ,draw opacity=1 ][fill={rgb, 255:red, 0; green, 0; blue, 0 }  ,fill opacity=1 ][line width=0.75]      (0, 0) circle [x radius= 2.34, y radius= 2.34]   ;
\draw [shift={(174.87,188.5)}, rotate = 180] [color={rgb, 255:red, 0; green, 0; blue, 0 }  ,draw opacity=1 ][fill={rgb, 255:red, 0; green, 0; blue, 0 }  ,fill opacity=1 ][line width=0.75]      (0, 0) circle [x radius= 2.34, y radius= 2.34]   ;
%Straight Lines [id:da9951680792710607] 
\draw [color={rgb, 255:red, 0; green, 0; blue, 0 }  ,draw opacity=1 ]   (131.57,188.5) -- (153.22,151) ;
\draw [shift={(153.22,151)}, rotate = 300] [color={rgb, 255:red, 0; green, 0; blue, 0 }  ,draw opacity=1 ][fill={rgb, 255:red, 0; green, 0; blue, 0 }  ,fill opacity=1 ][line width=0.75]      (0, 0) circle [x radius= 2.34, y radius= 2.34]   ;
\draw [shift={(131.57,188.5)}, rotate = 300] [color={rgb, 255:red, 0; green, 0; blue, 0 }  ,draw opacity=1 ][fill={rgb, 255:red, 0; green, 0; blue, 0 }  ,fill opacity=1 ][line width=0.75]      (0, 0) circle [x radius= 2.34, y radius= 2.34]   ;
%Straight Lines [id:da1735789435175462] 
\draw [color={rgb, 255:red, 0; green, 0; blue, 0 }  ,draw opacity=1 ]   (44.97,188.5) -- (23.32,151) ;
\draw [shift={(23.32,151)}, rotate = 240] [color={rgb, 255:red, 0; green, 0; blue, 0 }  ,draw opacity=1 ][fill={rgb, 255:red, 0; green, 0; blue, 0 }  ,fill opacity=1 ][line width=0.75]      (0, 0) circle [x radius= 2.34, y radius= 2.34]   ;
\draw [shift={(44.97,188.5)}, rotate = 240] [color={rgb, 255:red, 0; green, 0; blue, 0 }  ,draw opacity=1 ][fill={rgb, 255:red, 0; green, 0; blue, 0 }  ,fill opacity=1 ][line width=0.75]      (0, 0) circle [x radius= 2.34, y radius= 2.34]   ;
%Straight Lines [id:da2751049721330667] 
\draw [color={rgb, 255:red, 0; green, 0; blue, 0 }  ,draw opacity=1 ]   (174.87,188.5) -- (153.22,151) ;
\draw [shift={(153.22,151)}, rotate = 240] [color={rgb, 255:red, 0; green, 0; blue, 0 }  ,draw opacity=1 ][fill={rgb, 255:red, 0; green, 0; blue, 0 }  ,fill opacity=1 ][line width=0.75]      (0, 0) circle [x radius= 2.34, y radius= 2.34]   ;
\draw [shift={(174.87,188.5)}, rotate = 240] [color={rgb, 255:red, 0; green, 0; blue, 0 }  ,draw opacity=1 ][fill={rgb, 255:red, 0; green, 0; blue, 0 }  ,fill opacity=1 ][line width=0.75]      (0, 0) circle [x radius= 2.34, y radius= 2.34]   ;
%Straight Lines [id:da8167347756473102] 
\draw [color={rgb, 255:red, 0; green, 0; blue, 0 }  ,draw opacity=1 ]   (88.27,188.5) -- (109.92,151) ;
\draw [shift={(109.92,151)}, rotate = 300] [color={rgb, 255:red, 0; green, 0; blue, 0 }  ,draw opacity=1 ][fill={rgb, 255:red, 0; green, 0; blue, 0 }  ,fill opacity=1 ][line width=0.75]      (0, 0) circle [x radius= 2.34, y radius= 2.34]   ;
\draw [shift={(88.27,188.5)}, rotate = 300] [color={rgb, 255:red, 0; green, 0; blue, 0 }  ,draw opacity=1 ][fill={rgb, 255:red, 0; green, 0; blue, 0 }  ,fill opacity=1 ][line width=0.75]      (0, 0) circle [x radius= 2.34, y radius= 2.34]   ;

%Straight Lines [id:da7200623675302318] 
\draw [color={rgb, 255:red, 0; green, 0; blue, 0 }  ,draw opacity=1 ]   (43.3,59.18) -- (64.95,21.68) ;
\draw [shift={(64.95,21.68)}, rotate = 300] [color={rgb, 255:red, 0; green, 0; blue, 0 }  ,draw opacity=1 ][fill={rgb, 255:red, 0; green, 0; blue, 0 }  ,fill opacity=1 ][line width=0.75]      (0, 0) circle [x radius= 2.34, y radius= 2.34]   ;
\draw [shift={(43.3,59.18)}, rotate = 300] [color={rgb, 255:red, 0; green, 0; blue, 0 }  ,draw opacity=1 ][fill={rgb, 255:red, 0; green, 0; blue, 0 }  ,fill opacity=1 ][line width=0.75]      (0, 0) circle [x radius= 2.34, y radius= 2.34]   ;
%Straight Lines [id:da040371690168022] 
\draw [color={rgb, 255:red, 0; green, 0; blue, 0 }  ,draw opacity=1 ]   (0,59.18) -- (43.3,59.18) ;
\draw [shift={(43.3,59.18)}, rotate = 0] [color={rgb, 255:red, 0; green, 0; blue, 0 }  ,draw opacity=1 ][fill={rgb, 255:red, 0; green, 0; blue, 0 }  ,fill opacity=1 ][line width=0.75]      (0, 0) circle [x radius= 2.34, y radius= 2.34]   ;
\draw [shift={(0,59.18)}, rotate = 0] [color={rgb, 255:red, 0; green, 0; blue, 0 }  ,draw opacity=1 ][fill={rgb, 255:red, 0; green, 0; blue, 0 }  ,fill opacity=1 ][line width=0.75]      (0, 0) circle [x radius= 2.34, y radius= 2.34]   ;
%Straight Lines [id:da2575236694115206] 
\draw [color={rgb, 255:red, 0; green, 0; blue, 0 }  ,draw opacity=1 ]   (0,59.18) -- (21.65,21.68) ;
\draw [shift={(21.65,21.68)}, rotate = 300] [color={rgb, 255:red, 0; green, 0; blue, 0 }  ,draw opacity=1 ][fill={rgb, 255:red, 0; green, 0; blue, 0 }  ,fill opacity=1 ][line width=0.75]      (0, 0) circle [x radius= 2.34, y radius= 2.34]   ;
\draw [shift={(0,59.18)}, rotate = 300] [color={rgb, 255:red, 0; green, 0; blue, 0 }  ,draw opacity=1 ][fill={rgb, 255:red, 0; green, 0; blue, 0 }  ,fill opacity=1 ][line width=0.75]      (0, 0) circle [x radius= 2.34, y radius= 2.34]   ;
%Straight Lines [id:da871108197320448] 
\draw [color={rgb, 255:red, 0; green, 0; blue, 0 }  ,draw opacity=1 ]   (86.6,59.18) -- (43.3,59.18) ;
\draw [shift={(43.3,59.18)}, rotate = 180] [color={rgb, 255:red, 0; green, 0; blue, 0 }  ,draw opacity=1 ][fill={rgb, 255:red, 0; green, 0; blue, 0 }  ,fill opacity=1 ][line width=0.75]      (0, 0) circle [x radius= 2.34, y radius= 2.34]   ;
\draw [shift={(86.6,59.18)}, rotate = 180] [color={rgb, 255:red, 0; green, 0; blue, 0 }  ,draw opacity=1 ][fill={rgb, 255:red, 0; green, 0; blue, 0 }  ,fill opacity=1 ][line width=0.75]      (0, 0) circle [x radius= 2.34, y radius= 2.34]   ;
%Straight Lines [id:da7580118388071287] 
\draw [color={rgb, 255:red, 0; green, 0; blue, 0 }  ,draw opacity=1 ]   (86.6,59.18) -- (64.95,21.68) ;
\draw [shift={(64.95,21.68)}, rotate = 240] [color={rgb, 255:red, 0; green, 0; blue, 0 }  ,draw opacity=1 ][fill={rgb, 255:red, 0; green, 0; blue, 0 }  ,fill opacity=1 ][line width=0.75]      (0, 0) circle [x radius= 2.34, y radius= 2.34]   ;
\draw [shift={(86.6,59.18)}, rotate = 240] [color={rgb, 255:red, 0; green, 0; blue, 0 }  ,draw opacity=1 ][fill={rgb, 255:red, 0; green, 0; blue, 0 }  ,fill opacity=1 ][line width=0.75]      (0, 0) circle [x radius= 2.34, y radius= 2.34]   ;
%Straight Lines [id:da19311048143556975] 
\draw [color={rgb, 255:red, 0; green, 0; blue, 0 }  ,draw opacity=1 ]   (129.9,59.18) -- (108.25,21.68) ;
\draw [shift={(108.25,21.68)}, rotate = 240] [color={rgb, 255:red, 0; green, 0; blue, 0 }  ,draw opacity=1 ][fill={rgb, 255:red, 0; green, 0; blue, 0 }  ,fill opacity=1 ][line width=0.75]      (0, 0) circle [x radius= 2.34, y radius= 2.34]   ;
\draw [shift={(129.9,59.18)}, rotate = 240] [color={rgb, 255:red, 0; green, 0; blue, 0 }  ,draw opacity=1 ][fill={rgb, 255:red, 0; green, 0; blue, 0 }  ,fill opacity=1 ][line width=0.75]      (0, 0) circle [x radius= 2.34, y radius= 2.34]   ;
%Straight Lines [id:da8845023255899412] 
\draw [color={rgb, 255:red, 0; green, 0; blue, 0 }  ,draw opacity=1 ]   (129.9,59.18) -- (86.6,59.18) ;
\draw [shift={(86.6,59.18)}, rotate = 180] [color={rgb, 255:red, 0; green, 0; blue, 0 }  ,draw opacity=1 ][fill={rgb, 255:red, 0; green, 0; blue, 0 }  ,fill opacity=1 ][line width=0.75]      (0, 0) circle [x radius= 2.34, y radius= 2.34]   ;
\draw [shift={(129.9,59.18)}, rotate = 180] [color={rgb, 255:red, 0; green, 0; blue, 0 }  ,draw opacity=1 ][fill={rgb, 255:red, 0; green, 0; blue, 0 }  ,fill opacity=1 ][line width=0.75]      (0, 0) circle [x radius= 2.34, y radius= 2.34]   ;
%Straight Lines [id:da7768651322126675] 
\draw [color={rgb, 255:red, 0; green, 0; blue, 0 }  ,draw opacity=1 ]   (129.9,59.18) -- (151.55,21.68) ;
\draw [shift={(151.55,21.68)}, rotate = 300] [color={rgb, 255:red, 0; green, 0; blue, 0 }  ,draw opacity=1 ][fill={rgb, 255:red, 0; green, 0; blue, 0 }  ,fill opacity=1 ][line width=0.75]      (0, 0) circle [x radius= 2.34, y radius= 2.34]   ;
\draw [shift={(129.9,59.18)}, rotate = 300] [color={rgb, 255:red, 0; green, 0; blue, 0 }  ,draw opacity=1 ][fill={rgb, 255:red, 0; green, 0; blue, 0 }  ,fill opacity=1 ][line width=0.75]      (0, 0) circle [x radius= 2.34, y radius= 2.34]   ;
%Straight Lines [id:da09815390516983957] 
\draw [color={rgb, 255:red, 0; green, 0; blue, 0 }  ,draw opacity=1 ]   (43.3,59.18) -- (21.65,21.68) ;
\draw [shift={(21.65,21.68)}, rotate = 240] [color={rgb, 255:red, 0; green, 0; blue, 0 }  ,draw opacity=1 ][fill={rgb, 255:red, 0; green, 0; blue, 0 }  ,fill opacity=1 ][line width=0.75]      (0, 0) circle [x radius= 2.34, y radius= 2.34]   ;
\draw [shift={(43.3,59.18)}, rotate = 240] [color={rgb, 255:red, 0; green, 0; blue, 0 }  ,draw opacity=1 ][fill={rgb, 255:red, 0; green, 0; blue, 0 }  ,fill opacity=1 ][line width=0.75]      (0, 0) circle [x radius= 2.34, y radius= 2.34]   ;
%Straight Lines [id:da38425962669170477] 
\draw [color={rgb, 255:red, 0; green, 0; blue, 0 }  ,draw opacity=1 ]   (86.6,59.18) -- (108.25,21.68) ;
\draw [shift={(108.25,21.68)}, rotate = 300] [color={rgb, 255:red, 0; green, 0; blue, 0 }  ,draw opacity=1 ][fill={rgb, 255:red, 0; green, 0; blue, 0 }  ,fill opacity=1 ][line width=0.75]      (0, 0) circle [x radius= 2.34, y radius= 2.34]   ;
\draw [shift={(86.6,59.18)}, rotate = 300] [color={rgb, 255:red, 0; green, 0; blue, 0 }  ,draw opacity=1 ][fill={rgb, 255:red, 0; green, 0; blue, 0 }  ,fill opacity=1 ][line width=0.75]      (0, 0) circle [x radius= 2.34, y radius= 2.34]   ;

% Text Node
\draw (214.33,157.07) node [anchor=north west][inner sep=0.75pt]    {$n=1\bmod 2$};
% Text Node
\draw (214.33,37.73) node [anchor=north west][inner sep=0.75pt]    {$n=0\bmod 2$};
% Text Node
\draw (2,62.58) node [anchor=north west][inner sep=0.75pt]  [font=\scriptsize]  {$1$};
% Text Node
\draw (20.02,2.4) node [anchor=north west][inner sep=0.75pt]  [font=\scriptsize]  {$2$};
% Text Node
\draw (45.3,62.58) node [anchor=north west][inner sep=0.75pt]  [font=\scriptsize]  {$3$};
% Text Node
\draw (149.35,2.4) node [anchor=north west][inner sep=0.75pt]  [font=\scriptsize]  {$n$};
% Text Node
\draw (52.02,2.4) node [anchor=north west][inner sep=0.75pt]  [font=\scriptsize]  {$\cdots $};
% Text Node
\draw (118.9,62.58) node [anchor=north west][inner sep=0.75pt]  [font=\scriptsize]  {$n-1$};
% Text Node
\draw (99.35,2.4) node [anchor=north west][inner sep=0.75pt]  [font=\scriptsize]  {$n-2$};
% Text Node
\draw (75.6,62.58) node [anchor=north west][inner sep=0.75pt]  [font=\scriptsize]  {$n-3$};
% Text Node
\draw (172.35,195.22) node [anchor=north west][inner sep=0.75pt]  [font=\scriptsize]  {$n$};
% Text Node
\draw (141.9,132.4) node [anchor=north west][inner sep=0.75pt]  [font=\scriptsize]  {$n-1$};
% Text Node
\draw (124.35,195.22) node [anchor=north west][inner sep=0.75pt]  [font=\scriptsize]  {$n-2$};
% Text Node
\draw (98.6,132.4) node [anchor=north west][inner sep=0.75pt]  [font=\scriptsize]  {$n-3$};
% Text Node
\draw (81.02,195.22) node [anchor=north west][inner sep=0.75pt]  [font=\scriptsize]  {$\cdots $};
% Text Node
\draw (1,195.22) node [anchor=north west][inner sep=0.75pt]  [font=\scriptsize]  {$1$};
% Text Node
\draw (19.02,132.4) node [anchor=north west][inner sep=0.75pt]  [font=\scriptsize]  {$2$};
% Text Node
\draw (39.63,195.22) node [anchor=north west][inner sep=0.75pt]  [font=\scriptsize]  {$3$};
% Text Node
\draw (61.63,132.4) node [anchor=north west][inner sep=0.75pt]  [font=\scriptsize]  {$4$};

\end{tikzpicture}
\caption{A triangular-chain model for the period-$2$ RM-Fibonacci
sequence.}
\label{fig:triangular-chain-period2}
\end{figure}

A \emph{monomer-dimer tiling} of $G_n$ is a covering of its vertices by
pairwise disjoint monomers and dimers, where a dimer occupies an edge of
$G_n$. Equivalently, it is a matching of $G_n$, with unmatched vertices
viewed as monomers.

Let $a_n$ denote the number of monomer-dimer tilings of $G_n$ in which
the leftmost vertex $v_1$ is forced to be a monomer. The next proposition shows that $(a_n)$ satisfies exactly the period-$2$
RM-Fibonacci recurrence.

\begin{theorem} 
\label{thm:triangular-period2-recurrence}
The sequence $(a_n)_{n\geq1}$ satisfies the recurrence 
\[
a_1=1,\; a_2=1,\; a_3=2,\quad \forall n\geq4: a_n
=
\begin{cases}
a_{n-1}+a_{n-2},
& n\equiv0\pmod2,\\[1mm]
a_{n-1}+a_{n-2}+a_{n-3},
& n\equiv1\pmod2.
\end{cases} 
\] 
\end{theorem}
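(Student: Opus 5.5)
The plan is a last-vertex decomposition on $G_n$, analogous to the last-tile decomposition in the proof of Theorem~\ref{thm:tiling-interpretation}. The parity of $n$ determines the degree of $v_n$, and that is where the alternating order comes from. First I will record the neighbourhoods that follow from the definition of $E(G_n)$. If $n$ is even, $v_n$ is an apex whose only neighbour is $v_{n-1}$. If $n\geq3$ is odd, $v_n$ is a base vertex adjacent to both $v_{n-1}$ and $v_{n-2}$, since the edge $\{v_{2j-1},v_{2j+1}\}$ with $j=(n-1)/2$ is present. If $n\geq4$ is even, the apex $v_{n-1}$ is adjacent only to $v_{n-2}$ and $v_n$.

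Second, I will note that deleting the last $t$ vertices of $G_n$ leaves exactly $G_{n-t}$ as an induced subgraph. The edges among $v_1,\dots,v_{n-t}$ are the same whether we read them from $E(G_n)$ or from $E(G_{n-t})$, because $\lfloor (n-t-1)/2\rfloor$ counts precisely the triangle edges inside the first $n-t$ vertices. Consequently, a monomer--dimer tiling of $G_n$ restricted to the first $n-t$ vertices is a tiling of $G_{n-t}$, provided no dimer crosses the cut. The constraint that $v_1$ is a monomer is inherited, as long as the deleted dimers do not involve $v_1$.

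Third, I will classify the tilings counted by $a_n$ according to how $v_n$ is covered.

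For $n\geq4$ even, $v_n$ is either a monomer or covered by the dimer $v_{n-1}v_n$.
\begin{itemize}
\item If $v_n$ is a monomer, removing it leaves a tiling of $G_{n-1}$, giving $a_{n-1}$ tilings.
\item If $v_n$ lies in the dimer $v_{n-1}v_n$, removing both vertices leaves a tiling of $G_{n-2}$, giving $a_{n-2}$ tilings. This dimer avoids $v_1$ because $n-1\geq3$.
\end{itemize}
Together these give $a_n=a_{n-1}+a_{n-2}$.

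For $n\geq5$ odd, there are three cases.
\begin{itemize}
\item If $v_n$ is a monomer, we get $a_{n-1}$ tilings.
\item If $v_n$ lies in the dimer $v_{n-1}v_n$, we get $a_{n-2}$ tilings.
\item If $v_n$ lies in the dimer $v_{n-2}v_n$, then $v_{n-1}$ has both of its neighbours $v_{n-2},v_n$ already covered, so it is forced to be a monomer. Removing the last three vertices leaves $G_{n-3}$, giving $a_{n-3}$ tilings. This uses $n-2\geq3$, so the dimer does not touch $v_1$.
\end{itemize}
In each case the map is reversible: appending the monomer, or the dimer, or the dimer-plus-forced-monomer to any tiling of the smaller chain gives a valid tiling of $G_n$. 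This yields $a_n=a_{n-1}+a_{n-2}+a_{n-3}$.

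Finally, I will check the initial values directly. For $a_1=1$, $v_1$ is forced to be a monomer. For $a_2=1$, $v_2$ has no partner other than the forbidden $v_1$. For $a_3=2$, either $v_2v_3$ is a dimer or both $v_2,v_3$ are monomers. The recurrence starts at $n=4$, so only $a_1,a_2,a_3$ are ever referenced and no convention for $a_0$ is needed.

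The main obstacle is the bookkeeping at the start of the chain. The odd case needs $n\geq5$ so that the dimer $v_{n-2}v_n$ does not cover the forced monomer $v_1$. The induced-subgraph claim must also be checked carefully against the floor in the definition of $E(G_n)$. Once those are verified, the rest is routine.
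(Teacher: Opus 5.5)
Your proof is correct and follows the paper's argument: a last-vertex decomposition in which the parity of $n$ decides whether $v_n$ is an apex (one neighbour, two cases) or a base vertex (two neighbours, three cases, with $v_{n-1}$ forced to be a monomer in the third); your checks that deleting trailing vertices leaves the induced $G_{n-t}$ and that no removed dimer touches $v_1$ (hence odd $n\geq5$) make explicit what the paper treats as immediate. One small slip: the preliminary claim that the apex $v_{n-1}$ has only the neighbours $v_{n-2}$ and $v_n$ holds for odd $n\geq5$, not for even $n\geq4$ (where $v_{n-1}$ is a base vertex also adjacent to $v_{n-3}$), though you apply it correctly, in the odd case, where it is used.
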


\begin{proof}
The initial values are immediate. Now let $n\geq4$.

Suppose first that $n$ is even. Then the last vertex $v_n$ is the top
vertex of the final triangle and has only one neighbor, namely
$v_{n-1}$; see Figure~\ref{fig:triangular-chain-period2}. Hence there are
exactly two possibilities:

\begin{enumerate}
\item $v_n$ is a monomer. Removing $v_n$ leaves a tiling of $G_{n-1}$.

\item $v_n$ forms a dimer with $v_{n-1}$. Removing $v_{n-1}$ and $v_n$
leaves a tiling of $G_{n-2}$.
\end{enumerate}
Therefore $a_n=a_{n-1}+a_{n-2}$ with $n\equiv0\pmod2$.

Now suppose that $n$ is odd. Then the last vertex $v_n$ is the rightmost
base vertex and has two possible dimer partners, namely $v_{n-1}$ and
$v_{n-2}$; see again Figure~\ref{fig:triangular-chain-period2}. Hence
there are three possibilities:

\begin{enumerate}
\item $v_n$ is a monomer. Removing $v_n$ leaves a tiling of $G_{n-1}$.

\item $v_n$ forms a dimer with $v_{n-1}$. Removing $v_{n-1}$ and $v_n$
leaves a tiling of $G_{n-2}$.

\item $v_n$ forms a dimer with $v_{n-2}$. Then the vertex $v_{n-1}$ is
forced to be a monomer, and removing the last three vertices
$v_{n-2},v_{n-1},v_n$ leaves a tiling of $G_{n-3}$.
\end{enumerate}

Consequently, $a_n=a_{n-1}+a_{n-2}+a_{n-3}$ with $(n\equiv1\pmod2)$.
\end{proof}

We can now identify $(a_n)$ with the period-$2$ RM-Fibonacci sequence.

\begin{corollary}
\label{cor:triangular-period2-RM}
For every $n\geq1$, we have $a_n=R_n^{(2)}$. 
\end{corollary}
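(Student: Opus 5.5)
The plan is strong induction on $n$, comparing the recurrence of Theorem~\ref{thm:triangular-period2-recurrence} with Definition~\ref{def:RMF} for $k=2$.

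\emph{Base cases.} From Table~\ref{tab:RM-initial-values} we have $R_1^{(2)}=1$, $R_2^{(2)}=1$ and $R_3^{(2)}=2$. These agree with $a_1=1$, $a_2=1$, $a_3=2$ from Theorem~\ref{thm:triangular-period2-recurrence}. I would also recheck directly from Definition~\ref{def:RMF} that $R_2^{(2)}=R_1+R_0=1$ and $R_3^{(2)}=R_2+R_1+R_0=2$.

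\emph{Inductive step.} Let $n\geq4$. For $k=2$ the memory length is $\ell_2(n)=2+(n\bmod 2)$. This equals $2$ when $n$ is even and $3$ when $n$ is odd, so
\[
R_n^{(2)}=R_{n-1}^{(2)}+R_{n-2}^{(2)}\ (n \text{ even}),\qquad
R_n^{(2)}=R_{n-1}^{(2)}+R_{n-2}^{(2)}+R_{n-3}^{(2)}\ (n \text{ odd}).
\]
This is exactly the case split in Theorem~\ref{thm:triangular-period2-recurrence}, with the same parity convention. Every index involved satisfies $n-3\geq1$, so the induction hypothesis gives $a_{n-j}=R_{n-j}^{(2)}$ for $j=1,2,3$. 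Substituting yields $a_n=R_n^{(2)}$.

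The only point needing care is the parity alignment. The geometric parity (whether $v_n$ is a top vertex or a base vertex) must match the residue $n\bmod 2$ in the RM definition without any index shift. This holds because $v_1$ is a base vertex, so odd-indexed vertices lie on the base. I would also make sure the induction starts at $n=4$, so that no term $a_0$ is ever needed. This avoids the mismatch that $a_0$ is not defined in the tiling model while $R_0^{(2)}=0$.
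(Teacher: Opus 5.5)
Your proof is correct and is the same argument the paper compresses into ``by direct identification'': the values $a_1,a_2,a_3$ agree with $R_1^{(2)},R_2^{(2)},R_3^{(2)}$, and for $n\geq4$ the parity-split recurrence of Theorem~\ref{thm:triangular-period2-recurrence} coincides with Definition~\ref{def:RMF} at $k=2$, so strong induction finishes. Your observation that the recurrence is only invoked for $n\geq4$, so that $a_0$ is never needed, is a worthwhile detail that the paper leaves implicit.
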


\begin{proof}
By direct identification. 
\end{proof}

\begin{remark}
If the leftmost vertex is not constrained, then the number of all
monomer-dimer tilings of $G_n$ is equal to $2R_n^{(2)}$ for every
$n\geq2$. Indeed, the vertex $v_1$ may be either a monomer or part of the
dimer $\{v_1,v_2\}$, and these two possibilities reduce respectively to a
tiling of $G_n$ with $v_1$ forced to be a monomer and to such a tiling of
$G_{n-1}$.
\end{remark}

The sequence obtained here is already recorded in the OEIS. More
precisely, $R_n^{(2)}=\seqnum{A038754}(n-2)$ for $n\geq2$. The OEIS entry for \seqnum{A038754} records several algebraic,
number-theoretic, and walk interpretations of the sequence, but does not
include the monomer--dimer interpretation on the triangular chain
developed above. Thus the present construction provides an additional
combinatorial realization of \seqnum{A038754}.

\section{The period-\texorpdfstring{$3$}{} Case and Double-Hexagon Tilings}
\label{sec:hexagonal}

The period-$3$ case admits a second natural geometric realization.
Recall that $A_n^{(3)}=R_{n+1}^{(3)}$,  whose first terms are
\[
1,1,2,4,8,12,24,48,72,144,288,432,\ldots.
\]
This is OEIS sequence \seqnum{A354541}, which is recorded as the number
of tilings of a double-hexagon strip of $n$ hexagonal cells by single
and double hexagons. We now derive this interpretation directly from
the geometry of the strip.

Let $\mathcal{H}_n$ denote the set of such tilings and write $H_n:=|\mathcal{H}_n|$. A single tile covers one hexagonal cell, whereas a double tile covers
two cells sharing an edge. We set $H_0=1$ and $H_m=0$ for all $m<0$. 

Figure~\ref{fig:period3-hexagon-cases} illustrates the three possible
terminal geometries of the strip according to the residue of $n$
modulo $3$. These three configurations are responsible for the
periodic recurrence satisfied by $(H_n)$.

\begin{figure}[htbp]
\centering

% \tikzset{every picture/.style={line width=0.75pt}} %set default line width to 0.75pt        

\begin{tikzpicture}[x=0.75pt,y=0.75pt,yscale=-1,xscale=1]
%uncomment if require: \path (0,370); %set diagram left start at 0, and has height of 370

%Shape: Regular Polygon [id:dp9452661618642304] 
\draw  [fill={rgb, 255:red, 255; green, 255; blue, 255 }  ,fill opacity=1 ] (200,163.3) -- (187.5,184.95) -- (162.5,184.95) -- (150,163.3) -- (162.5,141.65) -- (187.5,141.65) -- cycle ;
%Shape: Regular Polygon [id:dp6283629860146436] 
\draw  [fill={rgb, 255:red, 255; green, 255; blue, 255 }  ,fill opacity=1 ] (50,163.3) -- (37.5,184.95) -- (12.5,184.95) -- (0,163.3) -- (12.5,141.65) -- (37.5,141.65) -- cycle ;
%Shape: Regular Polygon [id:dp9081432917484995] 
\draw  [fill={rgb, 255:red, 255; green, 255; blue, 255 }  ,fill opacity=1 ] (87.5,184.95) -- (75,206.6) -- (50,206.6) -- (37.5,184.95) -- (50,163.3) -- (75,163.3) -- cycle ;
%Shape: Regular Polygon [id:dp42806794043435203] 
\draw  [fill={rgb, 255:red, 255; green, 255; blue, 255 }  ,fill opacity=1 ] (87.5,141.65) -- (75,163.3) -- (50,163.3) -- (37.5,141.65) -- (50,120) -- (75,120) -- cycle ;
%Shape: Regular Polygon [id:dp43817383896840845] 
\draw  [fill={rgb, 255:red, 255; green, 255; blue, 255 }  ,fill opacity=1 ] (125,163.3) -- (112.5,184.95) -- (87.5,184.95) -- (75,163.3) -- (87.5,141.65) -- (112.5,141.65) -- cycle ;
%Shape: Regular Polygon [id:dp7359256598703617] 
\draw  [fill={rgb, 255:red, 255; green, 255; blue, 255 }  ,fill opacity=1 ] (162.5,184.95) -- (150,206.6) -- (125,206.6) -- (112.5,184.95) -- (125,163.3) -- (150,163.3) -- cycle ;
%Shape: Regular Polygon [id:dp10309429922432778] 
\draw  [fill={rgb, 255:red, 255; green, 255; blue, 255 }  ,fill opacity=1 ] (162.5,141.65) -- (150,163.3) -- (125,163.3) -- (112.5,141.65) -- (125,120) -- (150,120) -- cycle ;
%Shape: Regular Polygon [id:dp23718008900293874] 
\draw  [fill={rgb, 255:red, 255; green, 255; blue, 255 }  ,fill opacity=1 ] (237.5,141.65) -- (225,163.3) -- (200,163.3) -- (187.5,141.65) -- (200,120) -- (225,120) -- cycle ;
%Shape: Regular Polygon [id:dp9550249091458938] 
\draw  [fill={rgb, 255:red, 255; green, 255; blue, 255 }  ,fill opacity=1 ] (237.5,184.95) -- (225,206.6) -- (200,206.6) -- (187.5,184.95) -- (200,163.3) -- (225,163.3) -- cycle ;
%Shape: Regular Polygon [id:dp9176404080328591] 
\draw  [fill={rgb, 255:red, 255; green, 255; blue, 255 }  ,fill opacity=1 ] (275,163.3) -- (262.5,184.95) -- (237.5,184.95) -- (225,163.3) -- (237.5,141.65) -- (262.5,141.65) -- cycle ;

%Shape: Regular Polygon [id:dp6889708445063969] 
\draw  [fill={rgb, 255:red, 255; green, 255; blue, 255 }  ,fill opacity=1 ] (200,43.3) -- (187.5,64.95) -- (162.5,64.95) -- (150,43.3) -- (162.5,21.65) -- (187.5,21.65) -- cycle ;
%Shape: Regular Polygon [id:dp9021751969307163] 
\draw  [fill={rgb, 255:red, 255; green, 255; blue, 255 }  ,fill opacity=1 ] (50,43.3) -- (37.5,64.95) -- (12.5,64.95) -- (0,43.3) -- (12.5,21.65) -- (37.5,21.65) -- cycle ;
%Shape: Regular Polygon [id:dp5707350654645807] 
\draw  [fill={rgb, 255:red, 255; green, 255; blue, 255 }  ,fill opacity=1 ] (87.5,64.95) -- (75,86.6) -- (50,86.6) -- (37.5,64.95) -- (50,43.3) -- (75,43.3) -- cycle ;
%Shape: Regular Polygon [id:dp8285618917265746] 
\draw  [fill={rgb, 255:red, 255; green, 255; blue, 255 }  ,fill opacity=1 ] (87.5,21.65) -- (75,43.3) -- (50,43.3) -- (37.5,21.65) -- (50,0) -- (75,0) -- cycle ;
%Shape: Regular Polygon [id:dp9895659911564147] 
\draw  [fill={rgb, 255:red, 255; green, 255; blue, 255 }  ,fill opacity=1 ] (125,43.3) -- (112.5,64.95) -- (87.5,64.95) -- (75,43.3) -- (87.5,21.65) -- (112.5,21.65) -- cycle ;
%Shape: Regular Polygon [id:dp26192761906322726] 
\draw  [fill={rgb, 255:red, 255; green, 255; blue, 255 }  ,fill opacity=1 ] (162.5,64.95) -- (150,86.6) -- (125,86.6) -- (112.5,64.95) -- (125,43.3) -- (150,43.3) -- cycle ;
%Shape: Regular Polygon [id:dp45449173270438714] 
\draw  [fill={rgb, 255:red, 255; green, 255; blue, 255 }  ,fill opacity=1 ] (162.5,21.65) -- (150,43.3) -- (125,43.3) -- (112.5,21.65) -- (125,0) -- (150,0) -- cycle ;
%Shape: Regular Polygon [id:dp06307831688330456] 
\draw  [fill={rgb, 255:red, 255; green, 255; blue, 255 }  ,fill opacity=1 ] (237.5,21.65) -- (225,43.3) -- (200,43.3) -- (187.5,21.65) -- (200,0) -- (225,0) -- cycle ;
%Shape: Regular Polygon [id:dp4957656198370056] 
\draw  [fill={rgb, 255:red, 255; green, 255; blue, 255 }  ,fill opacity=1 ] (237.5,64.95) -- (225,86.6) -- (200,86.6) -- (187.5,64.95) -- (200,43.3) -- (225,43.3) -- cycle ;

%Shape: Regular Polygon [id:dp5944814154059312] 
\draw  [fill={rgb, 255:red, 255; green, 255; blue, 255 }  ,fill opacity=1 ] (200,283.3) -- (187.5,304.95) -- (162.5,304.95) -- (150,283.3) -- (162.5,261.65) -- (187.5,261.65) -- cycle ;
%Shape: Regular Polygon [id:dp10604891668686656] 
\draw  [fill={rgb, 255:red, 255; green, 255; blue, 255 }  ,fill opacity=1 ] (50,283.3) -- (37.5,304.95) -- (12.5,304.95) -- (0,283.3) -- (12.5,261.65) -- (37.5,261.65) -- cycle ;
%Shape: Regular Polygon [id:dp20542671685618097] 
\draw  [fill={rgb, 255:red, 255; green, 255; blue, 255 }  ,fill opacity=1 ] (87.5,304.95) -- (75,326.6) -- (50,326.6) -- (37.5,304.95) -- (50,283.3) -- (75,283.3) -- cycle ;
%Shape: Regular Polygon [id:dp32790961387459894] 
\draw  [fill={rgb, 255:red, 255; green, 255; blue, 255 }  ,fill opacity=1 ] (87.5,261.65) -- (75,283.3) -- (50,283.3) -- (37.5,261.65) -- (50,240) -- (75,240) -- cycle ;
%Shape: Regular Polygon [id:dp5787781536903209] 
\draw  [fill={rgb, 255:red, 255; green, 255; blue, 255 }  ,fill opacity=1 ] (125,283.3) -- (112.5,304.95) -- (87.5,304.95) -- (75,283.3) -- (87.5,261.65) -- (112.5,261.65) -- cycle ;
%Shape: Regular Polygon [id:dp03872509535848123] 
\draw  [fill={rgb, 255:red, 255; green, 255; blue, 255 }  ,fill opacity=1 ] (162.5,304.95) -- (150,326.6) -- (125,326.6) -- (112.5,304.95) -- (125,283.3) -- (150,283.3) -- cycle ;
%Shape: Regular Polygon [id:dp3930177608915617] 
\draw  [fill={rgb, 255:red, 255; green, 255; blue, 255 }  ,fill opacity=1 ] (162.5,261.65) -- (150,283.3) -- (125,283.3) -- (112.5,261.65) -- (125,240) -- (150,240) -- cycle ;
%Shape: Regular Polygon [id:dp015358793869362564] 
\draw  [fill={rgb, 255:red, 255; green, 255; blue, 255 }  ,fill opacity=1 ] (237.5,261.65) -- (225,283.3) -- (200,283.3) -- (187.5,261.65) -- (200,240) -- (225,240) -- cycle ;

%Straight Lines [id:da8503349012382275] 
\draw [color={rgb, 255:red, 208; green, 2; blue, 27 }  ,draw opacity=0.25 ]   (25,163.3) -- (62.5,141.65) ;
\draw [shift={(62.5,141.65)}, rotate = 330] [color={rgb, 255:red, 208; green, 2; blue, 27 }  ,draw opacity=0.25 ][fill={rgb, 255:red, 208; green, 2; blue, 27 }  ,fill opacity=0.25 ][line width=0.75]      (0, 0) circle [x radius= 2.34, y radius= 2.34]   ;
\draw [shift={(25,163.3)}, rotate = 330] [color={rgb, 255:red, 208; green, 2; blue, 27 }  ,draw opacity=0.25 ][fill={rgb, 255:red, 208; green, 2; blue, 27 }  ,fill opacity=0.25 ][line width=0.75]      (0, 0) circle [x radius= 2.34, y radius= 2.34]   ;
%Straight Lines [id:da0725134891120166] 
\draw [color={rgb, 255:red, 208; green, 2; blue, 27 }  ,draw opacity=0.25 ]   (62.5,184.95) -- (62.5,170.57) -- (62.5,141.65) ;
\draw [shift={(62.5,141.65)}, rotate = 270] [color={rgb, 255:red, 208; green, 2; blue, 27 }  ,draw opacity=0.25 ][fill={rgb, 255:red, 208; green, 2; blue, 27 }  ,fill opacity=0.25 ][line width=0.75]      (0, 0) circle [x radius= 2.34, y radius= 2.34]   ;
\draw [shift={(62.5,184.95)}, rotate = 270] [color={rgb, 255:red, 208; green, 2; blue, 27 }  ,draw opacity=0.25 ][fill={rgb, 255:red, 208; green, 2; blue, 27 }  ,fill opacity=0.25 ][line width=0.75]      (0, 0) circle [x radius= 2.34, y radius= 2.34]   ;
%Straight Lines [id:da9418474007424354] 
\draw [color={rgb, 255:red, 208; green, 2; blue, 27 }  ,draw opacity=0.25 ]   (62.5,184.95) -- (25,163.3) ;
\draw [shift={(25,163.3)}, rotate = 210] [color={rgb, 255:red, 208; green, 2; blue, 27 }  ,draw opacity=0.25 ][fill={rgb, 255:red, 208; green, 2; blue, 27 }  ,fill opacity=0.25 ][line width=0.75]      (0, 0) circle [x radius= 2.34, y radius= 2.34]   ;
\draw [shift={(62.5,184.95)}, rotate = 210] [color={rgb, 255:red, 208; green, 2; blue, 27 }  ,draw opacity=0.25 ][fill={rgb, 255:red, 208; green, 2; blue, 27 }  ,fill opacity=0.25 ][line width=0.75]      (0, 0) circle [x radius= 2.34, y radius= 2.34]   ;
%Straight Lines [id:da6656141514953978] 
\draw [color={rgb, 255:red, 208; green, 2; blue, 27 }  ,draw opacity=0.25 ]   (100,163.3) -- (62.5,141.65) ;
\draw [shift={(62.5,141.65)}, rotate = 210] [color={rgb, 255:red, 208; green, 2; blue, 27 }  ,draw opacity=0.25 ][fill={rgb, 255:red, 208; green, 2; blue, 27 }  ,fill opacity=0.25 ][line width=0.75]      (0, 0) circle [x radius= 2.34, y radius= 2.34]   ;
\draw [shift={(100,163.3)}, rotate = 210] [color={rgb, 255:red, 208; green, 2; blue, 27 }  ,draw opacity=0.25 ][fill={rgb, 255:red, 208; green, 2; blue, 27 }  ,fill opacity=0.25 ][line width=0.75]      (0, 0) circle [x radius= 2.34, y radius= 2.34]   ;
%Straight Lines [id:da36502135121383084] 
\draw [color={rgb, 255:red, 208; green, 2; blue, 27 }  ,draw opacity=0.25 ]   (175,163.3) -- (137.5,141.65) ;
\draw [shift={(137.5,141.65)}, rotate = 210] [color={rgb, 255:red, 208; green, 2; blue, 27 }  ,draw opacity=0.25 ][fill={rgb, 255:red, 208; green, 2; blue, 27 }  ,fill opacity=0.25 ][line width=0.75]      (0, 0) circle [x radius= 2.34, y radius= 2.34]   ;
\draw [shift={(175,163.3)}, rotate = 210] [color={rgb, 255:red, 208; green, 2; blue, 27 }  ,draw opacity=0.25 ][fill={rgb, 255:red, 208; green, 2; blue, 27 }  ,fill opacity=0.25 ][line width=0.75]      (0, 0) circle [x radius= 2.34, y radius= 2.34]   ;
%Straight Lines [id:da8801565563143302] 
\draw [color={rgb, 255:red, 208; green, 2; blue, 27 }  ,draw opacity=0.25 ]   (137.5,184.95) -- (100,163.3) ;
\draw [shift={(100,163.3)}, rotate = 210] [color={rgb, 255:red, 208; green, 2; blue, 27 }  ,draw opacity=0.25 ][fill={rgb, 255:red, 208; green, 2; blue, 27 }  ,fill opacity=0.25 ][line width=0.75]      (0, 0) circle [x radius= 2.34, y radius= 2.34]   ;
\draw [shift={(137.5,184.95)}, rotate = 210] [color={rgb, 255:red, 208; green, 2; blue, 27 }  ,draw opacity=0.25 ][fill={rgb, 255:red, 208; green, 2; blue, 27 }  ,fill opacity=0.25 ][line width=0.75]      (0, 0) circle [x radius= 2.34, y radius= 2.34]   ;
%Straight Lines [id:da746412701469881] 
\draw [color={rgb, 255:red, 208; green, 2; blue, 27 }  ,draw opacity=0.25 ]   (62.5,184.95) -- (100,163.3) ;
\draw [shift={(100,163.3)}, rotate = 330] [color={rgb, 255:red, 208; green, 2; blue, 27 }  ,draw opacity=0.25 ][fill={rgb, 255:red, 208; green, 2; blue, 27 }  ,fill opacity=0.25 ][line width=0.75]      (0, 0) circle [x radius= 2.34, y radius= 2.34]   ;
\draw [shift={(62.5,184.95)}, rotate = 330] [color={rgb, 255:red, 208; green, 2; blue, 27 }  ,draw opacity=0.25 ][fill={rgb, 255:red, 208; green, 2; blue, 27 }  ,fill opacity=0.25 ][line width=0.75]      (0, 0) circle [x radius= 2.34, y radius= 2.34]   ;
%Straight Lines [id:da5445448545788899] 
\draw [color={rgb, 255:red, 208; green, 2; blue, 27 }  ,draw opacity=0.25 ]   (100,163.3) -- (137.5,141.65) ;
\draw [shift={(137.5,141.65)}, rotate = 330] [color={rgb, 255:red, 208; green, 2; blue, 27 }  ,draw opacity=0.25 ][fill={rgb, 255:red, 208; green, 2; blue, 27 }  ,fill opacity=0.25 ][line width=0.75]      (0, 0) circle [x radius= 2.34, y radius= 2.34]   ;
\draw [shift={(100,163.3)}, rotate = 330] [color={rgb, 255:red, 208; green, 2; blue, 27 }  ,draw opacity=0.25 ][fill={rgb, 255:red, 208; green, 2; blue, 27 }  ,fill opacity=0.25 ][line width=0.75]      (0, 0) circle [x radius= 2.34, y radius= 2.34]   ;
%Straight Lines [id:da23244031601668635] 
\draw [color={rgb, 255:red, 208; green, 2; blue, 27 }  ,draw opacity=0.25 ]   (137.5,184.95) -- (175,163.3) ;
\draw [shift={(175,163.3)}, rotate = 330] [color={rgb, 255:red, 208; green, 2; blue, 27 }  ,draw opacity=0.25 ][fill={rgb, 255:red, 208; green, 2; blue, 27 }  ,fill opacity=0.25 ][line width=0.75]      (0, 0) circle [x radius= 2.34, y radius= 2.34]   ;
\draw [shift={(137.5,184.95)}, rotate = 330] [color={rgb, 255:red, 208; green, 2; blue, 27 }  ,draw opacity=0.25 ][fill={rgb, 255:red, 208; green, 2; blue, 27 }  ,fill opacity=0.25 ][line width=0.75]      (0, 0) circle [x radius= 2.34, y radius= 2.34]   ;
%Straight Lines [id:da977557322786548] 
\draw [color={rgb, 255:red, 208; green, 2; blue, 27 }  ,draw opacity=0.25 ]   (212.5,184.95) -- (250,163.3) ;
\draw [shift={(250,163.3)}, rotate = 330] [color={rgb, 255:red, 208; green, 2; blue, 27 }  ,draw opacity=0.25 ][fill={rgb, 255:red, 208; green, 2; blue, 27 }  ,fill opacity=0.25 ][line width=0.75]      (0, 0) circle [x radius= 2.34, y radius= 2.34]   ;
\draw [shift={(212.5,184.95)}, rotate = 330] [color={rgb, 255:red, 208; green, 2; blue, 27 }  ,draw opacity=0.25 ][fill={rgb, 255:red, 208; green, 2; blue, 27 }  ,fill opacity=0.25 ][line width=0.75]      (0, 0) circle [x radius= 2.34, y radius= 2.34]   ;
%Straight Lines [id:da6312233594737089] 
\draw [color={rgb, 255:red, 208; green, 2; blue, 27 }  ,draw opacity=0.25 ]   (175,163.3) -- (212.5,141.65) ;
\draw [shift={(212.5,141.65)}, rotate = 330] [color={rgb, 255:red, 208; green, 2; blue, 27 }  ,draw opacity=0.25 ][fill={rgb, 255:red, 208; green, 2; blue, 27 }  ,fill opacity=0.25 ][line width=0.75]      (0, 0) circle [x radius= 2.34, y radius= 2.34]   ;
\draw [shift={(175,163.3)}, rotate = 330] [color={rgb, 255:red, 208; green, 2; blue, 27 }  ,draw opacity=0.25 ][fill={rgb, 255:red, 208; green, 2; blue, 27 }  ,fill opacity=0.25 ][line width=0.75]      (0, 0) circle [x radius= 2.34, y radius= 2.34]   ;
%Straight Lines [id:da22283491480348394] 
\draw [color={rgb, 255:red, 208; green, 2; blue, 27 }  ,draw opacity=0.25 ]   (250,163.3) -- (212.5,141.65) ;
\draw [shift={(212.5,141.65)}, rotate = 210] [color={rgb, 255:red, 208; green, 2; blue, 27 }  ,draw opacity=0.25 ][fill={rgb, 255:red, 208; green, 2; blue, 27 }  ,fill opacity=0.25 ][line width=0.75]      (0, 0) circle [x radius= 2.34, y radius= 2.34]   ;
\draw [shift={(250,163.3)}, rotate = 210] [color={rgb, 255:red, 208; green, 2; blue, 27 }  ,draw opacity=0.25 ][fill={rgb, 255:red, 208; green, 2; blue, 27 }  ,fill opacity=0.25 ][line width=0.75]      (0, 0) circle [x radius= 2.34, y radius= 2.34]   ;
%Straight Lines [id:da9014774713759409] 
\draw [color={rgb, 255:red, 208; green, 2; blue, 27 }  ,draw opacity=0.25 ]   (212.5,184.95) -- (175,163.3) ;
\draw [shift={(175,163.3)}, rotate = 210] [color={rgb, 255:red, 208; green, 2; blue, 27 }  ,draw opacity=0.25 ][fill={rgb, 255:red, 208; green, 2; blue, 27 }  ,fill opacity=0.25 ][line width=0.75]      (0, 0) circle [x radius= 2.34, y radius= 2.34]   ;
\draw [shift={(212.5,184.95)}, rotate = 210] [color={rgb, 255:red, 208; green, 2; blue, 27 }  ,draw opacity=0.25 ][fill={rgb, 255:red, 208; green, 2; blue, 27 }  ,fill opacity=0.25 ][line width=0.75]      (0, 0) circle [x radius= 2.34, y radius= 2.34]   ;
%Straight Lines [id:da9315979840338656] 
\draw [color={rgb, 255:red, 208; green, 2; blue, 27 }  ,draw opacity=0.25 ]   (137.5,184.95) -- (137.5,141.65) ;
\draw [shift={(137.5,141.65)}, rotate = 270] [color={rgb, 255:red, 208; green, 2; blue, 27 }  ,draw opacity=0.25 ][fill={rgb, 255:red, 208; green, 2; blue, 27 }  ,fill opacity=0.25 ][line width=0.75]      (0, 0) circle [x radius= 2.34, y radius= 2.34]   ;
\draw [shift={(137.5,184.95)}, rotate = 270] [color={rgb, 255:red, 208; green, 2; blue, 27 }  ,draw opacity=0.25 ][fill={rgb, 255:red, 208; green, 2; blue, 27 }  ,fill opacity=0.25 ][line width=0.75]      (0, 0) circle [x radius= 2.34, y radius= 2.34]   ;
%Straight Lines [id:da6363462403439906] 
\draw [color={rgb, 255:red, 208; green, 2; blue, 27 }  ,draw opacity=0.25 ]   (212.5,184.95) -- (212.5,141.65) ;
\draw [shift={(212.5,141.65)}, rotate = 270] [color={rgb, 255:red, 208; green, 2; blue, 27 }  ,draw opacity=0.25 ][fill={rgb, 255:red, 208; green, 2; blue, 27 }  ,fill opacity=0.25 ][line width=0.75]      (0, 0) circle [x radius= 2.34, y radius= 2.34]   ;
\draw [shift={(212.5,184.95)}, rotate = 270] [color={rgb, 255:red, 208; green, 2; blue, 27 }  ,draw opacity=0.25 ][fill={rgb, 255:red, 208; green, 2; blue, 27 }  ,fill opacity=0.25 ][line width=0.75]      (0, 0) circle [x radius= 2.34, y radius= 2.34]   ;
%Straight Lines [id:da6519737223189372] 
\draw [color={rgb, 255:red, 208; green, 2; blue, 27 }  ,draw opacity=0.25 ]   (25,43.3) -- (62.5,21.65) ;
\draw [shift={(62.5,21.65)}, rotate = 330] [color={rgb, 255:red, 208; green, 2; blue, 27 }  ,draw opacity=0.25 ][fill={rgb, 255:red, 208; green, 2; blue, 27 }  ,fill opacity=0.25 ][line width=0.75]      (0, 0) circle [x radius= 2.34, y radius= 2.34]   ;
\draw [shift={(25,43.3)}, rotate = 330] [color={rgb, 255:red, 208; green, 2; blue, 27 }  ,draw opacity=0.25 ][fill={rgb, 255:red, 208; green, 2; blue, 27 }  ,fill opacity=0.25 ][line width=0.75]      (0, 0) circle [x radius= 2.34, y radius= 2.34]   ;
%Straight Lines [id:da5839494652237217] 
\draw [color={rgb, 255:red, 208; green, 2; blue, 27 }  ,draw opacity=0.25 ]   (62.5,64.95) -- (62.5,21.65) ;
\draw [shift={(62.5,21.65)}, rotate = 270] [color={rgb, 255:red, 208; green, 2; blue, 27 }  ,draw opacity=0.25 ][fill={rgb, 255:red, 208; green, 2; blue, 27 }  ,fill opacity=0.25 ][line width=0.75]      (0, 0) circle [x radius= 2.34, y radius= 2.34]   ;
\draw [shift={(62.5,64.95)}, rotate = 270] [color={rgb, 255:red, 208; green, 2; blue, 27 }  ,draw opacity=0.25 ][fill={rgb, 255:red, 208; green, 2; blue, 27 }  ,fill opacity=0.25 ][line width=0.75]      (0, 0) circle [x radius= 2.34, y radius= 2.34]   ;
%Straight Lines [id:da9453593645349916] 
\draw [color={rgb, 255:red, 208; green, 2; blue, 27 }  ,draw opacity=0.25 ]   (62.5,64.95) -- (25,43.3) ;
\draw [shift={(25,43.3)}, rotate = 210] [color={rgb, 255:red, 208; green, 2; blue, 27 }  ,draw opacity=0.25 ][fill={rgb, 255:red, 208; green, 2; blue, 27 }  ,fill opacity=0.25 ][line width=0.75]      (0, 0) circle [x radius= 2.34, y radius= 2.34]   ;
\draw [shift={(62.5,64.95)}, rotate = 210] [color={rgb, 255:red, 208; green, 2; blue, 27 }  ,draw opacity=0.25 ][fill={rgb, 255:red, 208; green, 2; blue, 27 }  ,fill opacity=0.25 ][line width=0.75]      (0, 0) circle [x radius= 2.34, y radius= 2.34]   ;
%Straight Lines [id:da4558346817969249] 
\draw [color={rgb, 255:red, 208; green, 2; blue, 27 }  ,draw opacity=0.25 ]   (100,43.3) -- (62.5,21.65) ;
\draw [shift={(62.5,21.65)}, rotate = 210] [color={rgb, 255:red, 208; green, 2; blue, 27 }  ,draw opacity=0.25 ][fill={rgb, 255:red, 208; green, 2; blue, 27 }  ,fill opacity=0.25 ][line width=0.75]      (0, 0) circle [x radius= 2.34, y radius= 2.34]   ;
\draw [shift={(100,43.3)}, rotate = 210] [color={rgb, 255:red, 208; green, 2; blue, 27 }  ,draw opacity=0.25 ][fill={rgb, 255:red, 208; green, 2; blue, 27 }  ,fill opacity=0.25 ][line width=0.75]      (0, 0) circle [x radius= 2.34, y radius= 2.34]   ;
%Straight Lines [id:da6269164484579078] 
\draw [color={rgb, 255:red, 208; green, 2; blue, 27 }  ,draw opacity=0.25 ]   (175,43.3) -- (137.5,21.65) ;
\draw [shift={(137.5,21.65)}, rotate = 210] [color={rgb, 255:red, 208; green, 2; blue, 27 }  ,draw opacity=0.25 ][fill={rgb, 255:red, 208; green, 2; blue, 27 }  ,fill opacity=0.25 ][line width=0.75]      (0, 0) circle [x radius= 2.34, y radius= 2.34]   ;
\draw [shift={(175,43.3)}, rotate = 210] [color={rgb, 255:red, 208; green, 2; blue, 27 }  ,draw opacity=0.25 ][fill={rgb, 255:red, 208; green, 2; blue, 27 }  ,fill opacity=0.25 ][line width=0.75]      (0, 0) circle [x radius= 2.34, y radius= 2.34]   ;
%Straight Lines [id:da712800387373282] 
\draw [color={rgb, 255:red, 208; green, 2; blue, 27 }  ,draw opacity=0.25 ]   (137.5,64.95) -- (100,43.3) ;
\draw [shift={(100,43.3)}, rotate = 210] [color={rgb, 255:red, 208; green, 2; blue, 27 }  ,draw opacity=0.25 ][fill={rgb, 255:red, 208; green, 2; blue, 27 }  ,fill opacity=0.25 ][line width=0.75]      (0, 0) circle [x radius= 2.34, y radius= 2.34]   ;
\draw [shift={(137.5,64.95)}, rotate = 210] [color={rgb, 255:red, 208; green, 2; blue, 27 }  ,draw opacity=0.25 ][fill={rgb, 255:red, 208; green, 2; blue, 27 }  ,fill opacity=0.25 ][line width=0.75]      (0, 0) circle [x radius= 2.34, y radius= 2.34]   ;
%Straight Lines [id:da7082247442254679] 
\draw [color={rgb, 255:red, 208; green, 2; blue, 27 }  ,draw opacity=0.25 ]   (62.5,64.95) -- (100,43.3) ;
\draw [shift={(100,43.3)}, rotate = 330] [color={rgb, 255:red, 208; green, 2; blue, 27 }  ,draw opacity=0.25 ][fill={rgb, 255:red, 208; green, 2; blue, 27 }  ,fill opacity=0.25 ][line width=0.75]      (0, 0) circle [x radius= 2.34, y radius= 2.34]   ;
\draw [shift={(62.5,64.95)}, rotate = 330] [color={rgb, 255:red, 208; green, 2; blue, 27 }  ,draw opacity=0.25 ][fill={rgb, 255:red, 208; green, 2; blue, 27 }  ,fill opacity=0.25 ][line width=0.75]      (0, 0) circle [x radius= 2.34, y radius= 2.34]   ;
%Straight Lines [id:da34046201735600723] 
\draw [color={rgb, 255:red, 208; green, 2; blue, 27 }  ,draw opacity=0.25 ]   (100,43.3) -- (137.5,21.65) ;
\draw [shift={(137.5,21.65)}, rotate = 330] [color={rgb, 255:red, 208; green, 2; blue, 27 }  ,draw opacity=0.25 ][fill={rgb, 255:red, 208; green, 2; blue, 27 }  ,fill opacity=0.25 ][line width=0.75]      (0, 0) circle [x radius= 2.34, y radius= 2.34]   ;
\draw [shift={(100,43.3)}, rotate = 330] [color={rgb, 255:red, 208; green, 2; blue, 27 }  ,draw opacity=0.25 ][fill={rgb, 255:red, 208; green, 2; blue, 27 }  ,fill opacity=0.25 ][line width=0.75]      (0, 0) circle [x radius= 2.34, y radius= 2.34]   ;
%Straight Lines [id:da3854836205953882] 
\draw [color={rgb, 255:red, 208; green, 2; blue, 27 }  ,draw opacity=0.25 ]   (137.5,64.95) -- (175,43.3) ;
\draw [shift={(175,43.3)}, rotate = 330] [color={rgb, 255:red, 208; green, 2; blue, 27 }  ,draw opacity=0.25 ][fill={rgb, 255:red, 208; green, 2; blue, 27 }  ,fill opacity=0.25 ][line width=0.75]      (0, 0) circle [x radius= 2.34, y radius= 2.34]   ;
\draw [shift={(137.5,64.95)}, rotate = 330] [color={rgb, 255:red, 208; green, 2; blue, 27 }  ,draw opacity=0.25 ][fill={rgb, 255:red, 208; green, 2; blue, 27 }  ,fill opacity=0.25 ][line width=0.75]      (0, 0) circle [x radius= 2.34, y radius= 2.34]   ;
%Straight Lines [id:da2964799142638742] 
\draw [color={rgb, 255:red, 208; green, 2; blue, 27 }  ,draw opacity=0.25 ]   (175,43.3) -- (212.5,21.65) ;
\draw [shift={(212.5,21.65)}, rotate = 330] [color={rgb, 255:red, 208; green, 2; blue, 27 }  ,draw opacity=0.25 ][fill={rgb, 255:red, 208; green, 2; blue, 27 }  ,fill opacity=0.25 ][line width=0.75]      (0, 0) circle [x radius= 2.34, y radius= 2.34]   ;
\draw [shift={(175,43.3)}, rotate = 330] [color={rgb, 255:red, 208; green, 2; blue, 27 }  ,draw opacity=0.25 ][fill={rgb, 255:red, 208; green, 2; blue, 27 }  ,fill opacity=0.25 ][line width=0.75]      (0, 0) circle [x radius= 2.34, y radius= 2.34]   ;
%Straight Lines [id:da5866887885257475] 
\draw [color={rgb, 255:red, 208; green, 2; blue, 27 }  ,draw opacity=0.25 ]   (212.5,64.95) -- (175,43.3) ;
\draw [shift={(175,43.3)}, rotate = 210] [color={rgb, 255:red, 208; green, 2; blue, 27 }  ,draw opacity=0.25 ][fill={rgb, 255:red, 208; green, 2; blue, 27 }  ,fill opacity=0.25 ][line width=0.75]      (0, 0) circle [x radius= 2.34, y radius= 2.34]   ;
\draw [shift={(212.5,64.95)}, rotate = 210] [color={rgb, 255:red, 208; green, 2; blue, 27 }  ,draw opacity=0.25 ][fill={rgb, 255:red, 208; green, 2; blue, 27 }  ,fill opacity=0.25 ][line width=0.75]      (0, 0) circle [x radius= 2.34, y radius= 2.34]   ;
%Straight Lines [id:da5075797720387306] 
\draw [color={rgb, 255:red, 208; green, 2; blue, 27 }  ,draw opacity=0.25 ]   (137.5,64.95) -- (137.5,21.65) ;
\draw [shift={(137.5,21.65)}, rotate = 270] [color={rgb, 255:red, 208; green, 2; blue, 27 }  ,draw opacity=0.25 ][fill={rgb, 255:red, 208; green, 2; blue, 27 }  ,fill opacity=0.25 ][line width=0.75]      (0, 0) circle [x radius= 2.34, y radius= 2.34]   ;
\draw [shift={(137.5,64.95)}, rotate = 270] [color={rgb, 255:red, 208; green, 2; blue, 27 }  ,draw opacity=0.25 ][fill={rgb, 255:red, 208; green, 2; blue, 27 }  ,fill opacity=0.25 ][line width=0.75]      (0, 0) circle [x radius= 2.34, y radius= 2.34]   ;
%Straight Lines [id:da6555575060566394] 
\draw [color={rgb, 255:red, 208; green, 2; blue, 27 }  ,draw opacity=0.25 ]   (212.5,64.95) -- (212.5,21.65) ;
\draw [shift={(212.5,21.65)}, rotate = 270] [color={rgb, 255:red, 208; green, 2; blue, 27 }  ,draw opacity=0.25 ][fill={rgb, 255:red, 208; green, 2; blue, 27 }  ,fill opacity=0.25 ][line width=0.75]      (0, 0) circle [x radius= 2.34, y radius= 2.34]   ;
\draw [shift={(212.5,64.95)}, rotate = 270] [color={rgb, 255:red, 208; green, 2; blue, 27 }  ,draw opacity=0.25 ][fill={rgb, 255:red, 208; green, 2; blue, 27 }  ,fill opacity=0.25 ][line width=0.75]      (0, 0) circle [x radius= 2.34, y radius= 2.34]   ;
%Straight Lines [id:da5483772680453416] 
\draw [color={rgb, 255:red, 208; green, 2; blue, 27 }  ,draw opacity=0.25 ]   (25,283.3) -- (62.5,261.65) ;
\draw [shift={(62.5,261.65)}, rotate = 330] [color={rgb, 255:red, 208; green, 2; blue, 27 }  ,draw opacity=0.25 ][fill={rgb, 255:red, 208; green, 2; blue, 27 }  ,fill opacity=0.25 ][line width=0.75]      (0, 0) circle [x radius= 2.34, y radius= 2.34]   ;
\draw [shift={(25,283.3)}, rotate = 330] [color={rgb, 255:red, 208; green, 2; blue, 27 }  ,draw opacity=0.25 ][fill={rgb, 255:red, 208; green, 2; blue, 27 }  ,fill opacity=0.25 ][line width=0.75]      (0, 0) circle [x radius= 2.34, y radius= 2.34]   ;
%Straight Lines [id:da4313313623926065] 
\draw [color={rgb, 255:red, 208; green, 2; blue, 27 }  ,draw opacity=0.25 ]   (62.5,304.95) -- (62.5,290.57) -- (62.5,261.65) ;
\draw [shift={(62.5,261.65)}, rotate = 270] [color={rgb, 255:red, 208; green, 2; blue, 27 }  ,draw opacity=0.25 ][fill={rgb, 255:red, 208; green, 2; blue, 27 }  ,fill opacity=0.25 ][line width=0.75]      (0, 0) circle [x radius= 2.34, y radius= 2.34]   ;
\draw [shift={(62.5,304.95)}, rotate = 270] [color={rgb, 255:red, 208; green, 2; blue, 27 }  ,draw opacity=0.25 ][fill={rgb, 255:red, 208; green, 2; blue, 27 }  ,fill opacity=0.25 ][line width=0.75]      (0, 0) circle [x radius= 2.34, y radius= 2.34]   ;
%Straight Lines [id:da32070015115560035] 
\draw [color={rgb, 255:red, 208; green, 2; blue, 27 }  ,draw opacity=0.25 ]   (62.5,304.95) -- (25,283.3) ;
\draw [shift={(25,283.3)}, rotate = 210] [color={rgb, 255:red, 208; green, 2; blue, 27 }  ,draw opacity=0.25 ][fill={rgb, 255:red, 208; green, 2; blue, 27 }  ,fill opacity=0.25 ][line width=0.75]      (0, 0) circle [x radius= 2.34, y radius= 2.34]   ;
\draw [shift={(62.5,304.95)}, rotate = 210] [color={rgb, 255:red, 208; green, 2; blue, 27 }  ,draw opacity=0.25 ][fill={rgb, 255:red, 208; green, 2; blue, 27 }  ,fill opacity=0.25 ][line width=0.75]      (0, 0) circle [x radius= 2.34, y radius= 2.34]   ;
%Straight Lines [id:da6719047431777027] 
\draw [color={rgb, 255:red, 208; green, 2; blue, 27 }  ,draw opacity=0.25 ]   (100,283.3) -- (62.5,261.65) ;
\draw [shift={(62.5,261.65)}, rotate = 210] [color={rgb, 255:red, 208; green, 2; blue, 27 }  ,draw opacity=0.25 ][fill={rgb, 255:red, 208; green, 2; blue, 27 }  ,fill opacity=0.25 ][line width=0.75]      (0, 0) circle [x radius= 2.34, y radius= 2.34]   ;
\draw [shift={(100,283.3)}, rotate = 210] [color={rgb, 255:red, 208; green, 2; blue, 27 }  ,draw opacity=0.25 ][fill={rgb, 255:red, 208; green, 2; blue, 27 }  ,fill opacity=0.25 ][line width=0.75]      (0, 0) circle [x radius= 2.34, y radius= 2.34]   ;
%Straight Lines [id:da21636225506917095] 
\draw [color={rgb, 255:red, 208; green, 2; blue, 27 }  ,draw opacity=0.25 ]   (175,283.3) -- (137.5,261.65) ;
\draw [shift={(137.5,261.65)}, rotate = 210] [color={rgb, 255:red, 208; green, 2; blue, 27 }  ,draw opacity=0.25 ][fill={rgb, 255:red, 208; green, 2; blue, 27 }  ,fill opacity=0.25 ][line width=0.75]      (0, 0) circle [x radius= 2.34, y radius= 2.34]   ;
\draw [shift={(175,283.3)}, rotate = 210] [color={rgb, 255:red, 208; green, 2; blue, 27 }  ,draw opacity=0.25 ][fill={rgb, 255:red, 208; green, 2; blue, 27 }  ,fill opacity=0.25 ][line width=0.75]      (0, 0) circle [x radius= 2.34, y radius= 2.34]   ;
%Straight Lines [id:da0492646834002477] 
\draw [color={rgb, 255:red, 208; green, 2; blue, 27 }  ,draw opacity=0.25 ]   (137.5,304.95) -- (100,283.3) ;
\draw [shift={(100,283.3)}, rotate = 210] [color={rgb, 255:red, 208; green, 2; blue, 27 }  ,draw opacity=0.25 ][fill={rgb, 255:red, 208; green, 2; blue, 27 }  ,fill opacity=0.25 ][line width=0.75]      (0, 0) circle [x radius= 2.34, y radius= 2.34]   ;
\draw [shift={(137.5,304.95)}, rotate = 210] [color={rgb, 255:red, 208; green, 2; blue, 27 }  ,draw opacity=0.25 ][fill={rgb, 255:red, 208; green, 2; blue, 27 }  ,fill opacity=0.25 ][line width=0.75]      (0, 0) circle [x radius= 2.34, y radius= 2.34]   ;
%Straight Lines [id:da9743557223333937] 
\draw [color={rgb, 255:red, 208; green, 2; blue, 27 }  ,draw opacity=0.25 ]   (62.5,304.95) -- (100,283.3) ;
\draw [shift={(100,283.3)}, rotate = 330] [color={rgb, 255:red, 208; green, 2; blue, 27 }  ,draw opacity=0.25 ][fill={rgb, 255:red, 208; green, 2; blue, 27 }  ,fill opacity=0.25 ][line width=0.75]      (0, 0) circle [x radius= 2.34, y radius= 2.34]   ;
\draw [shift={(62.5,304.95)}, rotate = 330] [color={rgb, 255:red, 208; green, 2; blue, 27 }  ,draw opacity=0.25 ][fill={rgb, 255:red, 208; green, 2; blue, 27 }  ,fill opacity=0.25 ][line width=0.75]      (0, 0) circle [x radius= 2.34, y radius= 2.34]   ;
%Straight Lines [id:da8078005780379889] 
\draw [color={rgb, 255:red, 208; green, 2; blue, 27 }  ,draw opacity=0.25 ]   (100,283.3) -- (137.5,261.65) ;
\draw [shift={(137.5,261.65)}, rotate = 330] [color={rgb, 255:red, 208; green, 2; blue, 27 }  ,draw opacity=0.25 ][fill={rgb, 255:red, 208; green, 2; blue, 27 }  ,fill opacity=0.25 ][line width=0.75]      (0, 0) circle [x radius= 2.34, y radius= 2.34]   ;
\draw [shift={(100,283.3)}, rotate = 330] [color={rgb, 255:red, 208; green, 2; blue, 27 }  ,draw opacity=0.25 ][fill={rgb, 255:red, 208; green, 2; blue, 27 }  ,fill opacity=0.25 ][line width=0.75]      (0, 0) circle [x radius= 2.34, y radius= 2.34]   ;
%Straight Lines [id:da04090246411668641] 
\draw [color={rgb, 255:red, 208; green, 2; blue, 27 }  ,draw opacity=0.25 ]   (137.5,304.95) -- (175,283.3) ;
\draw [shift={(175,283.3)}, rotate = 330] [color={rgb, 255:red, 208; green, 2; blue, 27 }  ,draw opacity=0.25 ][fill={rgb, 255:red, 208; green, 2; blue, 27 }  ,fill opacity=0.25 ][line width=0.75]      (0, 0) circle [x radius= 2.34, y radius= 2.34]   ;
\draw [shift={(137.5,304.95)}, rotate = 330] [color={rgb, 255:red, 208; green, 2; blue, 27 }  ,draw opacity=0.25 ][fill={rgb, 255:red, 208; green, 2; blue, 27 }  ,fill opacity=0.25 ][line width=0.75]      (0, 0) circle [x radius= 2.34, y radius= 2.34]   ;
%Straight Lines [id:da6250040381396814] 
\draw [color={rgb, 255:red, 208; green, 2; blue, 27 }  ,draw opacity=0.25 ]   (175,283.3) -- (212.5,261.65) ;
\draw [shift={(212.5,261.65)}, rotate = 330] [color={rgb, 255:red, 208; green, 2; blue, 27 }  ,draw opacity=0.25 ][fill={rgb, 255:red, 208; green, 2; blue, 27 }  ,fill opacity=0.25 ][line width=0.75]      (0, 0) circle [x radius= 2.34, y radius= 2.34]   ;
\draw [shift={(175,283.3)}, rotate = 330] [color={rgb, 255:red, 208; green, 2; blue, 27 }  ,draw opacity=0.25 ][fill={rgb, 255:red, 208; green, 2; blue, 27 }  ,fill opacity=0.25 ][line width=0.75]      (0, 0) circle [x radius= 2.34, y radius= 2.34]   ;
%Straight Lines [id:da0805941761428367] 
\draw [color={rgb, 255:red, 208; green, 2; blue, 27 }  ,draw opacity=0.25 ]   (137.5,304.95) -- (137.5,261.65) ;
\draw [shift={(137.5,261.65)}, rotate = 270] [color={rgb, 255:red, 208; green, 2; blue, 27 }  ,draw opacity=0.25 ][fill={rgb, 255:red, 208; green, 2; blue, 27 }  ,fill opacity=0.25 ][line width=0.75]      (0, 0) circle [x radius= 2.34, y radius= 2.34]   ;
\draw [shift={(137.5,304.95)}, rotate = 270] [color={rgb, 255:red, 208; green, 2; blue, 27 }  ,draw opacity=0.25 ][fill={rgb, 255:red, 208; green, 2; blue, 27 }  ,fill opacity=0.25 ][line width=0.75]      (0, 0) circle [x radius= 2.34, y radius= 2.34]   ;

% Text Node
\draw (301,153.4) node [anchor=north west][inner sep=0.75pt]    {\footnotesize$n\bmod 3=1$};
% Text Node
\draw (301,33.4) node [anchor=north west][inner sep=0.75pt]    {\footnotesize$n\bmod 3=0$};
% Text Node
\draw (301,273.4) node [anchor=north west][inner sep=0.75pt]    {\footnotesize$n\bmod 3=2$};
% Text Node
\draw (20.67,36.58) node [anchor=north west][inner sep=0.75pt]    {\footnotesize$1$};
% Text Node
\draw (58,11.91) node [anchor=north west][inner sep=0.75pt]    {\footnotesize$2$};
% Text Node
\draw (56.67,57.91) node [anchor=north west][inner sep=0.75pt]    {\footnotesize$3$};
% Text Node
\draw (92.67,31.91) node [anchor=north west][inner sep=0.75pt]    {\footnotesize$4$};
% Text Node
\draw (132.67,15.25) node [anchor=north west][inner sep=0.75pt]    {\footnotesize$5$};
% Text Node
\draw (127,59) node [anchor=north west][inner sep=0.75pt]    {\footnotesize$\cdots$};
% Text Node
\draw (206.33,57) node [anchor=north west][inner sep=0.75pt]    {\footnotesize$n$};
% Text Node
\draw (199.33,14.55) node [anchor=north west][inner sep=0.75pt]    {\footnotesize$n-1$};
% Text Node
\draw (156.33,37.55) node [anchor=north west][inner sep=0.75pt]    {\footnotesize$n-2$};
% Text Node
\draw (20.67,156.58) node [anchor=north west][inner sep=0.75pt]    {\footnotesize$1$};
% Text Node
\draw (58,131.91) node [anchor=north west][inner sep=0.75pt]    {\footnotesize$2$};
% Text Node
\draw (56.67,177.91) node [anchor=north west][inner sep=0.75pt]    {\footnotesize$3$};
% Text Node
\draw (92.67,151.91) node [anchor=north west][inner sep=0.75pt]    {\footnotesize$4$};
% Text Node
\draw (132.67,135.25) node [anchor=north west][inner sep=0.75pt]    {\footnotesize$5$};
% Text Node
\draw (137.33,177.91) node [anchor=north west][inner sep=0.75pt]    {\footnotesize$6$};
% Text Node
\draw (165,157) node [anchor=north west][inner sep=0.75pt]    {\footnotesize$\cdots$};
% Text Node
\draw (243.33,155) node [anchor=north west][inner sep=0.75pt]    {\footnotesize$n$};
% Text Node
\draw (195.33,176.55) node [anchor=north west][inner sep=0.75pt]    {\footnotesize$n-1$};
% Text Node
\draw (195.33,132.55) node [anchor=north west][inner sep=0.75pt]    {\footnotesize$n-2$};
% Text Node
\draw (20.67,276.58) node [anchor=north west][inner sep=0.75pt]    {\footnotesize$1$};
% Text Node
\draw (58,251.91) node [anchor=north west][inner sep=0.75pt]    {\footnotesize$2$};
% Text Node
\draw (56.67,297.91) node [anchor=north west][inner sep=0.75pt]    {\footnotesize$3$};
% Text Node
\draw (92.67,271.91) node [anchor=north west][inner sep=0.75pt]    {\footnotesize$4$};
% Text Node
\draw (127.67,251.25) node [anchor=north west][inner sep=0.75pt]    {\footnotesize$\cdots$};
% Text Node
\draw (206.33,255) node [anchor=north west][inner sep=0.75pt]    {\footnotesize$n$};
% Text Node
\draw (158.33,276.55) node [anchor=north west][inner sep=0.75pt]    {\footnotesize$n-1$};
% Text Node
\draw (119.33,295.55) node [anchor=north west][inner sep=0.75pt]    {\footnotesize$n-2$};

\end{tikzpicture}
\caption{Illustration of the period-$3$ hexagonal strip model. }
\label{fig:period3-hexagon-cases}
\end{figure}

\begin{theorem}
\label{thm:hexagon-recurrence}
For every $n\geq1$, we have 
\[
H_n=
\begin{cases}
H_{n-1}+H_{n-2}+H_{n-3},
& n\equiv0\pmod3,\\[2mm]
H_{n-1}+2H_{n-2},
& n\equiv1\pmod3,\\[2mm]
H_{n-1}+H_{n-2},
& n\equiv2\pmod3.
\end{cases}
\]
\end{theorem}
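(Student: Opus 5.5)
We prove the recurrence by a last-cell decomposition, in the same spirit as the proof of Theorem~\ref{thm:triangular-period2-recurrence}. The first step is to fix the adjacency structure of the double-hexagon strip from Figure~\ref{fig:period3-hexagon-cases}. We label the cells $1,2,\ldots,n$ in the zigzag order shown there. Reading off the dual graph (the red edges), cell $i$ is adjacent to cell $i-1$ for every $i\geq2$. In addition, cells $3j-1$ and $3j+1$ are adjacent; in the figure these are the vertically stacked pairs $2,3$ and $5,6$. Equivalently, the shared-edge adjacencies to the left of cell $n$ are the following. If $n\equiv1\pmod3$, cell $n$ touches $n-1$ and $n-2$. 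Otherwise, cell $n$ touches only $n-1$ among earlier cells. We also record the extra adjacency between $n-1$ and $n-3$ when $n-1\equiv1\pmod 3$. The precise statement we need is a list, for each residue class, of the neighbours of cell $n$ and of cell $n-1$ among cells with smaller labels.

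Next we decompose $\mathcal{H}_n$ according to the tile covering cell $n$, whose label is maximal. If that tile is a single, removing it gives a tiling of the first $n-1$ cells, which contributes $H_{n-1}$. If it is a double tile $\{n-1,n\}$, removing it gives $H_{n-2}$. If it is a double tile $\{n-2,n\}$ (available when $n\equiv1$), then cell $n-1$ remains, and we must determine how it can be covered. Here the geometry matters. Cell $n-1$, with $n-1\equiv0\pmod3$, is adjacent to $n-2$ and $n$ (both already used) and possibly to later cells only. So it must be a single, and we get $H_{n-3}$; alternatively we may find that the natural count is $H_{n-2}$ by a different pairing. We will match the case analysis to the claimed formulas. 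For $n\equiv1$ the claim is $H_{n-1}+2H_{n-2}$, which indicates that the second partner of cell $n$ removes exactly two cells. So the labelling must make cell $n$ adjacent to $n-1$ and to another cell whose removal, together with $n$, leaves the first $n-2$ cells intact. We will therefore choose the labelling (the top/bottom order within a stacked pair) so that, in each case, removing the covered cells leaves an initial segment $\{1,\ldots,m\}$ that is again a strip of the same shape.

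For $n\equiv0\pmod3$, the formula $H_{n-1}+H_{n-2}+H_{n-3}$ arises in the same way as case~3 of the triangular chain. Cell $n$ pairs with $n-2$, the intermediate cell $n-1$ is forced to be a single, and three cells are removed. For $n\equiv2$, cell $n$ has a single earlier neighbour, which gives the Fibonacci step. The initial values ($H_1=1$, $H_2=2$, $H_3=4$, checked by hand, with $H_0=1$ and negative indices zero) are consistent with these formulas.

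The main obstacle is the first step: pinning down an explicit labelling and adjacency list of the strip that is compatible with the figure and for which every initial segment is again a strip of the same type. Every truncation claim in the decomposition rests on this. I would first write the cell centres in coordinates and compute adjacency directly, then verify the three residue cases, including the forced-single argument, against that list.
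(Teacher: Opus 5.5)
There is a genuine gap, and it sits exactly at what you call the main obstacle. The adjacency list you write down is not the one in Figure~\ref{fig:period3-hexagon-cases}, and the case analysis built on it does not give the stated recurrence.

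In the figure, the cells $3j+1$ lie on the middle row, and the cells $3j+2$, $3j+3$ form a vertically stacked column (top, bottom). Each middle cell touches both cells of the columns on either side of it. So, besides $i\sim i-1$, the extra adjacencies are $\{3j+1,3j+3\}$ \emph{and} $\{3j+2,3j+4\}$, i.e.\ $i\sim i-2$ (for $i\geq3$) exactly when $i\equiv 0$ or $1\pmod 3$.

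Your list keeps only the second family, and misdescribes it as the stacked pairs $2,3$ and $5,6$, which are consecutive labels anyway. It therefore asserts that for $n\equiv 0\pmod 3$ cell $n$ touches only $n-1$. Taken at face value, your list yields $H_{n-1}+H_{n-2}+H_{n-3}$ at $n\equiv1$ and the Fibonacci step at both other residues, which is not the statement. Your later sentence that cell $n$ ``pairs with $n-2$'' when $n\equiv0$ contradicts your own list. The correct justification there is that $n-1$ is the top cell of the last column. Its neighbours are only $n-2$ and $n$ (and the absent $n+1$), so it is forced to be a single once $\{n-2,n\}$ is placed.

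The $n\equiv1\pmod 3$ case is where your argument actually breaks. Here $n-1\equiv 0$ is the bottom cell of the last column, and it is adjacent to $n-3$, the preceding middle cell. So after placing $\{n-2,n\}$ it is \emph{not} forced to be a single, and $H_{n-3}$ is the wrong count. What remains is $\{1,\dots,n-3\}\cup\{n-1\}$, which is not an initial segment. No relabelling can make it one: the double tiles $\{n-1,n\}$ and $\{n-2,n\}$ remove different cells, so at most one of them can leave $\{1,\dots,n-2\}$.

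The missing idea, which is what the paper uses, is an isomorphism rather than a truncation. Reflecting the strip in its horizontal axis fixes the middle cells and swaps the two cells of each column. It therefore maps $\{1,\dots,n-3\}\cup\{n-1\}$ onto $\{1,\dots,n-2\}$, so this region has exactly $H_{n-2}$ tilings. Equivalently, in that region cell $n-1$ has the single neighbour $n-3$, giving $H_{n-3}+H_{n-4}=H_{n-2}$ by the $n-2\equiv 2$ case. That is where the second copy of $H_{n-2}$ in $H_{n-1}+2H_{n-2}$ comes from. As written, your proposal works backwards from the target formulas rather than deriving them from the geometry.
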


\begin{proof}
We distinguish the three terminal configurations shown in
Figure~\ref{fig:period3-hexagon-cases}.

Suppose first that $n\equiv0\pmod3$. The last cell $n$ may be covered by a single hexagon, in which case
the remaining strip is counted by $H_{n-1}$. If $n$ belongs to a
double hexagon, there are two possible terminal configurations. Pairing
$n$ with $n-1$ leaves a strip counted by $H_{n-2}$. Alternatively,
$n$ may be paired with $n-2$; in this case $n-1$ is forced to be a
single hexagon, and deleting the resulting terminal configuration
leaves a strip counted by $H_{n-3}$. Hence
\[
H_n=H_{n-1}+H_{n-2}+H_{n-3}.
\]

Now suppose that $n\equiv1\pmod3$. If cell $n$ is covered by a single hexagon, the remaining strip
contributes $H_{n-1}$. Otherwise, cell $n$ can be paired with either
of the two adjacent terminal cells shown in
Figure~\ref{fig:period3-hexagon-cases}. In either case, deleting the
double hexagon leaves a strip isomorphic to the strip on $n-2$ cells.
The two choices are distinct, and therefore
\[
H_n=H_{n-1}+2H_{n-2}.
\]

Finally, suppose that $n\equiv2\pmod3$. The last cell is either covered by a single hexagon, contributing
$H_{n-1}$, or belongs to the unique possible terminal double hexagon,
contributing $H_{n-2}$. Thus
\[
H_n=H_{n-1}+H_{n-2}.\qedhere
\]
\end{proof}

We now compare this geometric recurrence with the period-$3$
RM-Fibonacci recurrence. From~\eqref{eq:A-recurrence},
\begin{equation}
\label{eq:A3-rotating}
A_n^{(3)}
=
\begin{cases}
A_{n-1}^{(3)}
+A_{n-2}^{(3)}
+A_{n-3}^{(3)},
&
n\equiv0\pmod3,
\\[2mm]
A_{n-1}^{(3)}
+A_{n-2}^{(3)}
+A_{n-3}^{(3)}
+A_{n-4}^{(3)},
&
n\equiv1\pmod3,
\\[2mm]
A_{n-1}^{(3)}
+A_{n-2}^{(3)},
&
n\equiv2\pmod3.
\end{cases}
\end{equation}

Although the middle recurrence in
Theorem~\ref{thm:hexagon-recurrence} initially has a different
form, the two systems are equivalent. Indeed, if
$n\equiv1\pmod3$, then $n-2\equiv2\pmod3$, and hence
\[
H_{n-2}=H_{n-3}+H_{n-4}.
\]
Consequently,
\[
H_n
=
H_{n-1}+2H_{n-2}
=
H_{n-1}+H_{n-2}+H_{n-3}+H_{n-4}.
\]

Thus $(H_n)$ satisfies exactly the same rotating recurrence as
$(A_n^{(3)})$.

\begin{corollary}
\label{thm:hexagon-RM}
For every $n\geq0$,
\[
H_n=A_n^{(3)}=R_{n+1}^{(3)}.
\]
Consequently, the shifted period-$3$ RM-Fibonacci numbers enumerate
tilings of the double-hexagon strip by single and double hexagons and
coincide with OEIS sequence \seqnum{A354541}.
\end{corollary}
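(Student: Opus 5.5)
The plan is a strong induction on $n$. We have already shown that $(H_n)$ and $(A_n^{(3)})$ obey the same rotating recurrence~\eqref{eq:A3-rotating}: Theorem~\ref{thm:hexagon-recurrence} gives the recurrence for $(H_n)$, and the case $n\equiv1\pmod3$ is rewritten using the relation $H_{n-2}=H_{n-3}+H_{n-4}$. What remains is to check that the two sequences start the same way and that the recurrences really apply from the first index on. The conventions agree: $H_0=1=A_0^{(3)}$, and $H_m=A_m^{(3)}=0$ for $m<0$.

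\textbf{Small indices.} Here the only delicate point is the conversion step in the case $n\equiv1\pmod3$, since it calls on Theorem~\ref{thm:hexagon-recurrence} at index $n-2$, which requires $n-2\geq1$. For $n=1$, Theorem~\ref{thm:hexagon-recurrence} gives $H_1=H_0+2H_{-1}=1$. The rotating form also gives $A_1^{(3)}=A_0^{(3)}=1$, because every term with a negative index vanishes. So the two agree at $n=1$ even though the conversion step is not formally available there. For $n\geq4$ with $n\equiv1\pmod3$ we have $n-2\geq2$, so the conversion is legitimate. The cases $n\equiv0,2\pmod3$ need no conversion at all. As a sanity check I will compute the first few values directly: $H_2=2$, $H_3=4$, $H_4=8$, $H_5=12$. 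These match $1,1,2,4,8,12$ from the list at the start of this section (the sequence there begins at $A_0^{(3)}=1$, so $H_n$ is its $(n+1)$st entry).

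\textbf{Induction step.} Assume $H_m=A_m^{(3)}$ for all $m<n$. Then $H_n$ and $A_n^{(3)}$ are given by the same linear expression in the previous values, selected by the residue of $n$ modulo $3$. Hence $H_n=A_n^{(3)}$. The identity $A_n^{(3)}=R_{n+1}^{(3)}$ is simply the definition of the shifted sequence. For the OEIS identification I will combine this with the correspondence $R_{n+1}^{(3)}=\seqnum{A354541}(n)$ recorded in Table~\ref{tab:RM-initial-values}.

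\textbf{Main obstacle.} The argument is mostly bookkeeping, and the main obstacle is the boundary case $n=1$ in the residue class $1\pmod3$, where the geometric recurrence $H_n=H_{n-1}+2H_{n-2}$ has to be reconciled with the four-term recurrence. I will handle it by direct evaluation rather than by the general rewriting. One could also worry about whether the terminal configurations in Theorem~\ref{thm:hexagon-recurrence} are valid for very short strips ($n\leq3$), but the direct computation of $H_1,\dots,H_4$ above settles that.
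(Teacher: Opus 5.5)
Your proposal is correct and follows the paper's own route. The paper also rewrites the $n\equiv1\pmod 3$ case of Theorem~\ref{thm:hexagon-recurrence} in the four-term form and then concludes ``by direct identification'' of the recurrences and initial values; your strong induction is exactly that argument made explicit. Your separate treatment of $n=1$ is harmless but not needed: under the convention that negative-index terms vanish, $H_{-1}=H_{-2}+H_{-3}=0$ holds trivially, so the rewriting is already valid at $n=1$.
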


\begin{proof}
By direct identification. 
\end{proof}

The preceding argument proves the equinumeracy by a terminal
decomposition. In fact, the correspondence can be made explicit.

For convenience, write $[i]$ for a single hexagon occupying cell $i$,
and $[i,j]$ for a double hexagon covering the adjacent cells $i$ and
$j$.

\begin{theorem}
\label{thm:period3-bijection}
For every $n\geq0$, there is a bijection
\[
\Phi_n:
\mathcal{T}^{(3)}_n
\longrightarrow
\mathcal{H}_n.
\]
If an RM tile has length $j$ and right endpoint $s$, its image is given
by
\[
\begin{array}{c|c}
j & \text{hexagonal configuration}\\
\hline
1 & [s],\\[1mm]
2 & [s-1,s],\\[1mm]
3 & [s-1]\cup[s-2,s],\\[1mm]
4 & [s-3,s-1]\cup[s-2,s].
\end{array}
\]
\end{theorem}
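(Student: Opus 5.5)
The plan is to build $\Phi_n$ tile by tile and prove bijectivity by strong induction on $n$. The induction matches the last-tile decomposition of Theorem~\ref{thm:tiling-interpretation} with the terminal-cell decomposition used in the proof of Theorem~\ref{thm:hexagon-recurrence}.

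For $k=3$ we have $L_3(s)=2+((s+1)\bmod 3)$. This gives:
\begin{itemize}
\item $L_3(s)=3$ when $s\equiv0\pmod3$;
\item $L_3(s)=4$ when $s\equiv1\pmod3$;
\item $L_3(s)=2$ when $s\equiv2\pmod3$.
\end{itemize}
So a tile of length $j$ ending at $s$ exists exactly when the table in the statement uses only adjacencies that are present at that terminal geometry. Concretely, I would verify the following from Figure~\ref{fig:period3-hexagon-cases} and the description of the strip.
\begin{itemize}
\item Cells $s-1,s$ are always adjacent.
\item Cells $s-2,s$ are adjacent exactly when $s\equiv0$ or $s\equiv1\pmod3$.
\item When $s\equiv1\pmod3$, cells $s-3,s-1$ are also adjacent. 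Since $s-1\equiv0$, this is the same fact as the previous item applied at $s-1$.
\end{itemize}
These are the adjacencies already invoked implicitly in the proof of Theorem~\ref{thm:hexagon-recurrence}. Checking them carefully is the step I expect to be the main obstacle, since it relies on reading the geometry of the strip rather than on algebra.

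Define $\Phi_n$ by replacing every tile of an RM-tiling $\tau$ with its hexagonal configuration. Each configuration covers exactly the cells $s-j+1,\dots,s$, so the images of consecutive tiles are disjoint and together cover cells $1,\dots,n$. The image is therefore a tiling in $\mathcal{H}_n$, provided each double hexagon used is a genuine adjacent pair; this is exactly what the adjacency checks above guarantee.

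The key observation is that $\Phi_n$ is compatible with truncation. If $\tau$ ends in a tile of length $j$, then $\Phi_n(\tau)$ restricted to cells $1,\dots,n-j$ equals $\Phi_{n-j}(\tau')$, where $\tau'$ is $\tau$ with its last tile removed. This holds because the strip on the first $n-j$ cells is the strip $\mathcal{H}_{n-j}$, as used in the proof of Theorem~\ref{thm:hexagon-recurrence}.

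For bijectivity, assume $\Phi_m$ is a bijection for all $m<n$; the case $n=0$ is trivial. Partition $\mathcal{H}_n$ by the configuration covering the terminal cells, following the refined case analysis.
\begin{itemize}
\item $n\equiv2\pmod3$: either $[n]$ or $[n-1,n]$.
\item $n\equiv0\pmod3$: either $[n]$, $[n-1,n]$, or $[n-2,n]$. In the last case $n-1$ is forced to be single, giving $[n-1]\cup[n-2,n]$.
\item $n\equiv1\pmod3$: either $[n]$, $[n-1,n]$, or $[n-2,n]$. In the last case cell $n-1$ becomes the terminal cell of a strip of residue type $2$, so it is either single, giving $[n-1]\cup[n-2,n]$, or paired with its unique remaining neighbour $n-3$, giving $[n-3,n-1]\cup[n-2,n]$.
\end{itemize}
This refinement is exactly the rewriting $2H_{n-2}=H_{n-2}+H_{n-3}+H_{n-4}$ already carried out after Theorem~\ref{thm:hexagon-recurrence}, now made at the level of configurations.

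In every case the admissible terminal configurations are precisely the images of tiles of lengths $j=1,\dots,L_3(n)$ ending at $n$. Deleting the configuration for length $j$ leaves an arbitrary element of $\mathcal{H}_{n-j}$. The classes are disjoint, because they differ in how cell $n$, or cell $n-1$, is covered, and together they exhaust $\mathcal{H}_n$.

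Combining this with the last-tile decomposition of $\mathcal{T}^{(3)}_n$ and the inductive hypothesis, $\Phi_n$ maps each class $\{\tau:\text{last tile has length } j\}$ bijectively onto the corresponding class of $\mathcal{H}_n$. Hence $\Phi_n$ is a bijection. As a consistency check, this recovers Corollary~\ref{thm:hexagon-RM}.
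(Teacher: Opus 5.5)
Your proposal is correct and follows essentially the same route as the paper. Both arguments justify the local replacement with the same adjacency facts. Both invert it with the same right-to-left terminal analysis: cell $n$ is single, paired with $n-1$, or paired with $n-2$, and in the last case $n-1$ is forced single or, when $n\equiv1\pmod3$, paired with $n-3$. Your strong induction with truncation compatibility is just a more explicit write-up of the paper's step of removing the terminal block and repeating.
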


\begin{proof}
For period $3$, the largest admissible RM tile length at endpoint $s$
is
\[
L_3(s)
=
\begin{cases}
3, & s\equiv0\pmod3,\\
4, & s\equiv1\pmod3,\\
2, & s\equiv2\pmod3.
\end{cases}
\]

With the labeling of Figure~\ref{fig:period3-hexagon-cases},
consecutive cells $s-1$ and $s$ are adjacent. Moreover, $s-2$ and $s$
are adjacent when $s\equiv0$ or $1\pmod3$, and, when
$s\equiv1\pmod3$, the cells $s-3$ and $s-1$ are also adjacent.
Consequently, every configuration in the table is well defined whenever
the corresponding RM tile length is admissible.

Since the RM tiles partition $\{1,\ldots,n\}$ into consecutive blocks,
applying the replacement to each block produces a tiling in
$\mathcal{H}_n$.

Conversely, consider the rightmost cell $s$ of a tiling in
$\mathcal{H}_n$. If $s$ is a single hexagon, it corresponds to an RM
tile of length $1$. If $s$ is paired with $s-1$, it corresponds to a
tile of length $2$. If $s$ is paired with $s-2$, then either $s-1$ is
single, giving a tile of length $3$, or, necessarily when
$s\equiv1\pmod3$, $s-1$ is paired with $s-3$, giving a tile of
length $4$.

These possibilities are mutually exclusive and exhaust the terminal
configurations shown in Figure~\ref{fig:period3-hexagon-cases}.
Removing the corresponding terminal block and repeating the procedure
from right to left recovers a unique RM tiling. Hence $\Phi_n$ is a
bijection.
\end{proof}

Thus the period-$3$ RM-Fibonacci sequence has two genuinely different
tiling interpretations:
\[
\mathcal{T}^{(3)}_n
\;\longleftrightarrow\;
\mathcal{H}_n,
\]
and Theorem~\ref{thm:period3-bijection} explains their equality
directly at the level of the underlying combinatorial objects.

\section{Concluding Remarks and Open Problems}
\label{sec:conclusion}

In this paper, we introduced the RM-Fibonacci numbers as a periodic
variable-order extension of the Fibonacci sequence and developed their
algebraic and combinatorial theory. Although the defining recurrence has
a periodically changing memory length, the resulting sequences exhibit a
remarkably rigid structure, including period collapse, explicit formulas,
rational generating functions, and exact exponential growth. On the
combinatorial side, RM-Fibonacci numbers arise from endpoint-periodic
tilings, restricted compositions, and directed lattice paths, while the
tiling model provides a bijective explanation for the multiplicative
factor accumulated over one complete memory cycle.

The first two nonclassical periods also admit natural geometric
realizations in which the periodicity is induced by the underlying
geometry rather than imposed explicitly. For period $2$, the sequence is
realized by monomer--dimer tilings of a triangular chain with a prescribed
boundary condition. For period $3$, the shifted sequence is realized by
tilings of a double-hexagon strip by single and double hexagons, and an
explicit bijection connects this model with the period-$3$ RM-tilings.
These examples suggest that geometry-driven realizations may exist more
generally.

\begin{problem}
For each $k\geq4$, construct, if possible, a natural periodic planar graph
or strip together with a fixed local tiling or matching rule whose
counting sequence is the period-$k$ RM-Fibonacci sequence. Ideally, the
cycle of recurrence orders $2,3,\ldots,k+1$ should arise from the geometry itself rather than from an explicitly
position-dependent rule.
\end{problem}

Another direction concerns the choice of the periodic memory pattern.
The present paper considers the consecutive cycle of memory lengths $2,3,\ldots,k+1$. More general periodic patterns may behave quite differently, and it is
natural to ask which of them retain the structural simplifications found
here.

\begin{problem}
Classify periodic memory patterns for which the associated variable-order
recurrence eventually reduces to a simple constant-coefficient
recurrence.
\end{problem}

The explicit arithmetic structure of the RM-Fibonacci numbers also raises
natural number-theoretic questions. Their divisibility properties,
greatest common divisors, periodicity modulo an integer, and related
congruence phenomena could be investigated systematically and compared
with their classical Fibonacci counterparts.

A further direction is to apply the rotating-memory principle to other
classical recurrence sequences. Periodic-memory analogues of Lucas, Pell,
Jacobsthal, Tribonacci, and related sequences may exhibit versions of the
same structural phenomena observed here.

\begin{problem}
Develop rotating-memory analogues of classical linear recurrence
sequences and determine which families retain properties such as period
collapse, geometric residue-class subsequences, rational generating
functions, and natural combinatorial interpretations.
\end{problem}

Finally, one may move beyond periodicity. Variable-order recurrences with
aperiodic, automatic, morphic, or random memory rules provide a broader
setting in which the rigid behavior found here is unlikely to persist
unchanged. Understanding the boundary between periodic systems with
strong algebraic structure and genuinely nonperiodic variable-memory
recurrences appears to be a natural direction for further study.

  \bibliographystyle{plainurl} 

 \bibliography{biblio}

 % {\color{red} ToDo: 

 % \begin{itemize}
 %     \item Update according to the new section 6.
 %     \item Ask for a generalization of the monomred-dimer case 2
 %     \item Add the simple worker example ? 
 %     \item mention that the case 2 is registred in the OEIS but this dimer monomer interpration is not mentioned
 % \end{itemize}
 % }

\end{document}